\title{Dynamics of Projectivized Toric Vector Bundles}
\begin{document}
\author{Javier Gonz\'alez-Anaya}
\address{Department of Mathematics and Computer Science, 
Santa Clara University, 
Santa Clara, CA 95053, USA}
\email{jgonzalezanaya@scu.edu}

\author{Brett Nasserden}
\address{Department of Mathematics, 
McMaster University, 
Hamilton, ON L8S 4K1, Canada}
\email{nasserdb@mcmaster.ca}

\author{Sasha Zotine}
\address{Department of Mathematics, 
McMaster University, 
Hamilton, ON L8S 4K1, Canada}
\email{zotinea@mcmaster.ca}

\keywords{vector bundles, arithmetic dynamics, toric geometry}
\subjclass[2020]{37P55, 14J60, 14M25}
\date{\today}

\maketitle
\begin{abstract}
We study surjective endomorphisms of projective bundles over toric varieties, achieving three main results. First, we provide a structural theorem describing endomorphisms of projectivized split bundles over arbitrary base varieties, which we use to classify all surjective endomorphisms of Hirzebruch surfaces and construct novel families of examples. Second, for non-split equivariant bundles over toric varieties, we prove that the dynamical degree of an endomorphism of the projectivization is controlled by the base morphism; as a consequence, we establish the Kawaguchi--Silverman conjecture for such bundles. Third, using an explicit transition function method, we prove that projectivizations of tangent and cotangent bundles of smooth toric varieties admit no non-automorphic surjective endomorphisms commuting with toric morphisms on the base.
\end{abstract}

\section{Introduction}
One of the challenges in holomorphic and arithmetic dynamics is that, while it is easy to write down dominant rational maps $\theta\colon X\dashrightarrow X$ between normal projective varieties, studying their dynamical properties is difficult. For example, already for $X=\PP^2$, computing the first dynamical degree $\lambda_1(\theta)$ is a difficult task, and it is known that it may even be a transcendental number~\cite{belldillerjonssonkrieger2024transcendental}. Similarly, from the arithmetic perspective, the theory of canonical heights for dominant rational maps is not well understood. 

A common way to get around these difficulties is by working with surjective endomorphisms. For such maps, computing the $1$\textsuperscript{st} dynamical degree is a significantly more tractable problem, their theory of canonical heights is much more developed, and powerful results exist to study their dynamics and verify conjectures \cite{MR4070310, MR4574221, MR3742591,2408.00566}. 
Unfortunately, the much better theoretical properties of surjective endomorphisms come with a cost: it is much harder to write down explicit examples of such maps, except in a small number of special cases, namely polynomial maps of projective spaces, equivariant morphisms of toric varieties, and isogenies of abelian varieties. For example, it is unknown if a smooth projective Fano variety of Picard number $1$, other than projective space, can have a non-isomorphic surjective endomorphism \cite[Conjecture 1.1]{MR2918167} and \cite{MR1467127,MR1697369,MR1993759, MR4669541,2302.11921}.

Given a projective variety $X$ equipped with a non-isomorphic surjective endomorphism $\varphi\colon X\ra X$, and a morphism $\psi\colon \PP^r\ra \PP^r$, we obtain the product map $\varphi\times\psi\colon X\times \PP^r\ra X\times \PP^r$. Recognizing that $X\times \PP^r$ is the projective bundle $\PP(\O_{X}^{\oplus r+1})\ra X$, the product map fits into a diagram
\[
    \begin{tikzcd}
    \PP(\O_{X}^{\oplus r+1}) \arrow[r, "\psi"] \arrow[d, "\pi" left] & \PP(\O_{X}^{\oplus r+1})\arrow[d, "\pi"] \\ 
    X \arrow[r, "\varphi"] & X.
    \end{tikzcd}
\]
Then, one may hope to obtain non-isomorphic surjective endomorphisms by replacing $\O_{X}^{\oplus r+1}$ with a rank $r+1$ vector bundle $\E$ on $X$ in the above diagram, as there are no immediate conditions which preclude the existence of such a map. In fact, this general setting is of primary interest. Indeed, Satriano and Lesieutre~\cite[Lemma~6.2]{lesieutresatriano2021ksc} have shown that if $\psi\colon \PP(\E)\ra\PP(\E)$ is surjective, then there is a surjective map $\varphi\colon X\ra X$ such that $\pi\circ \psi^m=\varphi\circ\pi$ for $m\gg 1$. 

Additionally, the classification of smooth projective surfaces admitting a non-isomorphic surjection given in \cite{MR2154100} suggests that projective bundles play a large role when classifying more general projective varieties admitting a non-isomorphic surjective endomorphism. Moreover, the work of Amerik~\cite{Amerik1} and Amerik--Kuznetsova~\cite{Amerik2} suggests that this is a tractable problem. Finally, there are approaches to holomorphic dynamics involving the minimal model program, where projective bundles arise naturally as Mori-fibre spaces. From this perspective, understanding surjective endomorphisms of projective bundles is a crucial step, since they represent a terminal step of the minimal model program.

Our first result concerns such endomorphisms when $\E$ is a direct sum of line bundles. 

\begin{theorem}\label{thm:mainLineBundle}
    Let $X$ be projective variety and $\varphi\colon X\ra X$ be a surjective endomorphism. Consider line bundles $\L_i$ on $X$ for each $0\leq i\leq r$, and define $\E=\bigoplus_{i=0}^r\L_i$. Let $\pi\colon \PP(\E)\ra X$ denote the natural projection. For each partition $\lambda:=(\lambda_0,\ldots, \lambda_r)$ of $d$, define $\L^{\lambda}:=\bigotimes_{i=0}^r\L_{i}^{\otimes\lambda_i}$. Then, the two following pieces of data are equivalent:
    \begin{enumerate}
    \item A surjective endomorphism $\psi\colon \PP(\E)\ra \PP(\E)$ such that $\varphi\circ\pi=\pi\circ \psi$ and $\psi^*\O_{\PP(\E)}(1)\cong \O_{\PP(\E)}(d)$ for some positive integer $d$.
    \item A collection of $r+1$ polynomials
    \[
        F_i(x,\mathbf{z})=\sum_{\vert \lambda\vert=d}s_{i,\lambda}(x)\mathbf{z}^{\lambda},
    \]   
    such that for each point $p\in X$, the evaluations $F_i(p,\mathbf{z})=\sum_{\vert \lambda\vert=d}s_{i,\lambda}(p)\mathbf{z}^{\lambda}$ define an endomorphism of $\PP^{r}_{\mathbf{z}}$. Here $s_{i,\lambda}\in H^0(X,\L^{\lambda}\otimes \varphi^*\L_i^{-1})$ and $\mathbf{z}^{\lambda}$ is the product $\prod_{i=0}^rz_i^{\lambda_i}$ of the variables $z_0,\ldots,z_r$.
    \end{enumerate}
    The relation between both of the previous perspectives is that $\psi$ restricted to a fibre $\pi^{-1}(p)$ is given by $[F_0(p,\mathbf{z}),\ldots, F_r(p,\mathbf{z})]$.
\end{theorem}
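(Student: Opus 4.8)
The plan is to strip the statement down to a universal-property computation by means of two standard reductions — base change for projective bundles, and the functor of points of $\PP(-)$ — after which assertion (2) becomes just an explicit description of $\pi_*\O_{\PP(\E)}(d)$ for a split $\E$. Throughout I use the convention for which $\pi_*\O_{\PP(\E)}(1)\cong\E$.

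\emph{From "over $\varphi$" to a surjection onto $\O_{\PP(\E)}(d)$.} Because the formation of $\PP(-)$ commutes with base change, there is a canonical isomorphism of $X$-schemes $\PP(\varphi^*\E)\cong\PP(\E)\times_{X,\varphi}X$ under which the first projection $q\colon\PP(\varphi^*\E)\ra\PP(\E)$ satisfies $q^*\O_{\PP(\E)}(1)\cong\O_{\PP(\varphi^*\E)}(1)$. By the universal property of the fibre product, a morphism $\psi\colon\PP(\E)\ra\PP(\E)$ with $\pi\circ\psi=\varphi\circ\pi$ is the same datum as an $X$-morphism $\bar\psi\colon\PP(\E)\ra\PP(\varphi^*\E)$ with $\psi=q\circ\bar\psi$, and then $\psi^*\O_{\PP(\E)}(1)\cong\bar\psi^*\O_{\PP(\varphi^*\E)}(1)$. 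By the universal property of the projective bundle $\PP(\varphi^*\E)\ra X$, giving such a $\bar\psi$ is the same as giving a surjection of $\O_{\PP(\E)}$-modules $\pi^*\varphi^*\E\twoheadrightarrow\M$ onto a line bundle $\M$, with $\bar\psi^*\O_{\PP(\varphi^*\E)}(1)\cong\M$. Chaining these identifications, the requirement $\psi^*\O_{\PP(\E)}(1)\cong\O_{\PP(\E)}(d)$ becomes exactly $\M\cong\O_{\PP(\E)}(d)$, so the data of (1) is the same as a surjection $\pi^*\varphi^*\E\twoheadrightarrow\O_{\PP(\E)}(d)$, the surjectivity of $\psi$ being automatic (it maps $\pi^{-1}(p)$ onto $\pi^{-1}(\varphi(p))$ by a degree-$d$, hence finite and surjective, self-map of $\PP^r$, and $\varphi$ is surjective).

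\emph{Decomposing along the splitting.} Using $\varphi^*\E=\bigoplus_{i=0}^r\varphi^*\L_i$, a map $\pi^*\varphi^*\E\ra\O_{\PP(\E)}(d)$ is the same as an $(r+1)$-tuple of sections $w_i\in H^0(\PP(\E),\O_{\PP(\E)}(d)\otimes\pi^*\varphi^*\L_i^{-1})$. By the projection formula and the computation $\pi_*\O_{\PP(\E)}(d)\cong\Sym^d\E\cong\bigoplus_{|\lambda|=d}\L^\lambda$ valid for a split bundle, this cohomology group equals $\bigoplus_{|\lambda|=d}H^0(X,\L^\lambda\otimes\varphi^*\L_i^{-1})$, and in these terms $w_i$ is written uniquely as $\sum_{|\lambda|=d}s_{i,\lambda}\,\mathbf{z}^\lambda$ with $s_{i,\lambda}\in H^0(X,\L^\lambda\otimes\varphi^*\L_i^{-1})$, where $z_i$ is the tautological section of $\O_{\PP(\E)}(1)\otimes\pi^*\L_i^{-1}$ determined by the $i$-th summand of $\E$ and $\mathbf{z}^\lambda=\prod_j z_j^{\lambda_j}$. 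This is precisely the tuple $(F_0,\dots,F_r)$ of (2), and conversely any such tuple reassembles into a map $\pi^*\varphi^*\E\ra\O_{\PP(\E)}(d)$. To match the two positivity conditions, note that surjectivity of this morphism of sheaves may be tested fibrewise over $X$; restricting to $\pi^{-1}(p)\cong\PP^r$, where $\pi^*\varphi^*\L_i$ is trivial and $\O_{\PP(\E)}(d)$ restricts to $\O_{\PP^r}(d)$, this is exactly the statement that the degree-$d$ forms $F_i(p,\mathbf{z})$ have no common zero, i.e.\ that they define an endomorphism of $\PP^r_{\mathbf z}$. Finally, tracing the reductions above over the point $p$, and using that $q$ restricts on fibres to the canonical identification $\PP(\varphi^*\E)_p=\PP(\E_{\varphi(p)})$, one sees that $\psi|_{\pi^{-1}(p)}$ is $[F_0(p,\mathbf{z}):\cdots:F_r(p,\mathbf{z})]$ after trivializing the relevant line bundles near $p$ and $\varphi(p)$; this is the last sentence of the theorem and closes the equivalence in both directions.

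I expect the genuine work to be the bookkeeping in the second reduction: fixing compatible conventions for $\PP(\E)$, $\O_{\PP(\E)}(1)$ and the tautological sections $z_i$, verifying that $\pi_*\O_{\PP(\E)}(d)\cong\Sym^d\E$ with the monomials $\mathbf{z}^\lambda$ as a fibrewise basis (which is exactly where splitness is used), and checking that the line-bundle twists $\L^\lambda\otimes\varphi^*\L_i^{-1}$ come out as stated rather than with a $\varphi^*$ or an inverse misplaced. The remaining ingredients — base change for $\PP(-)$, the two universal properties, the fibrewise criterion for surjectivity of a morphism of coherent sheaves, and the fact that a base-point-free tuple of $r+1$ forms of degree $d\geq1$ defines a finite surjective self-map of $\PP^r$ — are all standard.
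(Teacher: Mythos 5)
Your proposal is correct, and it reaches the statement by a route that differs in a meaningful way from the paper's. The paper's proof runs through its transition function method: Proposition~\ref{prop:transition function method} establishes (via \cite[Proposition~7.12]{hartshorne} and the pushforward--pullback adjunction) that the diagram is equivalent to a morphism $\varphi^*\E \ra \Sym^d\E\otimes\B$ whose fibrewise evaluations have no common zero, and then Section~\ref{sec:split-case} specializes to split $\E$ by choosing a trivializing cover, writing the compatibility diagrams with the diagonal transition matrices $\operatorname{diag}(M^{(1)}_{ji},\ldots,M^{(r)}_{ji})$, and observing that the local coefficients $a_{i\ell,\lambda}$ satisfy exactly the cocycle condition to glue into global sections (Theorem~\ref{thm:linebundlecase}, stated there under the extra hypothesis $\varphi^*\L_k\cong\L_k^{\otimes q}$). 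You instead stay global throughout: base change and the universal property of $\PP(-)$ convert datum (1) into a surjection $\pi^*\varphi^*\E\twoheadrightarrow\O_{\PP(\E)}(d)$, and then the projection formula together with $\Sym^d\bigl(\bigoplus_i\L_i\bigr)\cong\bigoplus_{|\lambda|=d}\L^\lambda$ identifies the $i$-th component directly with $\bigoplus_{|\lambda|=d}H^0(X,\L^\lambda\otimes\varphi^*\L_i^{-1})$, the fibrewise (Nakayama) criterion giving the no-common-zero condition. Your route skips the cover-and-cocycle bookkeeping entirely and delivers the theorem exactly as stated in the introduction, with twists by $\varphi^*\L_i^{-1}$ and no assumption of the form $\varphi^*\L_i\cong\L_i^{\otimes q}$; the trade-off is that the paper's local transition-function formulation is precisely the tool it reuses for the non-split tangent and cotangent bundles later, where no global $\Sym^d$ decomposition into line bundles is available. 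The only points worth being careful about in your write-up are standard: the universal-property correspondence is between morphisms and line-bundle quotients up to isomorphism of the quotient (so the tuple $(F_0,\ldots,F_r)$ determines $\psi$, and $\psi$ determines the tuple up to a common nonzero scalar, which is the same level of precision as the paper's statement), and the surjectivity of $\psi$ in direction (2)$\Rightarrow$(1) uses $d\geq 1$ so that the fibrewise maps are finite surjective self-maps of $\PP^r$, as you note.
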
 

The classification in Theorem~\ref{thm:mainLineBundle} is a consequence of the fact that we can easily compute the transition functions of a direct sum of line bundles. The utility of this approach was first used by the second and third-named authors in~\cite{NasserdenZotine2023}. As an application, we explicitly classify all surjective morphisms of Hirzebruch surfaces.

\begin{corollary}
Consider the Hirzebruch surface $\mathbb{F}_n=\PP(\O_{\PP^1}\oplus \O_{\PP^1}(n))$ where $n\geq 1$.

\begin{enumerate}
\item Let $\psi\colon \mathbb{F}_n\ra \mathbb{F}_n$ be a surjective endomorphism. Then, there is some surjective morphism $\varphi\colon \PP^1\ra \PP^1$ such that $\pi\circ \psi=\varphi\circ \pi$.

\item Let $\varphi\colon \PP^1\ra \PP^1$ be any surjective morphism on $\PP^1$. Then, morphisms $\psi$ making the following diagram commute 
\[
    \xymatrix{\mathbb{F}_n\ar[d]_\pi\ar[r]^\psi & \mathbb{F}_n\ar[d]^\pi\\ \PP^1\ar[r]_{\varphi} & \PP^1}
\]
are given by pairs
\[
    F_1([x:y],z_1,z_2)=\sum_{i=0}^ds_i([x:y])z_1^{d-i}z_2^i,\quad F_2([x:y],z_1,z_2)=cz_2^d,
\]
where $c, s_0$ are non-zero constants, and $s_i\in \O_{\PP^1}(ni)$ for all $i=1,\dots, d$. Concretely, for a point in $(p,[z_1:z_2])\in\mathbb{F}_n$, the map $\psi$ is given by
\[
    (p,[z_1:z_2])\mapsto \left(\varphi(p), \left[\sum_{i=0}^ds_i(p)z_1^{d-i}z_2^i:cz_2^d\right]\right).
\] 
\end{enumerate}
\end{corollary}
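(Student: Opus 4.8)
The plan is to settle part~(1) by positivity on the surface $\mathbb{F}_n$, and then part~(2) by computing the numerical invariants of $\psi$ and feeding them into Theorem~\ref{thm:mainLineBundle}.

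For part~(1): a surjective endomorphism $\psi$ of $\mathbb{F}_n$ is dominant and generically finite, so $\psi^{*}$ multiplies the intersection form by $\deg\psi$ and sends nef classes to nef classes. The fibre class $f$ is nef with $f^{2}=0$; since $n\geq 1$ the nef cone of $\mathbb{F}_n$ is spanned by $f$ and the positive section, and the only square-zero classes in it are the nonnegative multiples of $f$, so $\psi^{*}f=ef$ with $e\geq 1$. Hence $(\pi\circ\psi)^{*}\O_{\PP^1}(1)=\psi^{*}\O_{\mathbb{F}_n}(f)=\O_{\mathbb{F}_n}(ef)=\pi^{*}\O_{\PP^1}(e)$, all of whose sections are pulled back along $\pi$; since $\pi\circ\psi$ is a surjective morphism to $\PP^1$, it is defined by a base-point-free two-dimensional subspace of $H^{0}(\PP^1,\O_{\PP^1}(e))$ pulled back along $\pi$, so it factors as $\varphi\circ\pi$ for some surjective $\varphi\colon\PP^1\to\PP^1$. (Equivalently: the preimage of any fibre lies in $|ef|=\pi^{*}|\O_{\PP^1}(e)|$ and hence is a union of fibres, so $\psi$ carries fibres to fibres.) This gives part~(1), with $e=\deg\varphi$.

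For part~(2): fix $\varphi$ of degree $e$ and a compatible $\psi$, and let $d$ be the (constant) degree of $\psi$ on fibres. First I would determine $\psi^{*}\O_{\mathbb{F}_n}(1)$. It has fibre-degree $d$, so $\psi^{*}\O_{\mathbb{F}_n}(1)\cong\O_{\mathbb{F}_n}(d)\otimes\pi^{*}\O_{\PP^1}(k)$ for some $k\in\mathbb{Z}$. The class of $\O_{\mathbb{F}_n}(1)$ is the positive section $\sigma$, which is nef but not ample since $\sigma\cdot C_{0}=0$ for the unique negative curve $C_{0}$ of $\mathbb{F}_n$; the pullback of a nef non-ample class by a surjective morphism is again nef and non-ample, whereas $\O_{\mathbb{F}_n}(d)\otimes\pi^{*}\O_{\PP^1}(k)$ is ample exactly when $k>0$, so $k=0$ and $\psi^{*}\O_{\mathbb{F}_n}(1)\cong\O_{\mathbb{F}_n}(d)$, i.e.\ $\psi^{*}\sigma=d\sigma$. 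Comparing self-intersections and using $\deg\psi=de$ gives $d^{2}n=(\psi^{*}\sigma)^{2}=(\deg\psi)\sigma^{2}=(de)n$, so $d=e=\deg\varphi$ (this is where $n\geq 1$ is essential). Now Theorem~\ref{thm:mainLineBundle} applies to $\mathbb{F}_n=\PP(\L_{0}\oplus\L_{1})$ with $\L_{0}=\O_{\PP^1}$ (fibre coordinate $z_{1}$) and $\L_{1}=\O_{\PP^1}(n)$ (fibre coordinate $z_{2}$). Writing a partition of $d$ as $(d-i,i)$, one has $\L^{(d-i,i)}=\O_{\PP^1}(ni)$, so the $z_{1}^{d-i}z_{2}^{i}$-coefficient of $F_{1}$ lies in $H^{0}(\PP^1,\O_{\PP^1}(ni))$ — in particular the $z_{1}^{d}$-coefficient $s_{0}$ is a constant — while the $z_{1}^{d-i}z_{2}^{i}$-coefficient of $F_{2}$ lies in $H^{0}(\PP^1,\O_{\PP^1}(ni)\otimes\varphi^{*}\O_{\PP^1}(-n))=H^{0}(\PP^1,\O_{\PP^1}(n(i-d)))$, which vanishes for $i<d$; thus $F_{2}=cz_{2}^{d}$ with $c$ a constant. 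Finally, the requirement that $[F_{1}(p,\mathbf{z}):F_{2}(p,\mathbf{z})]$ define a morphism $\PP^1\to\PP^1$ for every $p$ forces $c\neq 0$ (otherwise $F_{2}\equiv 0$) and then, evaluating at $[z_{1}:z_{2}]=[1:0]$, also $s_{0}\neq 0$. Conversely, given $d=\deg\varphi$, nonzero constants $c,s_{0}$ and sections $s_{i}\in H^{0}(\PP^1,\O_{\PP^1}(ni))$ for $1\leq i\leq d$, the forms $F_{1}=\sum_{i=0}^{d}s_{i}z_{1}^{d-i}z_{2}^{i}$ and $F_{2}=cz_{2}^{d}$ have all coefficients in the cohomology groups prescribed by Theorem~\ref{thm:mainLineBundle} and define a fibrewise endomorphism (since $F_{2}$ vanishes only where $z_{2}=0$, and there $F_{1}=s_{0}z_{1}^{d}\neq 0$); so Theorem~\ref{thm:mainLineBundle} produces the corresponding $\psi$, whose fibrewise description is exactly the last displayed formula in the statement.

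I expect the main obstacle to be the Picard-theoretic step in part~(2): proving $\psi^{*}\O_{\mathbb{F}_n}(1)\cong\O_{\mathbb{F}_n}(d)$ and then $d=\deg\varphi$. Once that is in place, Theorem~\ref{thm:mainLineBundle} turns the problem into the cohomology bookkeeping above and the ``no common zero'' condition is immediate. In short, the crux throughout is controlling the action of $\psi^{*}$ on $N^{1}(\mathbb{F}_n)$, using that for $n\geq 1$ the surface $\mathbb{F}_n$ carries a unique curve of negative self-intersection and that $\O_{\mathbb{F}_n}(1)$ is nef but not ample.
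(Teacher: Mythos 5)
Your argument is correct, and it reaches the same two milestones as the paper (a positivity argument to show every surjective endomorphism is based, then the split-bundle classification to get the explicit form), but by somewhat different routes at the key steps. For part (1), the paper's Lemma~\ref{lem:HirzebruchIsBased} first shows $\psi^*$ acts on $N^1(\mathbb{F}_n)$ as a scalar (via the eigenvector argument of Lemma~\ref{lem:pullback-is-constant} applied to the nef and pseudoeffective cones), then computes $\psi_*F$ to see that fibres map to fibres, and finally builds $\varphi$ as $\pi\circ\psi\circ s$ for a section $s$ of $\pi$; you instead note that $\psi^*f$ is a square-zero nef class, hence a multiple $ef$ of the fibre class, and then descend the two generating sections of $(\pi\circ\psi)^*\O_{\PP^1}(1)=\pi^*\O_{\PP^1}(e)$ through $\pi_*\O_{\mathbb{F}_n}=\O_{\PP^1}$ to factor $\pi\circ\psi=\varphi\circ\pi$ directly --- a more elementary argument tailored to the surface, whereas the paper's lemma is built to serve the general projective-bundle setting. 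For part (2), the paper simply imports from its lemma that $\psi^*$ is multiplication by $d=\deg\varphi$ and then feeds $q=d$ into Theorem~\ref{thm:linebundlecase}; you instead pin down the twist explicitly ($\psi^*\O_{\mathbb{F}_n}(1)\cong\O_{\mathbb{F}_n}(d)$ because the tautological class is nef but not ample, so $k=0$) and then identify $d=\deg\varphi$ by comparing $(\psi^*\sigma)^2=d^2n$ with $(\deg\psi)\,\sigma^2=de\,n$, which is where $n\geq 1$ enters. This is a nice touch: it makes explicit the triviality of the line bundle $\B$ in $\psi^*\O(1)\cong\O(d)\otimes\pi^*\B$, a point the paper handles only implicitly through its scalar-action lemma. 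The remaining cohomology bookkeeping ($s_i\in H^0(\O_{\PP^1}(ni))$, $F_2=cz_2^d$, and the non-vanishing of $c$ and $s_0$ forced by the no-common-zero condition at $[1:0]$) matches the paper's computation exactly.
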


Since Hirzebruch surfaces are toric surfaces, it is well known that they admit non-isomorphic equivariant surjections. The previous corollary describes all non-equivariant morphisms explicitly as well. Much like projective space, we see that the majority of the morphisms are non-equivariant, that is, non-monomial. Moreover, the explicit representation of these maps shows that every such map preserves the divisor $\{z_2=0\}$, and when thinking of these maps as a family of morphisms parametrized by $\mathbb{P}^1$, every morphism in the family is totally ramified at $\infty=[1:0]$.

Moving beyond line bundles, Theorem~\ref{thm:mainLineBundle} and the Hirzebruch surface examples show that to study surjective endomorphisms of $\mathbb{P}(\mathcal{E})$ we must understand the finer geometry of the bundle $\mathbb{P}(\mathcal{E})$ explicitly. Two natural families of bundles where this is possible are vector bundles on curves and equivariant vector bundles on toric varieties. The case of vector bundles on elliptic curves was addressed in~\cite{NasserdenZotine2023}; here we study the latter case.

Let $X$ be a smooth projective toric variety and $\mathcal{E}$ an equivariant vector bundle on $X$. It is well-known \cite[Proposition A.2]{MR3491049} that $\mathbb{P}(\mathcal{E})$ is a toric variety if and only if $\mathcal{E}$ is a direct sum of line bundles. Therefore, for general equivariant bundles, $\mathbb{P}(\mathcal{E})$ is not toric. Nevertheless, we may exploit the torus-equivariant structure of $\mathcal{E}$ to deduce properties of any surjective map $\psi\colon \mathbb{P}(\mathcal{E})\ra \mathbb{P}(\mathcal{E})$. 

Consider the commutative diagram
\begin{equation}
\label{eq:dagger0}
    \begin{tikzcd}
    \mathbb{P}(\mathcal{E}) \arrow[r, "\psi"] \arrow[d, "\pi" left] & \mathbb{P}(\mathcal{E})\arrow[d, "\pi"] \\ 
    X \arrow[r, "\varphi"] & X,
    \end{tikzcd}
\tag{$\dagger$}
\end{equation}
where $\psi$ and $\varphi$ are surjective. While the dynamical degrees $\lambda_1(\psi)$ and $\lambda_1(\varphi)$ govern the dynamical complexity of the maps, it is important to consider the dynamical complexity of the restriction of $\psi$ to the fibres. This leads to the notion of the \emph{relative dynamical degree} $\lambda_1(\psi\vert_\pi)$. The relation between these three dynamical degrees is given by $\lambda_1(\psi)=\max(\lambda_1(\varphi),\lambda_1(\psi\vert_\pi))$. Let $F$ denote a general fibre of $\pi$. Then, the relative dynamical degree is simply the degree of the polynomials defining the map of projective spaces given by $\psi\vert_F\colon F\ra \varphi(F)$, which does not depend on the chosen fibre. For convenience we call this the \emph{relative degree} of $\psi$. See Subsection~\ref{subsec:on-dynamics} for more details. 

Our second result relates these degrees for equivariant bundles over toric varieties.

\begin{theorem}\label{thm:mainSoftToric}
    Let $X$ be a smooth projective toric variety, and let $\mathcal{E}$ be an equivariant bundle on $X$. Assume that $\mathcal{E}$ is not isomorphic to $\mathcal{L}^{\oplus r}$ for any line bundle $\mathcal{L}$ on $X$. If $\psi\colon \mathbb{P}(\mathcal{E})\ra \mathbb{P}(\mathcal{E})$ is surjective and $\varphi\colon X\ra X$ is an equivariant surjective morphism making diagram \eqref{eq:dagger0} commute, then $\lambda_1(\varphi)=\lambda_1(\psi)$. In other words, the morphisms on the fibres are given by homogeneous polynomials of degree $\lambda_1(\varphi)$.     
\end{theorem}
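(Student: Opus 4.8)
The plan is to translate the assertion into a numerical inequality and then force it out of positivity of $\O_{\PP(\E)}(1)$ together with the equivariant structure of $\E$. By the product formula for relative dynamical degrees, $\lambda_1(\psi)=\max\{\lambda_1(\varphi),\lambda_1(\psi|_\pi)\}$, and $\lambda_1(\psi|_\pi)$ is the relative degree $d$, i.e.\ the unique positive integer with $\psi^*\O_{\PP(\E)}(1)\cong\O_{\PP(\E)}(d)\otimes\pi^*M$ for some $M\in\Pic(X)$. Hence the theorem is equivalent to $d=\lambda_1(\varphi)$, and I will concentrate on the inequality $d\le\lambda_1(\varphi)$ (which already yields $\lambda_1(\psi)=\lambda_1(\varphi)$), arguing by contradiction under the assumption $d>\lambda_1(\varphi)$.

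First I would set up two parallel descriptions of $\psi$. On the sheaf side, writing $f=\varphi\circ\pi$, the morphism $\psi$ over $\varphi$ is the same datum as a surjection $f^*\E\twoheadrightarrow\O_{\PP(\E)}(d)\otimes\pi^*M$, equivalently — after pushing forward along the $d$-th relative Veronese — the same as a subbundle inclusion $\beta\colon\varphi^*\E\otimes M^\vee\hookrightarrow\Sym^d\E$ which is fibrewise base-point-free: on each $\pi$-fibre the induced $(r{+}1)$-dimensional linear subsystem of $|\O(d)|$ has no base points and cuts out the degree-$d$ self-map of $\PP^r$ that $\psi$ induces there. On the numerical side, $\psi^*$ acts on $N^1(\PP(\E))=\pi^*N^1(X)\oplus\bR\xi$ (with $\xi=c_1\O_{\PP(\E)}(1)$) by a block-upper-triangular matrix with diagonal blocks $\varphi^*$ and $(d)$, and it preserves the nef cone. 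Because $d>\lambda_1(\varphi)$, the number $d$ is not an eigenvalue of $\varphi^*$, so the $d$-eigenspace of $\psi^*$ is the line through $\eta:=\xi+\pi^*w$ with $w=(dI-\varphi^*)^{-1}[M]\in N^1(X)_\bR$; since $d=\lambda_1(\psi)$ is the spectral radius and $\psi^*$ preserves the full pointed nef cone, the Birkhoff--Perron--Frobenius theorem forces $\eta$ to be nef (the sign is pinned by $\eta\cdot(\text{fibre line})=1>0$). Working from now on with the $\bR$-twisted bundle $\G=\E\langle w\rangle$ — harmless, as $\PP(\E)$ and $\psi$ are unchanged and $\O_{\PP(\G)}(1)=\eta$ — I may assume $M=\O_X$, that $\G$ is nef, $\eta$ is nef, and $\beta\colon\varphi^*\G\hookrightarrow\Sym^d\G$ is fibrewise base-point-free.

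The hypothesis $\E\not\cong\L^{\oplus r}$ enters exactly here: the class $u:=\pi_*(\eta^{\,r+1})=c_1(\G)$ is nef, being $c_1$ of the nef bundle $\G$, and it is nonzero — for $u=0$ would make $\G$ numerically flat, hence trivial on the rationally connected variety $X$, i.e.\ $\E\cong\L^{\oplus r}$. With $u\ne0$ I would run an intersection-theoretic comparison. Let $\nu$ be the numerical dimension of $\eta$, so $r{+}1\le\nu\le n{+}r$ (using $\pi_*\eta^{\,r+1}=u\ne0$), and set $\sigma:=\pi_*(\eta^{\,\nu})$, a nonzero class of codimension $\nu-r$ on $X$ (a twisted Segre class of $\G$). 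Pushing the top intersection number $\eta^{\,\nu}\cdot\pi^*\gamma^{\,n+r-\nu}$ through $\psi$ and using $\psi^*\eta=d\eta$, $\deg\psi=d^{\,r}\deg\varphi$, one gets that the homogeneous polynomial $Q(\gamma)=\int_X\sigma\cdot\gamma^{\,n+r-\nu}$ — which is positive on ample classes, by definition of the numerical dimension — satisfies $Q\circ\varphi^*=(\deg\varphi/d^{\,\nu-r})\,Q$; hence $\deg\varphi/d^{\,\nu-r}$ is a product of $n{+}r{-}\nu$ eigenvalues of $\varphi^*$. When $\eta$ is big ($\nu=n{+}r$) this reads $d^{\,n}=\deg\varphi$, and log-concavity of the dynamical degrees of $\varphi$ gives $\deg\varphi=\lambda_n(\varphi)\le\lambda_1(\varphi)^n$, so $d\le\lambda_1(\varphi)$ — contradicting $d>\lambda_1(\varphi)$. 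For the opposite inequality $d\ge\lambda_1(\varphi)$ (needed for the "in other words" clause) one either runs the comparison against the Perron nef class of $\varphi^*$ without the twist, or — in the split case — reads it off Theorem~\ref{thm:mainLineBundle}: positivity of the coefficient bundles $\L^{\lambda}\otimes\varphi^*\L_i^{-1}$ forces each $F_i$ to be nonconstant only once $d\ge\lambda_1(\varphi)$, with the extremal coefficient living in a degree-zero bundle precisely when $d=\lambda_1(\varphi)$.

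The step I expect to be the real obstacle is the remaining case: $\eta$ nef but not big. There $\eta$ is still $\pi$-positive, so its numerical (nef) reduction $g\colon\PP(\G)\dashrightarrow Z$ has $\dim Z=\nu<n{+}r$, dominates $X$, and is contracted by no $\pi$-fibre; $\psi^*\eta=d\eta$ makes $\psi$ descend, after passing to a power, to a dominant self-map $\bar\psi$ of $Z$ whose "relative degree over $X$" is still $d$, and one must organize a clean induction on $\dim X$ whose base case is the split bundles treated by Theorem~\ref{thm:mainLineBundle}. Two subtleties to keep in mind throughout: "$\E\not\cong\L^{\oplus r}$" is strictly weaker than "$\E$ is non-split", so split bundles with non-isomorphic summands must be covered; and equivariance of $\varphi$ is what makes $\varphi^*$ on $N^1(X)$, and therefore the eigenvalue $\deg\varphi/d^{\,\nu-r}$, explicitly computable from the fan — which is what ultimately pins $d$ to $\lambda_1(\varphi)$ rather than to some other product of eigenvalues.
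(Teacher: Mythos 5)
Your reduction to the single inequality $d\le\lambda_1(\varphi)$ (with $d$ the relative degree) is the right target, and your ``big case'' is essentially correct: if the twisted eigenclass $\eta$ is nef and big, then $\deg\psi=d^{\dim\PP(\E)}$ together with $\deg\psi=\deg\varphi\cdot d^{\,\dim\text{fibre}}$ gives $\deg\varphi=d^{\dim X}$, and log-concavity of dynamical degrees yields $d\le\lambda_1(\varphi)$. But the proof as proposed has a genuine gap, and you flag it yourself: the case where $\eta$ is nef but not big is not proved. The sketch of a ``nef reduction'' $g\colon\PP(\G)\dashrightarrow Z$ to which $\psi$ descends, with the relative degree over $X$ ``still $d$'', and an induction on $\dim X$, is not an argument: the nef reduction is only an almost-holomorphic rational map, it is not clear that $\psi$ descends to a morphism of a projective bundle over a toric base (so the inductive hypothesis need not apply to $\bar\psi$), and no reason is given why the fibre degree is preserved. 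Yet this non-big case is exactly where the hypotheses $\E\not\cong\L^{\oplus r}$ and equivariance of $\varphi$ must do real work --- in the big case you never used either --- so the heart of the theorem is missing. (A secondary caveat: the step ``$c_1(\G)=0$ and $\G$ nef $\Rightarrow$ $\E\cong\L^{\oplus r}$'' invokes numerical flatness of an $\bR$-twisted bundle and Demailly--Peternell--Schneider-type results on a rationally connected base; this can be made to work, but it is far from the elementary statement you treat it as.)

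For contrast, the paper avoids the global cone analysis entirely. Since $\varphi$ is toric, every torus-invariant curve is periodic, and a toric rigidity statement (Proposition~3.9: a toric bundle that is projectively trivial on \emph{every} invariant curve is projectively trivial) produces, from $\E\not\cong\L^{\oplus r}$, a single invariant rational curve $C$ with $\E\vert_C$ not projectively trivial. Restricting diagram \eqref{eq:dagger0} to $C\cong\PP^1$, one is in the split situation $\bigoplus\O_{\PP^1}(d_i)$ with not all $d_i$ equal, where $N^1$ is two-dimensional: $\O_{\PP(\E\vert_C)}(1)$ is nef and big but not ample while $\pi^*\O_{\PP^1}(1)$ spans a boundary ray of both the nef and pseudoeffective cones, so preservation of these cones forces $\psi^*$ to be scalar multiplication, pinning the relative degree to $\deg(\varphi\vert_C)$; a short arithmetic-degree argument (Lemma~\ref{lem:nonjumping}, using the Kawaguchi--Silverman conjecture on $\PP^1$ and $\alpha\le\lambda_1$) then gives $\deg(\varphi\vert_C)=\lambda_1(\varphi\vert_C)\le\lambda_1(\varphi)$, and the product formula concludes. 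In short, the paper localizes the positivity argument to one invariant curve where bigness is automatic, which is precisely how it sidesteps the non-big case that your global approach leaves open; if you want to salvage your route, that is the case you must close.
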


The restriction on $\mathcal{E}$ excludes the trivial case $\mathbb{P}(\mathcal{E})\cong X\times \mathbb{P}^{\textnormal{rank}(\mathcal{E})-1}$, while the condition that $\pi\circ \psi=\varphi\circ \pi$ with $\varphi$ equivariant provides a weak form of equivariance for $\psi$.

A typical application is when $\varphi\colon X\ra X$ is the toric morphism obtained by extending the map $(t_1,\ldots,t_n)\mapsto (t_1^d,\ldots,t_n^d)$ on the torus of $X$ to all of $X$. In this case, $\lambda_1(\varphi)=d$, and the map $\psi$ restricted to the fibres (projective spaces) is given by homogeneous polynomials of degree $d$.

As a corollary of Theorem~\ref{thm:mainSoftToric}, we obtain the Kawaguchi--Silverman conjecture for such projective bundles. Assume that $X$ and $\psi$ are defined over $\overline{\mathbb{Q}}$. Recall that any ample line bundle $\mathcal{L}$ on a projective variety defines a logarithmic height function $h_\mathcal{L}$, which measures the arithmetic complexity of a point. For each point $p\in X(\overline{\mathbb{Q}})$, Kawaguchi and Silverman defined the arithmetic degree as $\alpha_\psi(p):=\lim_{n\ra \infty}h_H(\psi^n(p))^{1/n}$ in \cite{kawaguchisilverman2016degrees,MR3233709} and showed that this limit exists and is independent of the choice of ample line bundle. When $p$ has a Zariski dense orbit under $\psi$, they conjectured \cite[Conjecture 6]{kawaguchisilverman2016degrees} that $\alpha_\psi(p)=\lambda_1(\psi)$. In other words, the arithmetic complexity equals the geometric complexity. See Subsection~\ref{subsec:on-dynamics} for further discussion.

\begin{corollary}\label{cor:mainKSC}
    Let $\mathcal{E}$ be an equivariant vector bundle over a smooth projective toric variety $X$. Consider a surjective endomorphism $\psi:\mathbb{P}(\mathcal{E})\to\mathbb{P}(\mathcal{E})$ such that there exists a toric morphism $\varphi:X\to X$ making diagram \eqref{eq:dagger0} commute. Then, the Kawaguchi--Silverman conjecture holds for $\psi$.
\end{corollary}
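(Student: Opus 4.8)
The plan is to deduce the Kawaguchi--Silverman conjecture (KSC) for $\psi$ from Theorem~\ref{thm:mainSoftToric} together with known structural results: the product formula for dynamical degrees of fibrations, the validity of KSC for surjective endomorphisms of toric varieties, and the descent/ascent behaviour of KSC under equivariant fibrations. First I would dispose of the degenerate case $\mathcal{E}\cong\mathcal{L}^{\oplus r}$ excluded in Theorem~\ref{thm:mainSoftToric}: here $\mathbb{P}(\mathcal{E})\cong X\times\mathbb{P}^{r-1}$, which is itself a toric variety, and $\psi$ is a surjective endomorphism of a toric variety, so KSC holds by the results of Sano--Shibata (and the earlier toric cases) on KSC for toric endomorphisms. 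So assume from now on that $\mathcal{E}$ is not of this form, so Theorem~\ref{thm:mainSoftToric} applies and gives $\lambda_1(\psi)=\lambda_1(\varphi)$.

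Next I would set up the fibration input. We have the commutative diagram \eqref{eq:dagger0} with $\pi\colon\mathbb{P}(\mathcal{E})\to X$ a smooth projective morphism with fibres $\mathbb{P}^{r-1}$, and $\varphi\colon X\to X$ a surjective toric endomorphism. By the product formula for relative dynamical degrees (Dinh--Nguyen, and Nakayama--Zhang in the surjective-endomorphism setting), $\lambda_1(\psi)=\max\bigl(\lambda_1(\varphi),\lambda_1(\psi|_\pi)\bigr)$; combined with Theorem~\ref{thm:mainSoftToric} this forces $\lambda_1(\psi|_\pi)\le\lambda_1(\varphi)=\lambda_1(\psi)$, i.e. the relative degree $d$ of $\psi$ satisfies $d\le\lambda_1(\varphi)$. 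This is exactly the hypothesis needed to run the standard KSC-descent argument for fibrations: for a surjective endomorphism $\psi$ sitting over $\varphi$ via a fibration $\pi$, if KSC holds for $\varphi$ and $\lambda_1(\psi)=\lambda_1(\varphi)$ (equivalently $\lambda_1(\psi|_\pi)\le\lambda_1(\varphi)$), then KSC holds for $\psi$. Concretely, for $p\in\mathbb{P}(\mathcal{E})(\overline{\mathbb{Q}})$ with Zariski-dense $\psi$-orbit, its image $\pi(p)$ has Zariski-dense $\varphi$-orbit in $X$, so $\alpha_\varphi(\pi(p))=\lambda_1(\varphi)$; functoriality of heights under $\pi$ gives $\alpha_\psi(p)\ge\alpha_\varphi(\pi(p))=\lambda_1(\varphi)=\lambda_1(\psi)$, while the reverse inequality $\alpha_\psi(p)\le\lambda_1(\psi)$ is the general upper bound of Kawaguchi--Silverman (or Matsuzawa). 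Hence $\alpha_\psi(p)=\lambda_1(\psi)$, which is KSC for $\psi$.

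The one point requiring care — and the main obstacle — is the descent step: obtaining the lower bound $\alpha_\psi(p)\ge\alpha_\varphi(\pi(p))$ and handling the relative contribution correctly. The clean statement I would cite is Lesieutre--Satriano \cite{lesieutresatriano2021ksc} or Matsuzawa--Sano--Shibata: KSC is preserved under passing from the base of an equivariant fibration to the total space provided it holds on the base \emph{and} on the general fibre, or provided $\lambda_1$ of total space equals $\lambda_1$ of the base. Since the fibres are projective spaces (where KSC is classical, the endomorphism being given by degree-$d$ homogeneous polynomials) and $\lambda_1(\psi)=\lambda_1(\varphi)$ by Theorem~\ref{thm:mainSoftToric}, both sufficient conditions are in force. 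A subtlety is that $\pi\circ\psi=\varphi\circ\pi$ holds on the nose here (not merely after iterating, as in \cite[Lemma~6.2]{lesieutresatriano2021ksc}), which only simplifies matters; and one should note KSC is invariant under replacing $\psi$ by an iterate, so even the $\psi^m$ version would suffice. Assembling these citations with the case division above yields the corollary.
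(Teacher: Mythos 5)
Your argument is correct and takes essentially the same route as the paper: Theorem~\ref{thm:mainSoftToric} (in the form of Corollary~\ref{cor:maincor}) gives $\lambda_1(\psi)=\lambda_1(\varphi)$, KSC for projective toric varieties handles the base, and the Lesieutre--Satriano descent criterion (Proposition~\ref{cor:standard1}) transfers KSC to $\psi$. Your explicit disposal of the split case $\mathcal{E}\cong\mathcal{L}^{\oplus r}$, where $\mathbb{P}(\mathcal{E})$ is itself toric, fills in a case the paper's one-line proof leaves implicit.
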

While Theorem~\ref{thm:mainSoftToric} allows us to conclude interesting properties of surjective endomorphisms of projective toric bundles, it does not directly give any information about the existence or non-existence of such maps. In particular, for a given equivariant vector bundle $\mathcal{E}$, the theorem may be vacuously true, as $\mathbb{P}(\mathcal{E})$ could have no surjective endomorphisms at all. The results of Amerik \cite{Amerik1}, Amerik--Kuznetsova \cite{Amerik2}, and Nasserden--Zotine \cite{NasserdenZotine2023} suggest that if $\lambda_1(\psi\vert_\pi)>1$, then $\mathcal{E}$ splits as a direct sum of line bundles. Combining this heuristic with Theorem~\ref{thm:mainSoftToric} suggests that it is difficult for $\mathbb{P}(\mathcal{E})$ to have any surjective endomorphisms at all.

We verify this heuristic by classifying surjective endomorphisms of the tangent and cotangent bundles of a smooth projective toric variety that satisfy the weak equivariance condition of Theorem~\ref{thm:mainSoftToric}.

\begin{theorem}\label{thm:mainTC}
    Let $X$ be a smooth projective toric variety that is not $(\mathbb{P}^1)^n$ for some $n\geq 1$, and let $\mathcal{E}$ be either the tangent bundle or cotangent bundle of $X$. If $\psi\colon \mathbb{P}(\mathcal{E})\ra \mathbb{P}(\mathcal{E})$ is a surjective morphism such that $\pi \circ \psi^m=\varphi\circ \pi$ for some equivariant surjective endomorphism $\varphi\colon X\ra X$, then $\psi$ is an automorphism. 
\end{theorem}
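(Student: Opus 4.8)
The strategy is to combine Theorem~\ref{thm:mainSoftToric} with an explicit analysis of the transition functions of $\mathcal E$. Write $n=\dim X$, so $\operatorname{rank}\mathcal E=n$ and the fibres of $\pi$ are copies of $\mathbb P^{n-1}$; let $A\in\operatorname{End}(N)$ be the lattice endomorphism underlying the equivariant morphism $\varphi$, so that $\deg\varphi=\lvert\det A\rvert$ and $\lambda_1(\varphi)$ is the spectral radius of $A$.

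\emph{Step 1: reductions.} A surjective endomorphism of a projective variety is finite, so if $\psi^m$ is an automorphism then $1=\deg(\psi^m)=(\deg\psi)^m$, whence $\psi$ is finite of degree one onto the normal variety $\mathbb P(\mathcal E)$, hence an automorphism. We may therefore replace $\psi$ by $\psi^m$ and assume $\pi\circ\psi=\varphi\circ\pi$. Since $X\not\cong(\mathbb P^1)^n$, the bundle $\mathcal E$ is not a direct sum of line bundles --- for a smooth projective toric variety, decomposability of $T_X$, equivalently of $\Omega_X$, forces $X\cong(\mathbb P^1)^n$ --- and in particular $\mathcal E\not\cong\mathcal L^{\oplus n}$, so Theorem~\ref{thm:mainSoftToric} applies and the relative degree $\delta$ of $\psi$ equals $\lambda_1(\varphi)$. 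Finally, because $\psi$ is a morphism its restriction to each fibre is an everywhere-defined self-map of $\mathbb P^{n-1}$ of degree $\delta$, so if $\delta=1$ this restriction is an isomorphism on every fibre; and $\delta=\lambda_1(\varphi)=1$ gives $\lvert\det A\rvert\le\lambda_1(\varphi)^n=1$, so $\varphi$ is an automorphism, and then $\psi$, being finite of degree $\deg\varphi=1$, is an automorphism. Thus \emph{everything reduces to proving $\delta=1$}.

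\emph{Step 2: transition functions.} Cover $X$ by the affine charts $U_\sigma\cong\mathbb A^n$ attached to the maximal cones $\sigma$ of the fan $\Sigma$ of $X$; on $U_\sigma$ use the monomial coordinates $x^\sigma=(x^\sigma_1,\dots,x^\sigma_n)$ dual to the primitive ray generators of $\sigma$, and trivialize $\mathcal E$ by the associated coordinate frame (for $T_X$) or coframe (for $\Omega_X$). The transition of $\mathcal E$ from $U_{\sigma'}$ to $U_\sigma$ is then the Jacobian (resp.\ its inverse transpose) of the Laurent-monomial change of coordinates $x^{\sigma'}\mapsto x^\sigma$, a matrix each of whose entries is an integer times a Laurent monomial. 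In these trivializations, since $\psi(\pi^{-1}(U_\sigma))\subseteq\pi^{-1}(U_\tau)$ for a maximal cone $\tau$ containing $A(\sigma)$, the map $\psi$ restricts on $\pi^{-1}(U_\sigma)=U_\sigma\times\mathbb P^{n-1}$ to $(x^\sigma,[z^\sigma])\mapsto(\varphi(x^\sigma),[F^\sigma(x^\sigma,z^\sigma)])$ with $F^\sigma_i(x^\sigma,z^\sigma)=\sum_{\lvert\lambda\rvert=\delta}s^\sigma_{i,\lambda}(x^\sigma)(z^\sigma)^\lambda$ and $s^\sigma_{i,\lambda}\in\mathbb C[x^\sigma_1,\dots,x^\sigma_n]$. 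These data satisfy two families of constraints: (i) cocycle relations on each overlap $U_\sigma\cap U_{\sigma'}$ expressing $F^{\sigma'}$ in terms of $F^{\sigma}$, the source transition matrix, and the target transition matrix, up to an overall nonvanishing scalar; and (ii) for every $x^\sigma\in U_\sigma$ --- in particular the origin $x_\sigma$, i.e.\ the torus-fixed point of $U_\sigma$ --- the polynomials $F^\sigma_0(x^\sigma,\cdot),\dots,F^\sigma_{n-1}(x^\sigma,\cdot)$ have no common zero on $\mathbb P^{n-1}$.

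\emph{Step 3: forcing $\delta=1$, and the main obstacle.} From (i) one extracts that each $s^\sigma_{i,\lambda}$ is a single monomial: a polynomial on $U_\sigma\cong\mathbb A^n$ that on every overlapping chart equals a Laurent monomial times a polynomial, on a cover of the complete variety $X$, is a monomial (equivalently, the $s^\sigma_{i,\lambda}$ are monomial sections of line bundles on $X$). Evaluating (ii) at $x_\sigma$ then annihilates every non-constant $s^\sigma_{i,\lambda}$, so $F^\sigma_i(x_\sigma,\cdot)$ is supported on those $\lambda$ for which $s^\sigma_{i,\lambda}$ is constant, and the requirement that $[F^\sigma(x_\sigma,\cdot)]$ be a morphism of degree $\delta$ forces the fibre-map over $x_\sigma$ to be monomial, of the form $[z_{j_0}^\delta:\cdots:z_{j_{n-1}}^\delta]$ up to scalars. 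Feeding this rigid shape back into the cocycle relations and comparing with the Jacobian transition matrices of $\mathcal E$ along a chain of adjacent maximal cones yields a contradiction unless $\delta=1$: because $X\not\cong(\mathbb P^1)^n$ these Jacobian transitions are not all monomial matrices --- otherwise $\mathcal E$ would split as a sum of line bundles --- so they genuinely mix coordinates, and a mixed transition is incompatible with monomial fibre-maps at fixed points when $\delta\ge2$; specializing the same relations to $\delta=1$ also shows directly that $A$ must be invertible, which re-proves that $\varphi$ is an automorphism. The crux, and the step I expect to be hardest, is precisely this bookkeeping: organizing the interplay between the monomial coefficient data, the fixed-point constraint, and all the Jacobian transition matrices, via a combinatorial case analysis that tracks which transitions mix which coordinates. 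It is here that the hypothesis $X\not\cong(\mathbb P^1)^n$ is indispensable --- for $(\mathbb P^1)^n$ the bundle $\mathcal E$ is decomposable, $\mathbb P(\mathcal E)$ is toric, and non-automorphic equivariant endomorphisms exist, so the statement genuinely fails --- and a preliminary lemma (a smooth projective toric variety with decomposable tangent bundle is a product of projective lines) is needed both to license the use of Theorem~\ref{thm:mainSoftToric} in Step~1 and to make the dichotomy ``monomial versus mixing transitions'' precise.
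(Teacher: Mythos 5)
Your Step~1 reductions are essentially sound and parallel the paper's (the paper additionally reduces the base map to a toric Frobenius $[d]$ via Lemma~\ref{lemma:toric reduction to mult by d} and Proposition~\ref{prop:firbres cor}, and disposes of the nef-tangent-bundle cases, where $X$ is a product of projective spaces, by Mori/Fujino--Sato together with Amerik--Kuznetsova rather than by transition functions). The problem is Step~3, where the theorem actually lives. Your pivotal claim --- that the cocycle relations force each coefficient $s^{\sigma}_{i,\lambda}$ to be a single monomial, i.e.\ a monomial section of a line bundle on $X$ --- is unjustified and in fact false. Because $T_X$ and $\Omega_X$ do not split, the transition matrices are the Jacobians in \eqref{sec: tangent equation transition tangent} and \eqref{sec: tangent equation transition COtangent}, which have off-diagonal entries; hence $\Sym^d$ of them mixes the coefficients $s^{\sigma}_{i,\lambda}$ across both $i$ and $\lambda$ on overlaps, and these coefficients are not sections of line bundles on $X$ at all. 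Even in the split case, where they \emph{are} sections of line bundles (Theorem~\ref{thm:linebundlecase}), nothing makes them monomials: $\psi$ is not assumed equivariant, and Example~\ref{ex:PP2-partial-classification} exhibits huge families with non-monomial coefficients. Consequently your next assertion --- that the fibre map over the torus-fixed point must be of the rigid monomial form $[z_{j_0}^\delta:\cdots:z_{j_{n-1}}^\delta]$ --- also does not follow (even if each coefficient were a monomial, several could be constant for the same index $i$).

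Beyond this, the step you yourself flag as the crux --- deriving a contradiction for $\delta\ge 2$ from the interplay of the fixed-point constraint with the mixing Jacobian transitions --- is exactly the substance of the paper's proof, and it is not carried out in your plan. The paper does this by writing the compatibility conditions explicitly (Lemma~\ref{lem: tangent compatibility condition tangent bundle} for $T_X$, Lemma~\ref{lem: cotangent compatibility condition} for $\Omega_X$), extracting carefully chosen coefficients $[\mathbf{z}^\eta]$, using the homogeneity and independence of the resulting Laurent monomials (Lemma~\ref{lemma:indep-monomials-tangent}) to show that specified coefficients of the fibre polynomials have vanishing constant term, and then contradicting surjectivity on the fibre over the torus-fixed point via Observation~\ref{obs:Bezout}; the cotangent case needs a genuinely different choice of extracted coefficients and inequalities on the wall coefficients $a_i$. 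Without a correct replacement for the false monomiality claim and an actual execution of this bookkeeping (including the case where all wall coefficients are nonnegative, which your route would also have to cover, whereas the paper handles it separately), the proposal is a plan with its central step missing rather than a proof.
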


We must exclude products of $\mathbb{P}^1$ in the above result as these toric varieties are precisely those for which the tangent bundle and cotangent bundle are direct sums of line bundles. We treat the case of $(\mathbb{P}^1)^n$ separately in Example~\ref{ex:PP1-classification}. In order to prove Theorem~\ref{thm:mainTC}, we require access to the transition functions of the tangent and cotangent bundles, which can be computed directly. With the transition functions in hand, one may reduce the existence of a surjective morphism to a question of combinatorial commutative algebra which can be directly attacked. To extend these results to other equivariant vector bundles requires explicitly computing the transition functions of these bundles.\\

The remainder of this paper is organized as follows. Section~2 introduces the relevant background and provides further references for the interested reader. Theorem~\ref{thm:mainSoftToric} is proved in Section~3, together with other structural results pertaining to surjective endomorphisms of projective equivariant bundles over toric varieties. In Section~4 we introduce the \emph{transition function method}, which is the main technical tool allowing us to classify surjective endomorphisms of projective bundles, and is used extensively in the subsequent sections. Section~5 proves Theorem~\ref{thm:mainLineBundle} and applies it to classify surjective morphisms of Hirzebruch surfaces. Finally, Section~6 establishes Theorem~\ref{thm:mainTC}, proving the non-existence of non-trivial surjective endomorphisms for the projectivizations of the tangent and cotangent bundles of smooth toric varieties. Throughout we work over $\overline{\bQ}$.

\section{Background}
\label{sec:background}
\subsection{Dynamics}
\label{subsec:on-dynamics}
We discuss some tools for studying surjective maps of projective bundles. Fix $X$ to be a normal projective variety over an algebraically closed field and let $\E$ be a vector bundle on $X$. Central to the study of surjective morphisms of projective bundles are commutative diagrams
\begin{equation}
    \label{eq:dagger1}
    \begin{tikzcd}[ampersand replacement = \&]
    \bP(\E) \arrow[r, "\psi"] \arrow[d, "\pi" left] \& \bP(\E) \arrow[d, "\pi" right] \\
    X \arrow[r, "\varphi" below] \& X
    \end{tikzcd}
    \tag{$\dagger$}
\end{equation}
where $\psi$ and $\varphi$ are surjective morphisms. Following \cite[Tag 01OB]{stacks-project}, we use $\O_{\bP(\E)}(1)$ to denote the canonical quotient of $\pi^* \E$; the restriction of $\O_{\bP(\E)}(1)$ to a fibre of $\pi$ is $\O_{\bP^{\rank \E}}(1)$. The centrality of such diagrams is due to the following.

\begin{proposition}{\cite[Lemma~6.2]{lesieutresatriano2021ksc}}\label{prop:reductiontodagger}
If $X$ is a smooth projective variety and $\psi\colon \bP(\E)\to \bP(\E)$ is any surjective endomorphism, then there is some integer $n\geq 1$ and a surjective endomorphism $\varphi\colon X\to X$ such that the diagram
\begin{align*}
\xymatrix{\bP(\E)\ar[r]^{\psi^n}\ar[d]_\pi & \bP(\E)\ar[d]^\pi\\ X\ar[r]_\varphi & X} 
\end{align*}
commutes.
\end{proposition}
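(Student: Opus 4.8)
The plan is to produce the base morphism $\varphi$ not by hand but by exploiting the rigidity of the fibration $\pi\colon \bP(\E)\to X$ together with the fact that some power of $\psi$ must preserve the fibres up to the obvious equivalence. The key observation is that $\pi$ is, intrinsically, the maximal rationally connected fibration (MRC fibration) of $\bP(\E)$ over its image, or — if one prefers an algebraic substitute that avoids characteristic hypotheses — that the relative Picard group $\Pic(\bP(\E)/X)\cong\bZ$, generated by $\O_{\bP(\E)}(1)$, picks out the fibres of $\pi$ as the unique family of subvarieties on which that class is ample of the expected degree. Concretely, first I would observe that $\psi$, being a surjective endomorphism of a smooth projective variety, is finite (a surjective endomorphism of a projective variety with $\dim X = \dim X$ is finite), hence $\psi^*$ permutes the extremal rays of the Mori cone; the ray $R$ contracted by $\pi$ is one of finitely many such rays, so there is an integer $n\geq 1$ with $(\psi^n)_* R = R$. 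This means $\psi^n$ sends fibres of $\pi$ to fibres of $\pi$ (a curve in $R$ is mapped to a curve in $R$, and a fibre of $\pi$ is swept out by such curves).

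Given that $\psi^n$ maps each fibre $\pi^{-1}(p)$ into a single fibre of $\pi$, define $\varphi\colon X\to X$ set-theoretically by $\varphi(p) = \pi(\psi^n(\pi^{-1}(p)))$. The second step is to upgrade this to a morphism: since $\bP(\E)\to X$ is a fibre bundle (in particular faithfully flat and proper), and $\pi\circ\psi^n$ is constant on fibres of $\pi$, the morphism $\pi\circ\psi^n\colon\bP(\E)\to X$ factors through $\pi$ by descent — one checks that the two compositions $\bP(\E)\times_X\bP(\E)\rightrightarrows\bP(\E)\xrightarrow{\pi\circ\psi^n}X$ agree, which holds precisely because $\pi\circ\psi^n$ is constant on geometric fibres of $\pi$, and then $\pi\circ\psi^n$ descends to the claimed $\varphi\colon X\to X$ with $\varphi\circ\pi=\pi\circ\psi^n$. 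Finally, $\varphi$ is surjective because $\pi$ and $\psi^n$ are surjective and $\pi\circ\psi^n = \varphi\circ\pi$; and replacing $\psi$ by $\psi^n$ in the diagram gives the assertion. (Since the excerpt only needs the existence statement with $\psi^n$ on top, there is nothing further to check.)

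The main obstacle is the descent step — passing from ``constant on fibres'' to ``factors through $\pi$ as a morphism of schemes.'' Over a field one can invoke rigidity lemmas (e.g. the rigidity lemma for proper morphisms with connected fibres: if $f\colon Y\to X$ is proper with $f_*\O_Y = \O_X$ and $g\colon Y\to Z$ contracts one fibre of $f$ to a point, then $g$ contracts every fibre and factors through $f$), which applies here since $\pi_*\O_{\bP(\E)} = \O_X$ and each fibre is connected. The subtlety is only that one must first arrange that $\psi^n$ maps fibres \emph{into} fibres, which is exactly what the Mori-cone argument of the first paragraph supplies; alternatively one can argue via $(\psi^n)^*\O_{\bP(\E)}(1)$ being $\pi$-relatively ample up to twist by a pullback from $X$, forcing $\psi^n$ to respect the fibration. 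Either route is short; the cited reference \cite[Lemma~6.2]{lesieutresatriano2021ksc} presumably takes the Mori-theoretic one, and I would follow it.
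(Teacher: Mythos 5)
The paper offers no proof of this proposition at all: it is quoted verbatim from \cite[Lemma~6.2]{lesieutresatriano2021ksc}, so there is nothing in the text to compare against line by line. Your overall skeleton is the natural (and presumably the reference's) one: show that some iterate $\psi^n$ maps fibres of $\pi$ into fibres, then descend $\pi\circ\psi^n$ through $\pi$ by rigidity, using that $\pi$ is proper with $\pi_*\O_{\bP(\E)}=\O_X$ and connected fibres; surjectivity of $\varphi$ is then immediate. That second half of your argument is fine, as is the inference that $(\psi^n)_*R=R$ forces fibres to map into fibres (a connected subvariety swept out by curves with class in $R$ is contracted by $\pi$).

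The problem is the first half, which is where the actual content of the lemma lives. First, the aside that $\pi$ is the MRC fibration of $\bP(\E)$ is false exactly in the setting where this paper uses the proposition: for $X$ toric (hence rational), $\bP(\E)$ is rationally connected, so its MRC fibration is the constant map, not $\pi$; this cannot be repaired into the intrinsic characterization you want. Second, and more seriously, the step ``the ray $R$ contracted by $\pi$ is one of finitely many such rays, so some $(\psi^n)_*$ fixes $R$'' is asserted without saying which finite set is meant or why it is finite and $\psi_*$-stable. It is true that $\psi_*$ permutes the extremal rays of $\overline{\mathrm{NE}}(\bP(\E))$ (since $\psi$ is finite and $\psi_*\psi^*=\deg(\psi)\cdot\operatorname{id}$), but the Mori cone of $\bP(\E)$ can have infinitely many extremal rays --- even infinitely many $K$-negative ones when the base is not Fano-like --- so ``permutes extremal rays'' does not by itself give a finite orbit of $R$. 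To close this you need a genuine argument: for instance, that each $(\psi^k)_*R$ is again a $K$-negative extremal ray of fibre type (this part is checkable via the ramification formula) together with a finiteness statement for such rays, or an argument on $N^1(\bP(\E))$ exploiting that $\psi^*$ preserves the nef cone and that $\pi^*\operatorname{Nef}(X)$ is a distinguished face. As written, the existence of $n$ with $(\psi^n)_*R=R$ --- the heart of the proposition --- is not established, and deferring it to ``presumably the reference does this'' is not a proof.
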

In fact this holds when $\pi$ is a so called Mori-fibre space, which appear as terminating steps in the minimal program. This proposition allows one to replace the morphism $\psi\colon \bP(\E)\to \bP(\E)$ with some iterate so that we have a commutative diagram \eqref{eq:dagger1}. As many dynamical properties are preserved by iteration, we may often assume that there exists a diagram \eqref{eq:dagger1}. Such an assumption allows us to break the  dynamics $\psi$ into two pieces: the dynamics of $\varphi$ and the dynamics of the induced morphisms on the fibres of $\pi$. To measure the dynamical complexity of $\psi$ on the fibres, we use the relative dynamical degree.

\begin{definition}[{\cite[Definition 2.1]{lesieutresatriano2021ksc}}]
    \label{def:relDyndeg}
    Let $X$ and $Y$ be normal projective varieties. Suppose that we have a diagram
    \[\xymatrix{
    X\ar[r]^\psi\ar[d]_\pi & X\ar[d]^\pi \\ Y\ar[r]_\varphi & Y}\]
    where $\psi,\varphi$, and $\pi$ are surjective morphisms. Fix ample divisors $H$ on $X$ and $W$ on $Y$. We define the \textit{dynamical degree of $\psi$ relative to $\pi$} by the formula
    \[
        \lambda_1(\psi\vert_\pi) \coloneqq \lim_{n\to \infty}\left((\psi^n)^*H\cdot (\pi^*W^{\dim Y})\cdot H^{\dim X-\dim Y-1}\right)^{\frac{1}{n}}.
    \]
\end{definition}

\begin{definition}\label{def:dynDeg}
Take $Y$ to be the spectrum of the base field and $\pi$ the structure map of $X$. In this case we obtain the usual notion of  dynamical degree
\begin{equation*}
\lambda_1(\psi)=\lim_{n\ra \infty}\left((\psi^n)^*H\cdot H^{\dim X-1}\right)^{\frac{1}{n}}
\end{equation*}

\end{definition}

These limit exists and is independent of $H$ and $W$ by \cite[Theorem~1.1]{truong2020relative}. The first dynamical degree is closely connected to the relative dynamical degree. In particular, \cite[Theorem~2.2.2]{lesieutresatriano2021ksc} gives $\lambda_1(\psi)=\max\{\lambda_1(\varphi),\lambda_1(\psi\vert_\pi)\}$.

When we have a diagram \eqref{eq:dagger1}, the fibres of $\pi$ are projective spaces, and the relative dynamical degree can be related to the degree of $\psi$ restricted to the fibres of $\pi$. 

\begin{proposition}[{\cite[Proposition 2.3]{NasserdenZotine2023}}]\label{prop:fibreProp}
    Let $X$ be a normal projective variety and let $\E$ be a vector bundle on $X$. Suppose that we have a diagram of the form \eqref{eq:dagger1}. Then,
    \[
        \psi^*(\O_{\bP(\E)}(1))\cong\O_{\bP(\E)}(\lambda_1(\psi\vert_\pi))\otimes\pi^*\B
    \]
    for some line bundle $\B$ on $X$. 
\end{proposition}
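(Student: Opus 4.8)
The plan is to analyze how $\psi$ interacts with the relative Picard group of $\pi$. Since the fibres of $\pi$ are projective spaces, we have the exact sequence
\[
    0 \to \operatorname{Pic}(X) \xrightarrow{\pi^*} \operatorname{Pic}(\bP(\E)) \to \bZ \to 0,
\]
where the last map records the degree of the restriction of a line bundle to a fibre $F \cong \bP^{\rank\E}$, and $\O_{\bP(\E)}(1)$ maps to $1 \in \bZ$. First I would apply $\psi^*$ to $\O_{\bP(\E)}(1)$ and read off its image in $\bZ$: this image is precisely the degree of $\psi^*\O_{\bP(\E)}(1)$ restricted to a general fibre $F$. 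Because $\psi$ covers $\varphi$, it maps the fibre $F = \pi^{-1}(p)$ into the fibre $\pi^{-1}(\varphi(p))$, and the restriction $\psi|_F\colon F \to \varphi(F)$ is a morphism of projective spaces, hence given by degree-$e$ homogeneous polynomials for some $e \geq 1$; pulling back $\O(1)$ on the target along this map gives $\O(e)$ on $F$. Thus $\psi^*\O_{\bP(\E)}(1)$ maps to $e$ in $\bZ$, and from the exact sequence we conclude $\psi^*\O_{\bP(\E)}(1) \cong \O_{\bP(\E)}(e) \otimes \pi^*\B$ for some line bundle $\B$ on $X$.

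It then remains to identify $e$ with the relative dynamical degree $\lambda_1(\psi|_\pi)$. For this I would iterate: applying the displayed isomorphism $n$ times and using $\psi^*\pi^* = \pi^*\varphi^*$ gives $(\psi^n)^*\O_{\bP(\E)}(1) \cong \O_{\bP(\E)}(e^n) \otimes \pi^*\B_n$ for a suitable line bundle $\B_n$ on $X$ (explicitly $\B_n = \B \otimes \varphi^*\B \otimes \cdots \otimes (\varphi^{n-1})^*\B$). Now plug $H = $ a fixed ample divisor into the defining formula of Definition~\ref{def:relDyndeg}, taking $H$ of the form $\O_{\bP(\E)}(1) \otimes \pi^*A$ with $A$ sufficiently ample on $X$ so that $H$ is ample on $\bP(\E)$. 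Expanding the intersection number $(\psi^n)^*H \cdot \pi^*W^{\dim X - \dim F} \cdot H^{\dim F - 1}$ — here $\dim F = \rank\E$ — the factor $\pi^*W^{\dim X}$ already saturates all the "horizontal" directions, so only the fibre part of $(\psi^n)^*H$ contributes; this fibre part has degree $e^n$. Hence the intersection number grows like $e^n$ times a constant, and taking $n$-th roots and the limit yields $\lambda_1(\psi|_\pi) = e$.

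The main obstacle is the bookkeeping in the intersection-theoretic estimate: one must argue carefully that the $\pi^*\B_n$ correction term contributes only lower-order growth (it is pulled back from $X$, so after intersecting against $\pi^*W^{\dim X}$ it makes a bounded or subexponential contribution compared to $e^n$), and that the degree on a general fibre genuinely equals the exponent appearing in $(\psi^n)^*\O_{\bP(\E)}(1)$. Both points are standard — the first because $\varphi^*$ acts on the finitely many relevant classes in $N^1(X)$ with polynomially-bounded iterates up to the factor already absorbed into $\lambda_1(\varphi)$, and the projection formula handles the rest; the second is exactly the fibre-restriction computation above. Alternatively, one can avoid re-deriving the intersection estimate by invoking the product-formula $\lambda_1(\psi) = \max\{\lambda_1(\varphi), \lambda_1(\psi|_\pi)\}$ together with a direct computation of $\lambda_1(\psi|_\pi)$ via the restricted maps $\psi^n|_F$, whose degrees multiply to $e^n$; this gives $\lambda_1(\psi|_\pi) = \lim (e^n)^{1/n} = e$ essentially by definition.
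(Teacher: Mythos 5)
Your proof is correct and takes the route one would expect: the paper does not actually prove this proposition itself (it is imported from the cited reference \cite{NasserdenZotine2023}), and your two steps --- splitting $\psi^*\O_{\bP(\E)}(1)$ via $\Pic(\bP(\E))\cong\pi^*\Pic(X)\oplus\bZ\,\O_{\bP(\E)}(1)$ with fibre degree $e$, then computing the defining limit for $\lambda_1(\psi\vert_\pi)$ with $H=\O_{\bP(\E)}(1)\otimes\pi^*A$ and noting that every term involving a class pulled back from $X$ is annihilated by $\pi^*W^{\dim X}$ for dimension reasons (so the $\pi^*\B_n$ correction contributes exactly zero, not merely subexponentially) --- constitute the standard argument. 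Two cosmetic slips that affect nothing: the fibre has dimension $\rank\E-1$, not $\rank\E$, and your parenthetical formula for $\B_n$ should carry $e$-power twists, namely $\B_n=\bigotimes_{i=0}^{n-1}(\varphi^{i})^{*}\B^{\otimes e^{\,n-1-i}}$, though the argument never uses this explicit expression.
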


Observe that Proposition~\ref{prop:fibreProp} further shows that the relative dynamical degree is an integer. This is further emphasized by the following proposition, which shows that the relative dynamical degree is equal to the degree of $f$ on the fibres of $\pi$.

\begin{proposition}[{\cite[Proposition 2.4]{NasserdenZotine2023}}]\label{prop:fibreProp2}
    Let $X$ be a normal projective variety and let $\E$ be a vector bundle on $X$. Suppose that we have a diagram of the form \eqref{eq:dagger1}. Then, the relative dynamical degree $\lambda_1(\psi\vert_\pi)$ is the dynamical degree of $\psi$ on the fibres of $\pi$.
\end{proposition}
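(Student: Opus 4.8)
The plan is to reduce the statement to the identification of two numbers: the relative dynamical degree $\lambda_1(\psi\vert_\pi)$, defined intersection-theoretically, and the degree $e$ of the homogeneous polynomials defining the map $\psi|_F\colon F \to \varphi(F)$ on a general fibre $F \cong \bP^{\rank\E - 1}$. Both are already known to be well-defined and independent of choices: the former by \cite[Theorem~1.1]{truong2020relative}, and the latter because $\psi$ is a morphism so its restriction to the general fibre is a finite morphism of projective spaces, all of which are conjugate under the base change $\varphi$ to a fixed fibre. So the content is purely the equality $\lambda_1(\psi\vert_\pi) = e$.

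First I would invoke Proposition~\ref{prop:fibreProp}: it gives $\psi^*\O_{\bP(\E)}(1) \cong \O_{\bP(\E)}(\lambda_1(\psi\vert_\pi)) \otimes \pi^*\B$ for some line bundle $\B$ on $X$. Restricting this isomorphism to a general fibre $F$ of $\pi$, the pullback $\pi^*\B$ becomes trivial, and $\O_{\bP(\E)}(1)|_F \cong \O_{\bP^{\rank\E - 1}}(1)$; moreover $\psi$ maps $F$ into a fibre $F' = \varphi(F)$ of $\pi$ over $\varphi(p)$ (this is exactly the commutativity of \eqref{eq:dagger1}), and $\O_{\bP(\E)}(1)|_{F'}$ is again $\O(1)$ on that projective space. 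Hence $(\psi|_F)^*\O_{F'}(1) \cong \O_F(\lambda_1(\psi\vert_\pi))$. But by definition of the degree of a morphism of projective spaces, $(\psi|_F)^*\O_{F'}(1) \cong \O_F(e)$ where $e$ is the common degree of the defining polynomials $F_0,\dots,F_r$. Comparing, $\lambda_1(\psi\vert_\pi) = e$, which is precisely the assertion that the relative dynamical degree is the dynamical degree of $\psi$ on the fibres of $\pi$ (the dynamical degree of a degree-$e$ endomorphism of $\bP^{r}$ being $e$).

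The one point that requires a little care — and is really the main obstacle — is justifying that the restriction of $\psi$ to a \emph{general} fibre is genuinely a finite surjective morphism $F \to F'$ of projective spaces of the same dimension, so that "the degree of the defining polynomials" makes sense and equals the dynamical degree $\lambda_1(\psi|_F)$. Since $\psi$ and $\varphi$ are surjective and $\pi$ is flat with fibres $\bP^{\rank\E-1}$, a dimension count over a general point $p$ (using that $\varphi$ is generically finite — indeed surjective between varieties of the same dimension) shows $\psi(F) = F'$ and $\dim F = \dim F'$; then $\psi|_F$ is a surjective morphism between projective varieties of equal dimension, hence finite, and a finite surjective endomorphism of $\bP^r$ is given by $r+1$ homogeneous forms of a common degree $e \geq 1$ with no common zero, with $\lambda_1 = e$. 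Everything else is a formal consequence of Proposition~\ref{prop:fibreProp} by restriction, as above, so I would keep the write-up short: state the reduction, restrict the relation of Proposition~\ref{prop:fibreProp} to a general fibre, and read off the equality.
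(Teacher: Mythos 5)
Your argument is correct, and it is the natural one: restrict the relation of Proposition~\ref{prop:fibreProp} to a fibre $F=\pi^{-1}(p)$, note that $\pi^*\B|_F$ is trivial and that $\psi(F)\subseteq F'=\pi^{-1}(\varphi(p))$ by commutativity of \eqref{eq:dagger1}, so $(\psi|_F)^*\O_{F'}(1)\cong\O_F(\lambda_1(\psi\vert_\pi))$, and then identify the degree of the defining forms with the dynamical degree on $\bP^r$ via Proposition~\ref{sec bg: degrees for endos of pn}. Your care about $\psi|_F$ being non-constant (hence finite surjective between projective spaces of equal dimension) for general $p$ is exactly the point that needs saying, and your surjectivity/dimension argument handles it; note also that the constancy of the fibre degree across fibres is immediate from the global isomorphism in Proposition~\ref{prop:fibreProp}, so no conjugation argument is needed. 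One caveat for comparison: this paper does not prove the statement at all --- it quotes it from \cite{NasserdenZotine2023} --- so your derivation is only as independent as its reliance on Proposition~\ref{prop:fibreProp} (also quoted from the same source) allows; if the source were to prove that proposition using the fibre-degree identification, your argument would be circular as a reproof of the original, but as a deduction within the logical structure of the present paper (Proposition~\ref{prop:fibreProp} stated first, then this statement) it is valid and complete.
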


The possible degree on the fibres of $f$ depends on the geometry of $\E$. For example if $\E$ is trivial then $\bP(\E)=\bP^{\rank \E-1}\times X$ and any degree on the fibres is possible. 

\begin{proposition}\label{sec bg: degrees for endos of pn}
    Let $\varphi:\bP^n\to\bP^n$ be a surjective map. Then, $\varphi$ is finite of degree $d^n$, where $\varphi^*\O_{\bP^n}(1)=\O_{\bP^n}(d)$. Moreover, in this case the first dynamical degree of $\varphi$ is $\lambda_1(\varphi)=d$.
\end{proposition}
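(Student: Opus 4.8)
The plan is to first establish finiteness, then compute the degree, then deduce the dynamical degree. For finiteness: a surjective morphism $\varphi\colon \bP^n\to\bP^n$ between projective varieties of the same dimension is automatically quasi-finite on a dense open set, but to get finiteness everywhere I would argue that $\varphi$ cannot contract any curve. Indeed, if $C\subset\bP^n$ were a curve contracted by $\varphi$, then $\varphi^*\O_{\bP^n}(1)\cdot C=0$; but $\varphi^*\O_{\bP^n}(1)\cong\O_{\bP^n}(d)$ for some integer $d$ (since $\Pic(\bP^n)=\bZ$ and $\varphi^*$ of an ample bundle is ample because $\varphi$ is finite onto its image, hence $d\geq 1$), so $\varphi^*\O_{\bP^n}(1)\cdot C = d\deg C > 0$, a contradiction. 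Thus $\varphi$ has finite fibres, and a proper morphism with finite fibres is finite.

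Next, to compute the degree $\deg\varphi$ as a finite map: I would intersect with the preimage of a general point, or equivalently use the projection formula. The degree equals $\varphi^*(\text{pt})$ counted with multiplicity, which can be computed as the intersection number $\varphi^*(H^n)$ where $H$ is the hyperplane class and a general point is $H^n$ (a zero-cycle of degree $1$). By the projection formula and multiplicativity of intersection numbers under pullback along a generically finite morphism, $\deg\varphi = (\varphi^*H)^n / H^n$... more precisely, for $\varphi$ finite of degree $e$ one has $(\varphi^*H)^n = e\cdot H^n$ in the sense that $\varphi_*\varphi^*\alpha = e\,\alpha$. Taking $\alpha = H^n$ (the class of a point, with $H^n=1$) and using $\varphi^*H = dH$, we get $e\cdot 1 = (\varphi^*H)^n = (dH)^n = d^n$, so $\deg\varphi = d^n$.

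Finally, for the dynamical degree: by Definition~\ref{def:dynDeg}, $\lambda_1(\varphi) = \lim_{n\to\infty}\big((\varphi^n)^*H\cdot H^{\dim\bP^n - 1}\big)^{1/n}$. Since $(\varphi^m)^*\O_{\bP^n}(1) \cong \O_{\bP^n}(d^m)$ by iterating $\varphi^*\O_{\bP^n}(1)\cong\O_{\bP^n}(d)$, we have $(\varphi^m)^*H\cdot H^{n-1} = d^m\,(H^n) = d^m$, and taking $m$-th roots gives $\lambda_1(\varphi) = d$.

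The main obstacle is the finiteness step — ensuring $\varphi$ is finite rather than merely generically finite. Everything else is a routine consequence of $\Pic(\bP^n)\cong\bZ$ and elementary intersection theory. One subtlety to handle carefully: justifying that $d\geq 1$ (i.e.\ $\varphi^*\O(1)$ is genuinely ample, not trivial or anti-ample) before running the no-contracted-curves argument; this follows because a surjective, hence dominant, morphism of projective varieties of equal dimension is generically finite, so $\varphi^*H$ has positive top self-intersection, forcing $d>0$.
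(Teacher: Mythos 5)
Your argument is correct, but it takes a different route from the paper. The paper's proof is a one-line appeal to the coordinate description of morphisms to projective space: $\varphi$ is given by $n+1$ sections of $\O_{\bP^n}(d)$ with no common zero, i.e.\ by $n+1$ degree-$d$ forms, which immediately forces finiteness and topological degree $d^n$ (essentially B\'ezout), and $\lambda_1(\varphi)=d$ then falls out of the definition exactly as in your last step. You instead avoid coordinates entirely and argue intrinsically: $\Pic(\bP^n)\cong\bZ$ gives $\varphi^*\O(1)\cong\O(d)$, the no-contracted-curves argument upgrades generic finiteness to finiteness, and the projection formula gives $\deg\varphi=d^n$. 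What your approach buys is generality --- it works verbatim for any surjective endomorphism of a smooth projective variety whose Picard group is generated by an ample class --- at the cost of being longer for $\bP^n$ itself. One small point to tighten: in your fix for $d\geq 1$ you argue $(\varphi^*H)^n>0$ forces $d>0$, but when $n$ is even this only gives $d\neq 0$; you should add that $\varphi^*H$ is nef (pullback of a nef class under a morphism is nef), or that a nonzero section of $\O(1)$ pulls back to a nonzero section of $\O(d)$ since $\varphi$ is dominant, to exclude $d<0$. This is a cosmetic repair, not a gap in the overall strategy.
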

\begin{proof}
This follows from the definition of the first dynamical degree and the fact that a morphism to a projective space is given by $n+1$ sections of $\O_{\bP^n}(q)$ for some $q$. This completely determines the degree.
\end{proof}

 Now let $X$ be a normal projective variety defined over $\overline{\mathbb{Q}}$ and $H$ an ample divisor on $X$. Associated to $H$ is a Weil height function $h_H$ that measures the arithmetic complexity of the point $P\in X(\Qbar)$ relative to $H$. If $\varphi\colon X\ra X$ is a surjective endomorphism Kawaguchi and Silverman defined the \textit{arithmetic degree} of $P$ with respect to $\varphi$ as
\begin{equation*}
	\alpha_\varphi(P):=\lim_{n\rightarrow \infty}h^+_{H} \bigl( \varphi^n(P) \bigr)^{\frac{1}{n}},
\end{equation*}
where $h^+_{H} \bigl( \varphi^n(P) \bigr)=\max\{1, h_{H} \bigl( \varphi^n(P) \bigr)\}$. See \cite{kawaguchisilverman2016degrees} for the details of this construction. The arithmetic degree measures the growth of heights along a forward orbit of $\varphi$. Kawaguchi and Silverman conjecture a strong relationship between these two distinct notions of complexity. 

\begin{conjecture}[{\cite[Conjecture 6]{kawaguchisilverman2016degrees}}]\label{conj:KSC}
	Let $X$ be a normal projective variety defined over $\Qbar$ and let $\varphi\colon X\ra X$ be a surjective endomorphism. Let $P\in X(\Qbar)$ be a point such that the forward orbit $O_f(P)$ is Zariski dense in $X$. Then $\alpha_\varphi(P)=\lambda_1(\varphi)$.
\end{conjecture}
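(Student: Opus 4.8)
The plan is to prove the two inequalities $\alpha_\varphi(P)\le\lambda_1(\varphi)$ and $\alpha_\varphi(P)\ge\lambda_1(\varphi)$ separately, with the Zariski-density hypothesis entering only in the second. The first inequality needs no hypothesis on $P$: by functoriality of Weil heights one has $h_H(\varphi^n(P))=h_{(\varphi^n)^*H}(P)+O(1)$, and for a surjective endomorphism the classes $(\varphi^n)^*H\in N^1(X)_{\mathbb R}$ grow at the rate of the spectral radius of $\varphi^*$, which equals $\lambda_1(\varphi)$; bounding $(\varphi^n)^*H$ above by a positive multiple $c_nH$ of the ample class $H$ with $c_n=O\big((\lambda_1(\varphi)+\varepsilon)^n\big)$ for each fixed $\varepsilon>0$ then gives $h_H(\varphi^n(P))\le c_nh_H(P)+O(1)$, whence $\alpha_\varphi(P)\le\lambda_1(\varphi)$.

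For the reverse inequality the plan is to construct an arithmetic invariant that detects the growth rate $\lambda_1(\varphi)$. Since $\varphi^*$ preserves the closed, pointed, full-dimensional cone $\mathrm{Nef}(X)\subset N^1(X)_{\mathbb R}$, a Perron--Frobenius argument for cone-preserving linear maps produces a nonzero nef class $\eta$ with $\varphi^*\eta=\lambda_1(\varphi)\,\eta$. Using $\eta$, one forms the canonical height $\hat h_\eta(Q):=\lim_{n\to\infty}\lambda_1(\varphi)^{-n}h_\eta(\varphi^n(Q))$ by Tate's telescoping argument; the convergence of this limit and the functional equation $\hat h_\eta\circ\varphi=\lambda_1(\varphi)\,\hat h_\eta$ follow from the one-step estimate $\big|h_\eta(\varphi(Q))-\lambda_1(\varphi)h_\eta(Q)\big|=O\big(\sqrt{h_H(Q)}\big)$, for which nefness of $\eta$ is exactly what is needed. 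Granting the positivity statement $\hat h_\eta(P)>0$, one obtains $h_\eta(\varphi^n(P))\sim\lambda_1(\varphi)^n\,\hat h_\eta(P)$, and comparing $h_\eta$ with the ample height $h_H$ yields $\alpha_\varphi(P)\ge\lambda_1(\varphi)$, completing the proof.

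The main obstacle is precisely the positivity $\hat h_\eta(P)>0$, and it is here that density of the forward orbit must be used. The expected mechanism: the locus $\{Q:\hat h_\eta(Q)=0\}$ is $\varphi$-invariant, and one would like to show it is contained in a proper $\varphi$-invariant subvariety --- the stable base locus of $\eta$, or an exceptional locus produced by running a $\varphi$-equivariant minimal model program --- so that $\hat h_\eta(P)=0$ would force the forward orbit of $P$ into a proper subvariety, contradicting density. Carrying this out in general requires controlling $\eta$, its base-locus positivity, and equivariant MMP (Fano contractions, isogenies of abelian varieties, finite \'etale quotients) at the same time, and this is exactly the point at which the conjecture is currently open; it is known, for instance, when $X$ is an abelian variety or a smooth projective surface, and more generally when one can induct along an equivariant fibration whose base and relative dynamics are already understood --- which is the route taken in the present paper, where Corollary~\ref{cor:mainKSC} is deduced by combining Theorem~\ref{thm:mainSoftToric} with the known validity of the conjecture for toric morphisms on the base.
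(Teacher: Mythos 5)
The statement you were asked to prove is not a theorem of this paper: it is the Kawaguchi--Silverman conjecture itself, quoted verbatim from \cite[Conjecture 6]{kawaguchisilverman2016degrees}, and the paper offers no proof of it. What the paper actually proves is a special case, Corollary~\ref{cor:mainKSC}, obtained by a reduction-to-the-base argument: Proposition~\ref{cor:standard1} says that if $\lambda_1(\psi)=\lambda_1(\varphi)$ and KSC holds for the base map $\varphi$, then KSC holds for $\psi$; Theorem~\ref{thm:mainSoftToric} (via Corollary~\ref{cor:maincor}) supplies the equality of dynamical degrees, and KSC for toric morphisms on the base is imported from \cite[Theorem 4.1]{MR4070310}. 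So there is no ``paper proof'' to compare yours against, and any purported complete proof of the statement would be suspect.

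Your proposal, to its credit, does not claim completeness: the inequality $\alpha_\varphi(P)\leq\lambda_1(\varphi)$ is indeed a theorem (it is exactly \cite[Theorem~1.4]{matsuzawa2020bounds}, which the paper cites in Lemma~\ref{lem:nonjumping}), though your sketch of it glosses over the fact that the $O(1)$ error in $h_H(\varphi^n(P))=h_{(\varphi^n)^*H}(P)+O(1)$ depends on $n$, which is the actual technical content of that bound. The reverse inequality is where the conjecture lives, and the gap you name --- proving $\hat h_\eta(P)>0$ for the nef Perron--Frobenius eigenclass $\eta$ when the orbit of $P$ is Zariski dense --- is precisely the open problem; the zero locus of $\hat h_\eta$ need not be contained in any proper closed $\varphi$-invariant subset that one currently knows how to produce (e.g.\ $\eta$ may fail to be big, so its ``base locus'' can be all of $X$). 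In short: your outline is the standard strategy from the literature and is honestly flagged as incomplete, but as a proof of the stated conjecture it has a genuine, acknowledged gap, whereas the paper never attempts the general statement and instead proves only the fibration special case described above.
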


We attack Conjecture~\ref{conj:KSC} using the following well known result, which allows one to reduce the Kawaguchi-Silverman conjecture for a fibration, to the Kawaguchi-Silverman conjecture on the base of the fibration in certain conditions. 

\begin{proposition}[{\cite[Corollary 3.2]{lesieutresatriano2021ksc}\label{cor:standard1}}]
Suppose that we have a commuting diagram of normal projective varieties defined over $\Qbar$
    \[\xymatrix{X\ar[r]^\psi\ar[d]_\pi & X\ar[d]^\pi\\ Y\ar[r]_\varphi & Y}\]
     where $\psi,\varphi,\pi$ are all surjective morphisms. When the Kawaguchi--Silverman conjecture holds for $\varphi$ and $\lambda_1(\psi)=\lambda_1(\varphi)$, the Kawaguchi--Silverman conjecture holds for $\psi$. 
\end{proposition}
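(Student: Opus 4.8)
The plan is to reduce the Kawaguchi--Silverman conjecture for $\psi$ to the conjecture for $\varphi$, which is assumed, by using the fibration $\pi$ to transport both the Zariski-density hypothesis and the growth of heights from $X$ down to $Y$. Fix a point $P\in X(\Qbar)$ whose forward $\psi$-orbit $O_\psi(P)$ is Zariski dense in $X$; the goal is to show $\alpha_\psi(P)=\lambda_1(\psi)$. It is known (Kawaguchi--Silverman \cite{kawaguchisilverman2016degrees}) that $\alpha_\psi(P)\leq\lambda_1(\psi)$ for every point, so it suffices to prove $\alpha_\psi(P)\geq\lambda_1(\psi)$; and by hypothesis $\lambda_1(\psi)=\lambda_1(\varphi)$.

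First I would descend the density hypothesis. Since $\pi\circ\psi=\varphi\circ\pi$, we have $\pi(\psi^n(P))=\varphi^n(\pi(P))$ for all $n$, so $\pi(O_\psi(P))=O_\varphi(\pi(P))$; as $\pi$ is continuous and surjective,
\[
    Y=\pi(X)=\pi\bigl(\overline{O_\psi(P)}\bigr)\subseteq\overline{\pi(O_\psi(P))}=\overline{O_\varphi(\pi(P))}\subseteq Y,
\]
hence $\pi(P)$ has Zariski-dense $\varphi$-orbit in $Y$. Since the Kawaguchi--Silverman conjecture holds for $\varphi$, this yields $\alpha_\varphi(\pi(P))=\lambda_1(\varphi)$.

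Next I would compare arithmetic degrees across $\pi$. Fix an ample divisor $W$ on $Y$ and an ample divisor $A_0$ on $X$, and set $H:=A_0+\pi^*W$; this is ample because $\pi^*W$ is nef. Functoriality of the height machine gives $h_{\pi^*W}=h_W\circ\pi+O(1)$, and since $H-\pi^*W=A_0$ is ample, its height is bounded below, so $h_H\geq h_W\circ\pi+O(1)$. Evaluating at $\psi^n(P)$, using $\pi(\psi^n(P))=\varphi^n(\pi(P))$, and passing to $h^+=\max\{1,h\}$ via the elementary inequality $\max\{1,a-C\}\geq\max\{1,a\}-C$ for $C\geq0$, we obtain a constant $C$ independent of $n$ with
\[
    h^+_{H}\bigl(\psi^n(P)\bigr)\;\geq\;h^+_{W}\bigl(\varphi^n(\pi(P))\bigr)-C.
\]
Taking $n$-th roots and letting $n\to\infty$ gives $\alpha_\psi(P)\geq\alpha_\varphi(\pi(P))$: when $\alpha_\varphi(\pi(P))>1$ the right-hand side tends to infinity and the additive constant $C$ disappears in the limit of $n$-th roots, while if $\alpha_\varphi(\pi(P))=1$ the inequality is automatic since every arithmetic degree is at least $1$. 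Combining with the previous step, $\alpha_\psi(P)\geq\alpha_\varphi(\pi(P))=\lambda_1(\varphi)=\lambda_1(\psi)$, and together with the fundamental inequality this forces equality.

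Since every ingredient here is a standard property of the height machine and of arithmetic degrees, no genuine obstacle arises --- which is precisely why this is a corollary in \cite{lesieutresatriano2021ksc}. The only step that deserves a moment's care is the last one, checking that the additive constant and the passage to $h^+$ do not alter the resulting limit of $n$-th roots; this is a routine estimate once one observes that the orbit heights in question tend to infinity whenever $\lambda_1(\varphi)>1$.
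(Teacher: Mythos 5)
Your proof is correct, and it is essentially the standard argument: the paper itself does not prove this statement but imports it from Lesieutre--Satriano \cite[Corollary 3.2]{lesieutresatriano2021ksc}, whose proof proceeds exactly as you do --- descend Zariski-density of the orbit through the surjection $\pi$, use functoriality of heights for $H = A_0 + \pi^*W$ to get $\alpha_\psi(P) \geq \alpha_{\varphi}(\pi(P))$, and combine with the upper bound $\alpha_\psi(P) \leq \lambda_1(\psi)$ and the hypothesis $\lambda_1(\psi)=\lambda_1(\varphi)$. Your care with the additive $O(1)$ constant and the $h^+$ truncation in the limit of $n$-th roots is exactly the point that needs checking, and you handle it correctly.
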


\subsection{Toric geometry}


We will assume some familiarity with the basics of toric geometry. Here we briefly summarize the relevant definitions and facts, and refer the interested reader to \cite{FultonToric} or \cite{CLSToric} for detailed accounts

A toric variety is an irreducible normal algebraic variety $X$ containing an algebraic torus $T$ as a Zariski open subset, such that the action of $T$ on itself extends to an algebraic action of $T$ on $X$. We denote by $N=\operatorname{Hom}(\mathbb{G}_m,T)$ the lattice of one-parameter subgroups of $T$, and by $M=\operatorname{Hom}(T,\mathbb{G}_m)$ the character lattice of $T$. These lattices are dual via a natural pairing $\langle \cdot, \cdot \rangle: M \times N \to \mathbb{Z}$.

A strongly convex rational polyhedral cone $\sigma$ in $N_\mathbb{R} = N \otimes_\mathbb{Z} \mathbb{R}$ is the set of all nonnegative $\mathbb{R}$-linear combinations of finitely many integral vectors $v_1, \ldots, v_n \in N$, with the property that $\sigma$ contains no nonzero linear subspace of $N_\mathbb{R}$. We write $\sigma = \langle v_1, \ldots, v_n \rangle$ for the cone generated by these vectors. A fan $\Sigma$ in $N_\mathbb{R}$ is a finite collection of strongly convex rational polyhedral cones that is closed under taking faces and such that any two cones intersect along a common face. We denote by $\Sigma(k)$ the set of all $k$-dimensional cones in $\Sigma$.

There is a bijective correspondence between toric varieties and fans in $N_\mathbb{R}$. A fan $\Sigma$ is complete if the union of its cones equals $N_\mathbb{R}$, and smooth if every cone in $\Sigma$ is generated by part of a $\mathbb{Z}$-basis of $N$. A toric variety is complete (resp. smooth) if and only if its associated fan is complete (resp. smooth).

A toric variety is a disjoint union of its orbits by the action of its torus. There is a unique such orbit $O_\sigma$ for every cone $\sigma\in\Sigma$. This correspondence is inclusion-reversing, so that $O_\sigma\subseteq O_\tau$ if and only if $\tau$ is a face of $\sigma$ and, moreover, $\dim \sigma = \operatorname{codim} O_\sigma$. The same holds true upon taking the closure of the orbits of the action. In particular, the rays of $\Sigma$ correspond to $T$-invariant divisors, while maximal cones correspond to $T$-fixed points. Since every divisor on a toric variety is linearly equivalent to a torus-invariant divisor, both the Picard and class groups are generated by the $T$-invariant divisors.

The orbit decomposition induces an affine open cover of $X$. Each cone $\sigma\in\Sigma$ determines a $T$-invariant affine open subvariety $U_\sigma \subseteq X$ given by $U_\sigma = \bigcup_{\tau \subseteq \sigma} O_\tau$. Its coordinate ring is the semigroup algebra $k[\sigma^\vee \cap M]:=k[\,\chi^\mathbf{u}\,\vert\, \mathbf{u}\in\sigma^\vee\cap M]$, where $\sigma^\vee$ denotes the dual cone of $\sigma$.

For computational purposes, it is useful to describe divisors and their sections explicitly. The data of a $T$-invariant Cartier divisor on $X$ is equivalent to that of a continuous piecewise linear function $\varphi\colon \vert \Sigma\vert\ra \bR$ from the support of $\Sigma$. A $T$-invariant Cartier divisor $D$ also determines a rational convex polyhedron in $M_\mathbb{R}$, which is given by
\[
    P_D:=\{m\in M_\bR\colon \varphi(u)\leq \langle m,u\rangle\textnormal{ for all }u\in \vert\Sigma\vert\}.
\]
This polytope encodes the global sections of $\O(D)$:
\[
    H^0(X_\Sigma,\O(D))=\bigoplus_{m\in P_D\cap M}\bC\cdot \chi^m.
\]
A morphism of toric varieties $\varphi: X \to Y$ is a morphism such that $\varphi(T_X)\subseteq T_Y$ and $\varphi|_{T_X}$ is a group homomorphism. Toric maps correspond bijectively to lattice homomorphisms $\phi: N_X \to N_Y$ such that for every cone $\sigma \in \Sigma_X$, the image $\phi(\sigma)$ is contained in some cone of $\Sigma_Y$.

Of particular importance for us will be the following result. While this lemma is likely familiar to experts, we have been unable to locate a suitable reference and therefore include a proof.

\begin{lemma}\label{lemma:toric reduction to mult by d}
Let $X$ be a smooth complete toric variety with fan $\Sigma$ in the lattice $N$, and let $\varphi: X \to X$ be a surjective toric endomorphism. Denote by $\overline{\varphi}: N \to N$ the induced $\mathbb{Z}$-linear map on the lattice. Then, there exists an integer $m \geq 1$ such that either
\begin{enumerate}
\item $\overline{\varphi^m}=\overline{\varphi}^m = d \cdot \text{Id}_{N}$ for some positive integer $d$, or
\item $X$ decomposes as $X = X_1 \times \cdots \times X_t$, where each $X_i$ is an indecomposable toric variety with fan $\Sigma_i$ in the lattice $N_i$, and
\[
\overline{\varphi}^m = d_1 \cdot \text{Id}_{N_1} \times d_2 \cdot \text{Id}_{N_2} \times \cdots \times d_t \cdot \text{Id}_{N_t}
\]
for some positive integers $d_1, \ldots, d_t$, where $\Sigma = \Sigma_1 \times \cdots \times \Sigma_t$ and $N = N_1 \times \cdots \times N_t$.
\end{enumerate}
\end{lemma}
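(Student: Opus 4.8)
The plan is to analyze the linear map $\overline{\varphi}\colon N\to N$ using the fact that it must permute the structure induced by the fan $\Sigma$. Since $\varphi$ is a surjective toric endomorphism, $\overline{\varphi}$ is injective (hence an isomorphism over $\mathbb{Q}$), and for each cone $\sigma\in\Sigma$ the image $\overline{\varphi}(\sigma)$ lies in some cone of $\Sigma$. The first key observation is that $\overline{\varphi}$ must send rays to rays: a primitive generator $u_\rho$ of a ray $\rho\in\Sigma(1)$ maps to a lattice point inside some cone $\tau$, and because $X$ is smooth and complete one argues (using that the number of rays is finite and $\overline{\varphi}$ is invertible over $\mathbb{Q}$, so it cannot collapse dimension) that after replacing $\varphi$ by an iterate, $\overline{\varphi}$ induces a permutation of $\Sigma(1)$. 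Concretely, $\overline{\varphi}(u_\rho)=c_\rho u_{\rho'}$ for a positive integer $c_\rho$ and a bijection $\rho\mapsto\rho'$ of $\Sigma(1)$; passing to a further iterate makes this permutation trivial, so $\overline{\varphi}(u_\rho)=c_\rho u_\rho$ for every ray. This is the step I expect to be the main obstacle — showing rigidly that an iterate fixes each ray and scales it — and it should follow from a counting/pigeonhole argument on $\Sigma(1)$ combined with the observation that $\overline{\varphi}$ maps the star of a cone into the star of its image.

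Once $\overline{\varphi}$ is diagonalized in the sense that each primitive ray generator $u_\rho$ is an eigenvector with positive integer eigenvalue $c_\rho$, the task reduces to showing that the $c_\rho$ can only vary across a product decomposition of $X$. Here I would use smoothness: for a maximal cone $\sigma=\langle u_{\rho_1},\ldots,u_{\rho_n}\rangle$, the generators $u_{\rho_1},\ldots,u_{\rho_n}$ form a $\mathbb{Z}$-basis of $N$, and $\overline{\varphi}$ is diagonal in that basis with entries $c_{\rho_1},\ldots,c_{\rho_n}$. Now introduce the equivalence relation on $\Sigma(1)$ generated by $\rho\sim\rho'$ whenever $\rho$ and $\rho'$ both belong to some common cone of $\Sigma$. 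I claim $c_\rho=c_{\rho'}$ whenever $\rho\sim\rho'$: if $\rho,\rho'$ lie in a common cone $\sigma$, then any cone containing $\sigma$ (in particular a maximal one) has them among its generators, and the constraint that $\overline{\varphi}$ sends every cone into a cone of $\Sigma$, together with the eigenvector structure, forces the scalars to match — otherwise the image of a two-dimensional face spanned by $u_\rho,u_{\rho'}$ would be a cone $\langle c_\rho u_\rho, c_{\rho'} u_{\rho'}\rangle$ that is not a cone of $\Sigma$ unless $c_\rho=c_{\rho'}$ (one checks the latter using that the only cones of a smooth fan containing $u_\rho,u_{\rho'}$ as part of their generators have those as primitive generators). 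Actually the cleanest route is: the set of rays is partitioned into equivalence classes under $\sim$, these classes biject with the indecomposable factors $X_1,\ldots,X_t$ in the product decomposition $X=X_1\times\cdots\times X_t$, $\Sigma=\Sigma_1\times\cdots\times\Sigma_t$ (this is the standard characterization of indecomposability of a smooth complete toric variety: $X$ is indecomposable iff $\Sigma(1)$ is a single equivalence class), and $c_\rho$ is constant on each class.

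To assemble the argument: after passing to the iterate $\varphi^m$ from the first step, write $N=N_1\times\cdots\times N_t$ and $\Sigma=\Sigma_1\times\cdots\times\Sigma_t$ according to the connected components of the incidence graph on $\Sigma(1)$. Each $N_i$ is spanned by the $u_\rho$ with $\rho$ in the $i$-th class, $\overline{\varphi^m}$ preserves each $N_i$ (since it scales each such $u_\rho$), and on $N_i$ it acts by the scalar $d_i:=c_\rho$ (any $\rho$ in class $i$), which is well-defined by the equality $c_\rho=c_{\rho'}$ within a class. This is exactly conclusion (2), and conclusion (1) is the special case $t=1$. If one wishes to state (1) as a genuine alternative, note it holds precisely when the incidence graph is connected, i.e. $X$ is indecomposable; otherwise (2) applies with $t\geq 2$, and the two cases are not mutually exclusive only in that a product of copies of the same factor with the same $d_i$ lands in both — which is harmless. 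The routine parts I would not belabor are: the standard fact that a smooth complete toric variety decomposes as a product according to the connected components of its ray-incidence graph; the equivalence between toric morphisms and the lattice maps $\overline{\varphi}$; and the elementary linear algebra identifying $\overline{\varphi^m}$ with $\overline{\varphi}^m$.
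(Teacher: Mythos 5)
Your first step (after replacing $\varphi$ by an iterate, $\overline{\varphi}$ scales each primitive ray generator by a positive integer $c_\rho$) is essentially the paper's: the paper gets it from the observation that a surjective $\overline{\varphi}$ induces a dimension-preserving bijection on the cones of $\Sigma$, which is the counting argument you sketch. The second half of your argument, however, has a genuine error. Your key claim --- that $c_\rho=c_{\rho'}$ whenever $\rho$ and $\rho'$ lie in a common cone --- is false, and the argument offered for it does not work: the image of the two-dimensional face $\langle u_\rho,u_{\rho'}\rangle$ is the cone generated by $c_\rho u_\rho$ and $c_{\rho'}u_{\rho'}$, which is \emph{the same cone} $\langle u_\rho,u_{\rho'}\rangle$ regardless of the scalars, since rescaling generators by positive numbers does not change a cone; so no conflict with the fan structure ever arises. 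A concrete counterexample is $X=\mathbb{P}^1\times\mathbb{P}^1$ with $\overline{\varphi}=\operatorname{diag}(2,3)$: the rays $e_1$ and $e_2$ span a cone of $\Sigma$ but carry different scalars. The same example refutes your ``standard characterization of indecomposability'': in any product fan the maximal cones contain rays from every factor, so the graph whose edges are ``lie in a common cone'' is connected for $\mathbb{P}^1\times\mathbb{P}^1$, which is decomposable. Hence your equivalence classes neither force equal scalars nor correspond to the indecomposable factors.

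The actual content of the lemma, which your proposal skips, runs in the opposite direction: one does not start from a decomposition and show the scalars are constant on it; one starts from the scalars and \emph{builds} the decomposition. The paper groups the rays by eigenvalue, lets $N_i$ be the eigenspace (equivalently, the span of the rays with scalar $d_i$), sets $\Sigma_i=\{\zeta\in\Sigma:\zeta\subseteq (N_i)_\mathbb{R}\}$, and then proves $\Sigma=\Sigma_1\times\cdots\times\Sigma_t$. Smoothness enters through simpliciality --- the generators of any cone split into eigen-groups, and each subset of generators spans a face, so every cone is a Minkowski sum of cones $\sigma_i\in\Sigma_i$ --- and completeness is used for the converse inclusion: a point in the relative interior of $\sigma_1+\cdots+\sigma_t$ lies in the relative interior of a unique cone of $\Sigma$, and comparing eigencomponents forces that cone to be exactly $\sigma_1+\cdots+\sigma_t$. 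To repair your write-up you would need to replace the incidence-graph step with an argument of this kind; as written, the second half of the proof does not go through.
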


\begin{proof}
    The map $\varphi:X\to X$ is surjective if and only if its induced map $\overline{\varphi}:N\to N$ is surjective \cite[Proposition~2.1.4 and Lemma 2.1.6]{hu2000toric}. Since $\overline{\varphi}$ is a $\mathbb{Z}$-linear map, then $\dim\overline{\varphi}(\sigma)\leq\dim\sigma$ for every cone $\sigma\in\Sigma$. Therefore, since $\overline{\varphi}$ is surjective, it defines a dimension-preserving bijection among the cones in $\Sigma$. 

    By the previous paragraph, there exists a sufficiently divisible integer $m$ such that $\overline{\varphi}^m(v)=d v$ for all primitive generators of the one-dimensional cones in $\Sigma$, and some positive scalar $d$ depending on $v$. If this scalar is the same for every ray, then $\overline{\varphi}^m$ is a multiple of the identity and claim (1) in the statement follows.

    Let us then suppose that $\overline{\varphi}^m$ has at least two distinct eigenvalues, say $d_1,\dots,d_t$. Let $N_1,\dots,N_t\subseteq N$ be the eigenspaces corresponding to the eigenvalues $d_1,\dots,d_t$ of $\overline{\varphi}^m$, so that $N=N_1\times\cdots\times N_t$. For each eigenvalue $d_i$ define the following subfan of $\Sigma$:
    \[
        \Sigma_{i} = \{\zeta\in\Sigma\,\vert\,\zeta\subseteq (N_{i})_\mathbb{R}\}.
    \]
    We claim that $\Sigma = \Sigma_{1}\times\cdots\times\Sigma_{t}$, that is, given cones $\sigma_i\in\Sigma_i$ for all $i=1,\dots,t$, the Minkowski sum $\sigma=\sigma_1+\cdots+\sigma_t$ is an element of $\Sigma$, and every cone of $\Sigma$ arises this way.

    Consider a cone $\sigma\in\Sigma$ and let $S=\{\mathbf{v}_1,\dots,\mathbf{v}_n\}$ be the set of all primitive generators of its rays. Then, the sets $S_i=\{\mathbf{v}\in S\,\vert\, \mathbf{v}\in N_i\}$ define a nontrivial partition of $S$. Since $\sigma$ is simplicial, the cone generated by any subset of $S$ defines a face of $\sigma$. In particular, the cone $\sigma_i:=\langle \mathbf{v}\,\vert\, \mathbf{v}\in S_i\rangle\in\Sigma_i$ is a face of $\sigma$ for all $i=1,\dots,t$. It follows that $\sigma = \sigma_1+\cdots+\sigma_t$.

    Conversely, consider cones $\sigma_i\in\Sigma_i$ for $i=1,\dots,t$, not all of which are the zero cone. We must show that the cone $\sigma_1+\cdots+\sigma_t$ is in $\Sigma$. Consider a nonzero vector $\mathbf{v}\in N_\mathbb{R}$ in the relative interior of $\sigma_1+\cdots+\sigma_t$. Since $\Sigma$ is complete, every vector in $N_\mathbb{R}$ lies in the relative interior of a unique cone of $\Sigma$. In particular, there exists a unique cone $\sigma\in\Sigma$ such that $v$ lies in its relative interior. As explained in the previous paragraph, there exist unique cones $\sigma_i'\in\Sigma_i$ for all $i=1,\dots,t$ such that $\sigma = \sigma_1'+\cdots+\sigma_t'$. Then, there exist some strictly positive constants $c_i,c_i'\in\mathbb{R}$ such that
    \[
        \mathbf{v} = c_1\mathbf{v}_1+\cdots+c_t\mathbf{v}_t = c_1'\mathbf{v}_1'+\cdots+c_t'\mathbf{v}_t',
    \]
    where $\mathbf{v}_i\in\operatorname{rel.int.}(\sigma_i)$ and $\mathbf{v}_i'\in\operatorname{rel.int.}(\sigma_i')$.

    For each $i=1,\dots,t$, the vector $c_i\mathbf{v}_i-c_i'\mathbf{v}_i'$ is an eigenvector of $\overline{\varphi}^m$ with eigenvalue $d_i$. Since eigenvectors corresponding to different eigenvalues are linearly independent, we must have $c_i\mathbf{v}_i-c_i'\mathbf{v}_i'=0$ for all $i$. This implies that both $\mathbf{v}_i$ and $\mathbf{v}_i'$ lie in the relative interior of the same cone in $\Sigma$. Given that $\mathbf{v}_i\in\operatorname{rel.int.}(\sigma_i)$ and $\mathbf{v}_i'\in\operatorname{rel.int.}(\sigma_i')$, we conclude that $\sigma_i=\sigma_i'$ for all $i=1,\dots,t$. It follows that $\sigma=\sigma_1+\cdots+\sigma_t\in\Sigma$, completing the proof.   
\end{proof}  

\begin{remark}
    Consider a toric variety $X=X_\Sigma$ with fan $\Sigma$ in $N$, and a surjective toric morphism $\varphi:X\to X$ such that $\overline{\varphi}=d\cdot Id_N$. Let $\sigma\in\Sigma$ be a maximal cone and $U_\sigma$ its corresponding invariant affine open set. Then, $\varphi|_{U_\sigma}$ is also a surjective toric map, and its corresponding ring homomorphism maps $\chi^\mathbf{u}$ to $\chi^{d\mathbf{u}}$ for every $\mathbf{u}\in\sigma^\vee\cap M$.
\end{remark}

\begin{definition}
\label{def: mult by d map}
    Given a toric variety $X$ and a positive integer $d$, we denote by $[d]:X\to X$ the surjective toric endomorphism corresponding to the lattice map $d\cdot Id:N\to N$. We refer to this map as either the multiplication by $d$ map on $X$, or the $d$\textsuperscript{th} Frobenius map on $X$.
\end{definition}

A \emph{toric vector bundle} on a toric variety $X$ with torus $T$ is a vector bundle $\pi:\E\to X$ endowed with a $T$-action that is linear on the fibres and such that $\pi$ is equivariant. In general, neither $\E$ nor $\bP(\E)$ need be toric varieties, even if $\E$ is a toric vector bundle.

Toric vector bundles were first classified by Kaneyama in \cite{kaneyama1975equivariant} using their splitting behavior over affine opens, involving both combinatorial and linear algebraic data. Klyachko later provided an alternative classification via $\mathbb{Z}$-graded filtrations of finite-dimensional $k$-vector spaces satisfying certain compatibility conditions \cite{klyachko1990equivariant}. For further details on the history of the subject see \cite[Section~2.4]{payne2008moduli}.

The tangent and cotangent bundles of a smooth toric variety are both toric vector bundles over it. As stated above, every line bundle is also endowed with the structure of a toric bundle. It follows that a decomposable vector bundle given by the sum of toric line bundles is also a toric vector bundle. In fact, a vector bundle over a toric variety has total space that is a toric variety if and only if it decomposes as a sum of line bundles. Similarly, if $\E$ decomposes as a sum of line bundles, then $\bP(\E)$ is a toric variety. For more details on these results see \cite[Section~7.3]{CLSToric}.

\section{Properties of Based Maps}
\label{sec:based-maps}
In this section we recall the definition for based morphisms of projective bundles and study the dynamical properties of these morphisms as well as some of their consequences. We begin with the definition.

\begin{definition}
\label{defn:based-maps}
Let $X$ be a smooth projective variety and fix a vector bundle $\E$ on $X$ with projection to the base denoted $\pi \colon \E \rightarrow X$. Let $\psi \colon \PP(\E) \rightarrow \PP(\E)$ be a surjective endomorphism. We say $\psi$ is \textit{based} if there exists a surjective morphism $\varphi \colon X \rightarrow X$ such that $\varphi \circ \pi = \psi \circ \pi$, that is, making the following diagram commute: 
\begin{equation}
\label{eq:dagger2}
\begin{tikzcd}[ampersand replacement = \&]
\bP(\E) \arrow[r, "\psi"] \arrow[d, "\pi" left] \& \bP(\E) \arrow[d, "\pi" right] \\
X \arrow[r, "\varphi" below] \& X.
\end{tikzcd}
\tag{$\dagger$}
\end{equation}
We call $\varphi$ the \textit{base map}. There is an integer $d$ and line bundle $\B$ on $X$ such that $\psi^* \O_{\PP(\E)}(1) = \O_{\PP(\E)}(d) \otimes \pi^* \B$. We refer to $d$ as the \textit{degree of $\psi$ on the fibres of $\pi$} or the \textit{relative (dynamical) degree}.

If $X$ is a toric variety, then we say $\psi$ is \textit{base-toric} if the morphism $\varphi$ can be taken to be a toric morphism.
\end{definition}

For every surjective endomorphism $\psi \colon \PP(\E) \rightarrow \PP(\E)$, there is an integer $m \geq 1$ such that the iterate $\psi^m$ is based by \cite[Lemma~6.9]{lesieutresatriano2021ksc}. If we have the diagram \eqref{eq:dagger2}, then for any closed point $x \in X$ we automatically obtain $\psi(\pi^{-1}(x))\subseteq \pi^{-1}(\varphi(x))$. As each fibre is isomorphic to $\PP^n$, this is in fact an equality. In other words, we may think of $\psi$ as a family of maps $\psi\vert_{\pi^{-1}(x)} \colon \PP^r \rightarrow \PP^r$ parametrized by $X$. 

Our central result in this section centres around controlling the dynamics of based maps. Crucial to this control is how our bundle splits on the irreducible (and particularly rational) curves of the base variety.

\begin{definition}
\label{defn:projectively-trivial}
Let $X$ be a smooth projective variety, fix a vector bundle $\E$ on $X$, and suppose $C$ is a closed irreducible curve in $X$. Denote $\E\vert_C$ as the pull-back of $\E$ to the normalization of $C$.
\begin{enumerate}
\item We say that $\E$ is projectively trivial if $\PP(\E) \cong X \times \PP^{\rank(\E)-1}$. Equivalently $\E \cong \L^{\oplus r}$ for some line bundle $\L$ on $X$.
\item We say that $\E$ is projectively trivial over $C$ if $\E\vert_C$ is projectively trivial.
\item When $\varphi \colon X \rightarrow X$ is a morphism, we say $C$ is \textit{periodic for $\varphi$} if there exists an integer $k \geq 1$ such that $g^k(C) = C$.
\end{enumerate}
\end{definition}

We may now state the main result of this section.

\begin{theorem}
\label{thm:basedmaps-plus-projtrivial}
Let $X$ be a normal projective variety and fix a vector bundle $\E$ on $X$. Suppose $\psi \colon \PP(\E) \rightarrow \PP(\E)$ is a based map with base map $\varphi \colon X \rightarrow X$. If there exists an irreducible rational curve $C \subseteq X$ such that $C$ is periodic for $\varphi$ and $\E$ is not projectively trivial over $C$, then we have $\lambda_1(\psi) = \lambda_1(\varphi)$.
\end{theorem}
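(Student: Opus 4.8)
The plan is to reduce to the fibre dynamics and exploit the restriction of $\psi$ to the projective bundle over the periodic rational curve $C$. Since $\lambda_1(\psi) = \max\{\lambda_1(\varphi), \lambda_1(\psi\vert_\pi)\}$, it suffices to show $\lambda_1(\psi\vert_\pi) \leq \lambda_1(\varphi)$; equivalently, by Proposition~\ref{prop:fibreProp2}, that the relative degree $d$ of $\psi$ on the fibres of $\pi$ is at most $\lambda_1(\varphi)$. First I would replace $\psi$ and $\varphi$ by the iterate $\psi^k$, where $g^k(C)=C$, noting that relative degrees multiply under iteration ($d \mapsto d^k$) while $\lambda_1(\varphi)\mapsto \lambda_1(\varphi)^k$, so the desired inequality is unchanged; thus we may assume $\varphi(C) = C$ (up to passing to the normalization, on which $\varphi$ induces a surjective endomorphism $\varphi_C \colon \tilde C \to \tilde C$).

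Next I would restrict the diagram \eqref{eq:dagger2} to $C$: pulling back along $\tilde C \hookrightarrow C \subseteq X$ gives a commutative square with $\bP(\E\vert_C) \to \bP(\E\vert_C)$ over $\varphi_C \colon \tilde C \to \tilde C$, and the restricted map $\psi_C$ is still surjective with the same relative degree $d$, since $\psi_C^*\O_{\bP(\E\vert_C)}(1) = \O_{\bP(\E\vert_C)}(d)\otimes \pi_C^*\B\vert_{\tilde C}$ by Proposition~\ref{prop:fibreProp} (restriction commutes with the relevant pullbacks, and the relative $\O(1)$ restricts correctly). Now $\tilde C \cong \bP^1$ and, by hypothesis, $\E\vert_C$ is a bundle on $\bP^1$ that is not projectively trivial. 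By Grothendieck's theorem $\E\vert_C \cong \bigoplus_{i=0}^r \O_{\bP^1}(a_i)$ with not all $a_i$ equal, so (twisting, which does not change $\bP(\E\vert_C)$) we may assume $0 = a_0 \leq a_1 \leq \cdots \leq a_r$ with $a_r > 0$. Then $\bP(\E\vert_C)$ is a projective toric surface-or-higher-dimensional toric variety, and we are reduced to a concrete question on $\bP^1$.

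The core of the argument is then to bound the relative degree of a based endomorphism of $\bP(\bigoplus \O_{\bP^1}(a_i))$ with non-constant splitting type, over the base map $z \mapsto z^e$ on $\bP^1$ (after a further iterate, by Lemma~\ref{lemma:toric reduction to mult by d}, since $\bP^1$ is indecomposable and every surjective toric endomorphism of $\bP^1$ becomes $[e]$ after iteration) — here $e = \lambda_1(\varphi_C) \leq \lambda_1(\varphi)$. I would invoke Theorem~\ref{thm:mainLineBundle}: the map $\psi_C$ corresponds to sections $s_{i,\lambda} \in H^0(\bP^1, \L^\lambda \otimes \varphi_C^*\L_i^{-1})$ where $\L_i = \O_{\bP^1}(a_i)$, so $\L^\lambda \otimes \varphi_C^*\L_i^{-1} = \O_{\bP^1}\bigl(\sum_j a_j\lambda_j - e\,a_i\bigr)$. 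For the fibrewise maps to be well-defined endomorphisms of $\bP^r$ at every point, the section $s_{r,\lambda}$ attached to the "top" coordinate $z_r$ and the monomial $\mathbf z^\lambda$ cannot all vanish, which forces a monomial $\lambda$ with $\sum_j a_j\lambda_j - e\,a_r \geq 0$; combined with $|\lambda| = d$ and $a_j \leq a_r$ for all $j$ one gets $d\,a_r \geq \sum_j a_j \lambda_j \geq \cdots$ — the inequality I actually want comes out the other way, so the real content is a base-point-freeness/positivity bookkeeping that pins down $d \leq e$.

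The main obstacle, and where I would spend the most care, is exactly this last combinatorial step: extracting $d \le e$ from the non-triviality of the splitting type together with the constraint that the $F_i$ define a morphism of $\bP^r$ fibrewise (equivalently, that the base locus of the linear system is empty on each fibre). The subtlety is that a priori the sections could conspire so that, although no single $s_{i,\lambda}$ is forced to have negative degree, the interplay between the ordering $a_0 \le \dots \le a_r$ and the requirement that the $r+1$ forms have no common zero on $\bP^r$ bounds $d$; I would organize this by looking at the induced filtration of $\E\vert_C$ coming from the flag of subbundles $\bigoplus_{i \le k}\O(a_i)$, tracking which of these sub-projective-bundles (loci like $\{z_{k+1} = \dots = z_r = 0\}$) must be preserved by $\psi_C$, and reading off degree constraints from the fact that $\psi_C$ maps the "negative" sub-locus into itself. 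This mirrors the Hirzebruch-surface computation in the Corollary following Theorem~\ref{thm:mainLineBundle}, where totally-ramified behavior at $\infty$ appears, and I expect the general bound $d \le e$ to follow by the same mechanism once the flag structure is made explicit.
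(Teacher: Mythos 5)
Your reduction steps (passing to an iterate so that $\varphi(C)=C$, restricting the diagram \eqref{eq:dagger2} to the normalization $C\cong\PP^1$, splitting $\E\vert_C$ by Grothendieck and twisting so the splitting type starts at $0$ and is non-constant) are exactly the paper's, and the final appeal to the product formula $\lambda_1(\psi)=\max\{\lambda_1(\varphi),\lambda_1(\psi\vert_\pi)\}$ is also the same. But the heart of the theorem is precisely the step you leave open: showing that the relative degree $d$ is at most (in fact equal to) the degree $e$ of $\varphi\vert_C$. As you yourself note, the naive bookkeeping through Theorem~\ref{thm:mainLineBundle} goes the wrong way: the existence of a nonzero section $s_{r,\lambda}\in H^0(\PP^1,\O(\sum_j a_j\lambda_j-ea_r))$ for the top coordinate only forces $d\geq e$, and extracting $d\leq e$ from the no-common-zero condition requires a genuine argument (in rank two one can do it by a resultant-degree computation, and in higher rank the flag-tracking you gesture at would have to be carried out in detail). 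Since you explicitly defer this to an expectation, the proposal has a gap at the decisive point. The paper closes it by a completely different mechanism: Lemma~\ref{lem:pullback-is-constant} shows that $\psi^*$ acting on the two-dimensional space $N^1\bigl(\PP(\E\vert_C)\bigr)$ is scalar multiplication by $e$, using Lemma~\ref{lem:positivity-of-bundles} (for non-constant splitting type, $\O_{\PP(\E\vert_C)}(1)$ is big and nef but not ample, so it spans a ray of the nef cone that is not a ray of the pseudoeffective cone, while $\pi^*\O_{\PP^1}(1)$ spans a ray of both) together with the fact that the finite surjection $\psi$ preserves ampleness and bigness under pullback; this immediately pins the fibre degree to $e$ with no section-level combinatorics.

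Two smaller points. First, you assert $\lambda_1(\varphi\vert_C)\leq\lambda_1(\varphi)$ in passing; this needs justification, and the paper proves it as Lemma~\ref{lem:nonjumping} via arithmetic degrees and the Kawaguchi--Silverman conjecture on the invariant curve (the remark after that lemma shows the inequality genuinely fails for non-periodic curves, so periodicity must enter the argument somewhere). Second, your appeal to Lemma~\ref{lemma:toric reduction to mult by d} to replace $\varphi\vert_C$ by a Frobenius map is unfounded, since $\varphi\vert_C$ is an arbitrary degree-$e$ endomorphism of $\PP^1$ and need not be toric; this is harmless for your purposes (on $\PP^1$ the pullback $\varphi_C^*\O(a)\cong\O(ea)$ depends only on the degree), but the citation should be dropped.
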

As a corollary, we obtain the Kawaguchi--Silverman conjecture holds for $\bP(\E)$. Before proving the theorem, we need to establish several technical results---the key to the proof lies in showing that the induced map $\psi^* \colon N^1(\PP(\E\vert_C))\to N^1(\PP(\E\vert_C))$ is a constant linear map. To this end, we first recall some positivity properties of vector bundles.

\begin{definition}
\label{defn:vector-bundle-positivity}
Let $X$ be a normal projective variety and $\E$ a vector bundle on $X$. We say that $\E$ is ample, nef or big whenever the line bundle $\O_{\PP(\E)}(1)$ on $\PP(\E)$ possesses the respective property.
\end{definition}

We need to understand the nef cone of a projective bundle on a rational curve, i.e. on $\PP^1$. The first step is understanding the positivity of $\O_{\PP(\E)}(1)$ over $\PP^1$, something one gleans from the more general result on projective space. See \cite{MR2207204} for some related results from a different perspective.
\begin{lemma}
\label{lem:positivity-of-bundles}
Consider the vector bundle $\E = \bigoplus_{i=1}^r \O_{\PP^n}(d_i)$ over $\PP^n$, where $d_0 \leq d_1 \cdots \leq d_r\in \bZ$ are such that $d_0 = 0$ and some $d_i > 0$. We have that $\E$ is nef and big, but not ample.
\end{lemma}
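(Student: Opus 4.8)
The plan is to work directly from the definition: $\E$ is nef (resp.\ big, ample) precisely when $\O_{\PP(\E)}(1)$ is nef (resp.\ big, ample). Since $\E = \bigoplus_{i=0}^r \O_{\PP^n}(d_i)$ with $d_0 = 0 \leq d_1 \leq \cdots \leq d_r$ and some $d_i > 0$, I will exhibit $\O_{\PP(\E)}(1)$ as a nef but non-ample line bundle whose top self-intersection is positive.

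\textbf{Nefness.} First I would observe that each summand $\O_{\PP^n}(d_i)$ is nef, since $d_i \geq 0$. A direct sum of nef line bundles is a nef vector bundle: this is standard (e.g.\ via the fact that $\PP(\E)$ for a split bundle has a filtration, or more simply because $\O_{\PP(\E)}(1)$ restricted to each $\PP(\O(d_i)) \hookrightarrow \PP(\E)$ subvariety is nef and these, together with fibres of $\pi$, generate enough curve classes—alternatively cite Lazarsfeld's \emph{Positivity in Algebraic Geometry}, where nefness of $\bigoplus \L_i$ for nef $\L_i$ is immediate). So $\O_{\PP(\E)}(1)$ is nef, hence $\E$ is nef. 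In particular all intersection numbers of $\O_{\PP(\E)}(1)$ with effective cycles are $\geq 0$.

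\textbf{Bigness.} Let $\xi = c_1(\O_{\PP(\E)}(1))$ and let $h = c_1(\pi^*\O_{\PP^n}(1))$; then $\PP(\E)$ has dimension $n + r$. Using the Grothendieck relation $\sum_{j=0}^{r}(-1)^j \pi^*c_j(\E)\,\xi^{r-j} = 0$ together with $c_j(\E) = e_j(d_0,\dots,d_r) h^j$ (elementary symmetric polynomials), I can push forward $\xi^{n+r}$ to a point: $\pi_*(\xi^{r+k}) = p_k(d_0,\dots,d_r)\,h^k$ for the complete homogeneous symmetric polynomial $p_k$, so $\xi^{n+r} = p_n(d_0,\dots,d_r)$. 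Since all $d_i \geq 0$ and at least one $d_i > 0$, the complete homogeneous symmetric polynomial $p_n$ evaluated at these nonnegative integers is strictly positive (it contains the monomial $d_i^n$ with coefficient $1$). A nef line bundle with positive top self-intersection is big, so $\O_{\PP(\E)}(1)$ is big, hence $\E$ is big.

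\textbf{Non-ampleness.} Because $d_0 = 0$, the summand $\O_{\PP^n}(0) = \O_{\PP^n}$ gives a quotient $\E \twoheadrightarrow \O_{\PP^n}$, hence a section $X = \PP^n \hookrightarrow \PP(\E)$ (a copy of $\PP^n$ sitting inside $\PP(\E)$) along which $\O_{\PP(\E)}(1)$ restricts to $\O_{\PP^n}$, which is not ample. Restriction of an ample line bundle to a closed subvariety is ample, so $\O_{\PP(\E)}(1)$ cannot be ample; equivalently, any line $\ell$ in this $\PP^n$-section has $\xi \cdot \ell = 0$, so $\xi$ is not ample by Nakai--Moishezon or Kleiman. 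Therefore $\E$ is not ample. I would note that this also matches the general principle that $\bigoplus \O(d_i)$ is ample iff all $d_i > 0$.

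The only mild subtlety—the "main obstacle" such as it is—is justifying nefness of a direct sum of nef line bundles and computing $\xi^{n+r}$ cleanly; both are standard but it is worth being careful to invoke the correct convention for $\O_{\PP(\E)}(1)$ (the quotient convention from \cite[Tag 01OB]{stacks-project} fixed earlier in the paper) so that the Grothendieck relation and the segre-class pushforward formulas carry the right signs. Everything else is a direct consequence of the definition of positivity for vector bundles.
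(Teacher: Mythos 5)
Your proof is correct, and while the nefness and non-ampleness parts run essentially parallel to the paper's (direct-sum criteria from Lazarsfeld; the trivial summand $\O_{\PP^n}$ killing ampleness --- you via the section on which $\O_{\PP(\E)}(1)$ restricts trivially, the paper via the ample-iff-all-summands-ample criterion), your bigness argument takes a genuinely different route. The paper first reduces to the auxiliary bundle $\widetilde{\E}=\O_{\PP^n}^{\oplus r-1}\oplus\O_{\PP^n}(1)$ using the comparison $\dim H^0(\Sym^d\E)\geq\dim H^0(\Sym^d\widetilde{\E})$, and then only needs the Chow ring of $\PP(\widetilde{\E})$, where the single nonzero Chern class makes the top power computation one line; you instead compute the top self-intersection of $\O_{\PP(\E)}(1)$ on $\E$ itself, pushing forward via the Grothendieck relation to get the complete homogeneous symmetric polynomial $h_n(d_0,\dots,d_r)>0$, and invoke ``nef with positive top self-intersection implies big'' (Lazarsfeld, Theorem~2.2.16) --- the same criterion the paper uses implicitly for $\widetilde{\E}$. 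Your route buys an explicit formula and avoids the auxiliary bundle and the section-count inequality, at the price of needing the Segre-class pushforward with the correct (quotient) convention, a point you rightly flag: with rank $\rank\E=r+1$ your displayed Grothendieck relation should read $\sum_{j=0}^{r+1}(-1)^j\pi^*c_j(\E)\,\xi^{r+1-j}=0$ (a harmless off-by-one inherited from the ambiguous indexing in the statement itself), and the pushforward $\pi_*\xi^{\rank\E-1+k}=h_k(d_0,\dots,d_r)h^k$ then gives exactly your claim. Neither slip affects the conclusion.
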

\begin{proof}
The vector bundle $\E$ is ample (nef) if and only if each of its summands is ample (nef) by \cite[Proposition~6.1.13 and Theorem~6.2.12]{Lazarsfeld-PosII}. Therefore, our assumption on $\E$ tells us it is nef but not ample. It remains to show that that $\E$ is big. This is equivalent to the statement that
\begin{equation*}
    \dim H^0 \bigl( \PP(\E), \O_{\PP(\E)}(d) \bigr) = \dim H^0(\PP^n,\Sym^d \E)\geq C  d^{r-1+n}
\end{equation*}
for some constant $C > 0$ and all integers $d\gg 1$. Let $\widetilde{\E}=\O_{\PP^n}^{\oplus r-1} \oplus \O_{\PP^n}(1)$. It suffices to show that $\widetilde{\E}$ is big since
\begin{equation*}
    \dim H^0(\PP^n, \Sym^d \E) \geq \dim H^0(\PP^n, \Sym^d \widetilde{\E}).
\end{equation*}
By definition, the bigness of $\widetilde{\E}$ over $\PP^n$ is equivalent to that one of $\O_{\PP(\widetilde{\E})}(1)$ over $\PP(\widetilde{\E})$. Define $\zeta \in A \bigl( \PP(\widetilde{\E}) \bigr)$ to be the first Chern class $c_1(\O_{\PP(\widetilde{\E})}(1))$ in the Chow ring. We already know that $\O_{\PP(\widetilde{\E})}(1)$ is nef, so it suffices to show that $\zeta^{\dim \PP(\widetilde{\E})} = \zeta^{r+n-1}$ is non-zero.

Let $\widetilde{\pi} \colon \PP(\widetilde{\E}) \rightarrow \PP^n$ be the projection to the base. Then the pullback $\widetilde{\pi}^* \colon A(\PP^n) \rightarrow A \bigl( \PP(\widetilde{\E}) \bigr)$ defines an injection and from \cite[Theorem~9.6]{eisenbud20163264} it follows that $\widetilde{\pi}$ induces an isomorphism
\begin{equation*}
    A \bigl( \PP(\widetilde{\E}) \bigr) \cong \frac{A(\PP^n)[\zeta]}{\langle \zeta^r + c_1(\widetilde{\E})\zeta^{r-1} \rangle}.
\end{equation*}
Therefore $\zeta^{r} = -c_1(\widetilde{\E})\zeta^{r-1}\in A \bigl( \PP(\widetilde{\E}) \bigr)$, and from this we can deduce that
\begin{equation*}
    \zeta^{r+n-1} = -c_1(\widetilde{\E})\cdot\zeta^{r+n-2} = \cdots = \bigl( -c_1(\widetilde{\E}) \bigr)^n \cdot \zeta^{r-1},
\end{equation*}
Now we know $c_1(\widetilde{\E})=\widetilde{\pi}^*\Bigl(c_1 \bigl( \O_{\PP^n}(1) \bigr) \Bigr)\neq 0$, so this product is non-zero by the presentation of $A \bigl( \PP(\widetilde{\E}) \bigr)$ above. Hence $\O_{\bP(\widetilde{\E})}(1)$ is a nef vector bundle with non-vanishing top intersection power, which implies that $\widetilde{\E}$ is big, and so we conclude that $\E$ is big as well.
\end{proof}

This allows us to understand the pullbacks of morphisms $\psi \colon \PP(\E\vert_C) \rightarrow \PP(\E\vert_C)$.

\begin{lemma}
\label{lem:pullback-is-constant}
Consider the vector bundle $\E = \bigoplus_{i=1}^r\O_{\PP^n}(d_i)$ on $\PP^n$, where the integers $d_0\leq d_1 \leq \cdots\leq d_r$ are such that $d_0 = 0$ and some $d_i > 0$. Suppose $\psi \colon \PP(\E) \rightarrow \PP(\E)$ is a based morphism with base morphism $\varphi$ and projection $\pi \colon \PP(\E) \rightarrow \PP^n$. Then, we have that $\psi^* \colon N^1 \bigl( \PP(\E) \bigr)\to N^1 \bigl( \PP(\E) \bigr)$ acts by scalar multiplication by $d$, where $d$ the unique integer such that $\varphi^* \O_{\PP^n}(1) = \O_{\PP^n}(d)$.
\end{lemma}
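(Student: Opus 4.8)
The plan is to study the operator induced by $\psi$ on the Néron--Severi space $N^1(\PP(\E))$. By the projective bundle formula this space is free of rank two, spanned by $\xi := c_1(\O_{\PP(\E)}(1))$ and $\pi^*H$, where $H$ is the hyperplane class on $\PP^n$. By Proposition~\ref{prop:fibreProp} there is a positive integer $e = \lambda_1(\psi\vert_\pi)$ and an integer $b$ with $\psi^*\xi = e\xi + b\,\pi^*H$, while $\psi^*(\pi^*H) = \pi^*\varphi^*H = d\,\pi^*H$ with $d\ge 1$. So in the basis $\{\xi,\pi^*H\}$ the matrix of $\psi^*$ is lower triangular with diagonal $(e,d)$, and the statement is equivalent to proving $b=0$ together with $e=d$.

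For the vanishing of $b$ I would first pin down the nef cone of $\PP(\E)$. Since $d_0=0$ and some $d_i>0$, Lemma~\ref{lem:positivity-of-bundles} tells us that $\E$ is nef and big but not ample; hence $\xi$ is nef, big and non-ample, and $\pi^*H$ is nef and non-ample (it is zero on a line in a fibre of $\pi$). As $N^1(\PP(\E))_{\bR}\cong\bR^2$ and $\operatorname{Nef}(\PP(\E))$ is a full-dimensional, strongly convex cone, its two extremal rays are forced to be $\bR_{\ge0}\xi$ and $\bR_{\ge0}\pi^*H$: both are boundary rays (nef but not ample), they are linearly independent, and a pointed plane cone has exactly two extremal rays. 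Next, for any curve $C\subseteq\PP(\E)$ choose, using surjectivity of $\psi$, an irreducible curve $C'\subseteq\PP(\E)$ with $\psi_*[C']=k[C]$ for some $k\ge1$; then for every divisor class $D$ one has $\psi^*D\cdot C' = D\cdot\psi_*C' = k\,(D\cdot C)$. Taking $D$ with $\psi^*D\equiv0$ shows $\psi^*$ is injective, hence a linear automorphism of $N^1(\PP(\E))$; taking $\psi^*D$ nef shows $(\psi^*)^{-1}$ preserves nefness. Since $\psi^*$ itself preserves nefness, it maps $\operatorname{Nef}(\PP(\E))$ onto itself and therefore permutes its two extremal rays; as it fixes $\bR_{\ge0}\pi^*H$ it must fix $\bR_{\ge0}\xi$, which forces $b=0$.

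With $\psi^*\xi=e\xi$ established, I would conclude by an intersection-number computation. The morphism $\psi$ is generically finite (a surjection between projective varieties of equal dimension); write $\delta=\deg\psi$, $m=\rank \E$, and $D=\dim\PP(\E)=n+m-1$. Pulling back the zero-cycle class $\xi^{m-1}(\pi^*H)^n$, which has degree $1$ because it restricts to a single reduced point on a general fibre of $\pi$, gives $\delta = e^{m-1}d^n$. Pulling back $\xi^D$ gives $\psi^*(\xi^D)=e^D\xi^D$, so taking degrees gives $\delta\,(\xi^D) = e^D\,(\xi^D)$ in $\bZ$; and $\xi^D>0$ since $\xi$ is nef and big, hence $\delta = e^D = e^{n+m-1}$. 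Comparing the two expressions for $\delta$ and cancelling $e^{m-1}$ yields $d^n=e^n$, so $e=d$. Thus $\psi^*\xi = d\xi$ and $\psi^*(\pi^*H)=d\,\pi^*H$, i.e.\ $\psi^*$ is multiplication by $d$ on $N^1(\PP(\E))$.

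The main obstacle is the nef-cone step. The computation of the diagonal entries of $\psi^*$ is essentially forced once one knows $\psi^*$ cannot move the ray $\bR_{\ge0}\xi$, and this rigidity is exactly what the ``nef but not ample'' conclusion of Lemma~\ref{lem:positivity-of-bundles} secures: it identifies the extremal rays of the nef cone, so that $\psi^*$ permuting extremal rays pins down $b=0$. One must be a little careful here, since merely knowing $\psi^*$ preserves the nef cone is not enough to fix the $\xi$-ray --- a linear self-embedding of a plane cone fixing one extremal ray can shear the other --- so one genuinely needs $\psi^*$ to be \emph{onto} the nef cone, which is why the remark that $(\psi^*)^{-1}$ also preserves nefness is needed. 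The ``big'' half of Lemma~\ref{lem:positivity-of-bundles} then enters only at the very end, to guarantee $\xi^D\neq0$ so that the degree identity can be divided through to extract $e=d$.
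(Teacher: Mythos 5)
Your proof is correct, and it reaches the conclusion by a partly different mechanism than the paper. The first half coincides in spirit with the paper's argument: both use nef-cone rigidity to force $\psi^*$ to preserve the ray of $\xi = c_1\bigl(\O_{\PP(\E)}(1)\bigr)$, though where the paper cites the ``finite pullback preserves ample/big in both directions'' results to see that $\psi^*$ permutes boundary rays, you argue by hand with curves and the projection formula that $\psi^*$ is injective and that $(\psi^*)^{-1}$ preserves nefness, so that $\psi^*$ maps $\operatorname{Nef}$ \emph{onto} itself --- and you are right to flag that surjectivity onto the cone, not mere preservation, is what kills the shear term $b$. The second half genuinely diverges: the paper brings in the pseudoeffective cone, notes that $\xi$ is extremal for $\operatorname{Nef}$ but not for $\overline{\operatorname{Eff}}$ while $\pi^*H$ is extremal for both, and extracts a third non-collinear eigenvector to conclude $\psi^*$ is scalar; you instead compute the eigenvalue on $\xi$ directly, comparing $\deg\psi = e^{m-1}d^n$ (from $\xi^{m-1}(\pi^*H)^n = 1$) with $\deg\psi = e^{\dim\PP(\E)}$ (from $\xi^{\dim\PP(\E)} > 0$, which is where bigness from Lemma~\ref{lem:positivity-of-bundles} enters for you), and cancel to get $e = d$. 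Your route dispenses with the pseudoeffective cone and the big-iff-big citation at the cost of a degree computation, and it isolates exactly what bigness is needed for; the paper's route is shorter once the quoted positivity facts are granted and delivers the scalar statement in one stroke. Both are complete proofs.
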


\begin{proof}
As $N^1 \bigl( \PP(\E) \bigr)$ is a two-dimensional vector space, it is enough to show that $\psi^*$ has three non-collinear eigenvectors with eigenvalue $d$. In particular, our goal is to show that the rays of the nef cone $\operatorname{Nef} \bigl( \PP(\E) \bigr)$ and the rays of the pseudoeffective cone $\overline{\operatorname{Eff}}\bigl( \PP(\E) \bigr)$ are eigenvectors of $\psi^*$.

Recall that the big cone is the interior of the pseudoeffective cone, and the ample cone is the interior of the nef cone. We claim that $\psi^*$ maps big divisors to big divisors and ample divisors to ample divisors. Indeed, consider any line bundle $\L$ on $\PP(\E)$. The map $\psi$ is surjective, hence necessarily finite by \cite[p.~1]{Amerik1}. It follows that $\psi^*\L$ is big if and only if $\L$ is big by \cite[Lemma~4.3]{holschbach2010chebotarev}, and $\psi^*\L$ is ample if and only if $\L$ is ample by \cite[Proposition~6.1.8.(iii)]{Lazarsfeld-PosII} and \cite[Section~01VG, Lemma~0892]{stacks-project}. In particular, these equivalences imply that $\psi^*$ maps the rays of $\operatorname{Nef} \bigl( \PP(\E) \bigr)$ to rays of $\operatorname{Nef} \bigl( \PP(\E) \bigr)$ and the rays of $\overline{\operatorname{Eff}}\bigl( \PP(\E) \bigr)$ to rays of $\overline{\operatorname{Eff}}\bigl( \PP(\E) \bigr)$.

Now observe that, by Lemma~\ref{lem:positivity-of-bundles}, the bundle $\O_{\PP(\E)}(1)$ is big and nef, but not ample. Thus its class is a boundary ray of $\operatorname{Nef} \bigl( \PP(\E) \bigr)$ but not of $\overline{\operatorname{Eff}}\bigl( \PP(\E) \bigr)$. 

On the other hand, the class of $\pi^*\O_{\PP^n}(1)$ is a ray of both cones. Indeed, it is nef because it is the pullback of an ample divisor, but it is not ample because its intersection with any curve contained in a fibre is zero. Similarly, since $\pi^*\O_{\PP^n}(1)$ is nef, its volume is given by its top self-intersection, which is zero for dimension reasons. This implies that this line bundle is not big. It follows that $\pi^*\O_{\PP^n}(1)$ defines a ray of both the nef and pseudoeffective cones of $\PP(\E)$.

Finally, since $\psi$ is based, we have $\pi \circ \psi = \varphi \circ \pi$. Hence, $\psi^* \pi^* \bigl( \O_{\PP^n}(1) \bigr) = \pi^* \varphi^* \bigl( \O_{\PP^n}(1) \bigr) = \pi^* \bigl( \O_{\PP^n}(d) \bigr)$, so that $\pi^* \O_{\PP^n}(1)$ is an eigenvector with eigenvalue $d$. Since $\psi^*$ is injective, it follows that the other two rays of $\operatorname{Nef} \bigl( \PP(\E) \bigr)$ and $\overline{\operatorname{Eff}}\bigl( \PP(\E) \bigr)$ must also be eigenvectors. But by our observations above, these rays are distinct. We conclude that there are three non-collinear eigenvectors for $\psi^*$ as desired, and since the eigenvalue of one of them is $d$, it follows that $\psi^*$ must be scalar multiplication by $d$.
\end{proof}

Finally, we need the following technical lemma which leverages some arithmetic dynamics.

\begin{lemma}
\label{lem:nonjumping}
Let $\varphi \colon X \rightarrow X$ be a surjective morphism and $V \subseteq X$ a closed subvariety with $\varphi(V)=V$. Assume that the Kawaguchi--Silverman conjecture holds for $\varphi\vert_V$ and that $P \in V$ is a point whose orbit under $\varphi \vert_{V}$ is Zariski dense in $V$. Then, we have that $\lambda_1(\varphi\vert_V)\leq \lambda_1(\varphi)$.
\end{lemma}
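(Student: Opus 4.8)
The statement to prove is Lemma~\ref{lem:nonjumping}: if $\varphi \colon X \to X$ is surjective, $V \subseteq X$ is closed with $\varphi(V) = V$, the Kawaguchi--Silverman conjecture holds for $\varphi\vert_V$, and $P \in V$ has a Zariski-dense orbit in $V$ under $\varphi\vert_V$, then $\lambda_1(\varphi\vert_V) \leq \lambda_1(\varphi)$. The key observation is that the arithmetic degree is essentially insensitive to whether we compute heights on $V$ or on the ambient $X$, because a height function for an ample divisor on $X$ restricts to a height function on $V$ that bounds (up to $O(1)$, and in fact up to a bounded multiplicative constant after taking positive parts) a height for an ample divisor on $V$.

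First I would set up the arithmetic degrees carefully. Let $H$ be an ample divisor on $X$; then $H\vert_V$ is ample on $V$, so $h_{H\vert_V} = h_H\vert_{V(\Qbar)} + O(1)$ is a Weil height for an ample class on $V$. Since the point $P$ and all its $\varphi$-iterates lie in $V$, we get
\[
    \alpha_{\varphi\vert_V}(P) = \lim_{n\to\infty} h^+_{H\vert_V}\bigl(\varphi^n(P)\bigr)^{1/n} = \lim_{n\to\infty} h^+_{H}\bigl(\varphi^n(P)\bigr)^{1/n} = \alpha_\varphi(P),
\]
where the middle equality uses that the two height functions differ by a bounded amount, which disappears after extracting $n$-th roots and taking the limit (one must check the limit on the right exists and is independent of the chosen ample class on $X$, but that is exactly the content of the Kawaguchi--Silverman existence results cited in Subsection~\ref{subsec:on-dynamics}). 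So $\alpha_{\varphi\vert_V}(P) = \alpha_\varphi(P)$.

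Next I would invoke the hypotheses. Since $P$ has a Zariski-dense orbit in $V$ under $\varphi\vert_V$ and KSC holds for $\varphi\vert_V$, we obtain $\alpha_{\varphi\vert_V}(P) = \lambda_1(\varphi\vert_V)$. Combining with the previous paragraph gives $\lambda_1(\varphi\vert_V) = \alpha_\varphi(P)$. Finally I would use the general inequality $\alpha_\varphi(P) \leq \lambda_1(\varphi)$, which holds unconditionally for every point $P\in X(\Qbar)$ (this is the ``easy half'' of KSC, due to Kawaguchi--Silverman; it follows from the fact that heights grow at most like the dynamical degree under pullback). Chaining these yields $\lambda_1(\varphi\vert_V) \leq \lambda_1(\varphi)$, as desired.

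**Main obstacle.** The only real subtlety is the comparison of height functions in the first step: one needs that restricting an ample height from $X$ to $V$ yields something comparable to an ample height on $V$, and that the passage to $n$-th roots genuinely kills the $O(1)$ discrepancy. This is standard functoriality of Weil heights together with the fact that the limit defining $\alpha$ is insensitive to bounded perturbations and to the choice of ample class, but it should be stated cleanly since it is the crux of why one can ``transfer'' the arithmetic degree between $V$ and $X$. Everything else is a direct application of results quoted earlier (existence of $\alpha_\varphi$, the unconditional upper bound $\alpha_\varphi(P)\le\lambda_1(\varphi)$, and the hypothesised validity of KSC for $\varphi\vert_V$).
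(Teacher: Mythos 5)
Your proposal is correct and follows essentially the same route as the paper: identify $\alpha_{\varphi\vert_V}(P)$ with $\alpha_\varphi(P)$ by restricting an ample height from $X$ to $V$ (the paper uses $\widehat{H}=\iota^*H$ with $h_{\widehat{H}}=h_H\circ\iota$, which handles your $O(1)$ concern directly), then apply KSC for $\varphi\vert_V$ and the unconditional bound $\alpha_\varphi(P)\leq\lambda_1(\varphi)$ (cited in the paper as Matsuzawa's theorem). No substantive differences.
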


\begin{proof}
Let $\iota \colon V \rightarrow X$ be the inclusion map. We have that $\alpha_{\varphi\vert_V}(P)=\lambda_1(\varphi\vert_V)$ as we have assumed the Kawaguchi--Silverman conjecture is true for $\varphi\vert_V$.
Fix an ample divisor $H$ on $X$ and a height function $h_H$ for $H$. It follows that the divisor $\widehat{H} = \iota^*H$ has a height function $h_{\widehat{H}}=h_H \circ \iota$. Therefore, we have that
\begin{equation*}
    \alpha_\varphi(P) = \lim_{n \rightarrow \infty} h_H(\varphi^n(P))^{\frac{1}{n}} = \lim_{n \rightarrow \infty} h_{\widehat{H}} \bigl( (\varphi\vert_V)^n(P) \bigr)^{\frac{1}{n}} = \alpha_{\varphi\vert_V}(P).
\end{equation*}
Moreover, by \cite[Theorem~1.4]{matsuzawa2020bounds}, we always have that $\lambda_1(g)\geq \alpha_g(P)$ so that
\begin{equation*}
    \lambda_1(\varphi) \geq \alpha_\varphi(P) = \alpha_{\varphi\vert_V}(P) = \lambda_1(\varphi\vert_V)
\end{equation*}
as claimed.
\end{proof}

\begin{remark}
The assumption that $V$ is fixed under $\varphi$ is necessary. For example, let $\varphi \colon \PP^2 \rightarrow \PP^2$ be the second toric Frobenius map on the projective plane, so that a point $[x:y:z]$ gets sent to $[x^2:y^2:z^2]$. A straightforward computation shows that the dynamical degree is $\lambda_1(\varphi) = 2$. Consider the rational curve $C = \VV(x^2 + y^2 + z^2)$. The image of $C$ under $\varphi$ is the curve $C' = \VV(x + y + z)$, which is also rational. Hence we may interpret the restriction $\varphi\vert_C$ as a map from $\PP^1$ to $\PP^1$. However, since the dynamical degree over a curve is equal to the topological degree, we have that $\lambda_1(\varphi\vert_C) = 4$, as every point generically has four preimages under $\varphi$. It is a straightforward check that $C$ is not a periodic curve for $\varphi$.
\end{remark}

With the lemmas in hand, we are able to proceed with the proof.

\begin{proof}[Proof of Theorem~\ref{thm:basedmaps-plus-projtrivial}]
Since $\lambda_1(\varphi) = \lambda_1(\varphi^k)$ for any positive integer $k$, we may assume without loss of generality that $\varphi(C) = C$. Fix an isomorphism $C\cong \bP^1$ and write
\begin{equation*}
    \E\vert_C \cong \bigoplus_{i=1}^{\rank \E}\O_C(d_i)
\end{equation*} 
for some integers $d_i$. By our assumption that $\E$ is not projectively trivial over $C$, we must have that $a_i\neq a_j$ for some $i\neq j$. Since $\varphi(C)=C$, and $\psi$ commutes with $\varphi$ after restricting to $C$, we obtain an induced commutative diagram
\begin{equation*}
\begin{tikzcd}
\PP \Bigl(\bigoplus_{i=1}^{\rank \E}\O_C(a_i) \Bigr) \arrow[r, "\psi" above] \arrow[d, "\pi" left] & \PP \Bigl(\bigoplus_{i=1}^{\rank \E}\O_C(a_i) \Bigr) \arrow[d, "\pi" right] \\
\PP^1 \arrow[r, "\varphi" below] & \PP^1.
\end{tikzcd}
\end{equation*}
After possibly twisting $\E\vert_C$ by a line bundle, we may assume that $d_1 \leq d_2 \leq \cdots \leq d_r$ are such that $d_1 = 0$ and some $d_i > 0$.

Suppose that $\psi^*\O_{\PP(\E\vert_C)}(1)=\O_{\PP(\E\vert_C)}(d)$. Lemma~\ref{lem:pullback-is-constant} implies that $\psi^* \colon N^1 \bigl( \PP(\E\vert_C) \bigr) \rightarrow N^1 \bigl( \PP(\E\vert_C) \bigr)$ is the multiplication by $d$ map. This means that $d = \lambda_1(\varphi\vert_C)$ by Proposition~\ref{sec bg: degrees for endos of pn}, and Proposition~\ref{prop:fibreProp} tells us that $\lambda_1(\psi\vert_\pi) = d$. Since $\varphi(C) = C$, Lemma~\ref{lem:nonjumping} tells us that $\lambda_1(g\vert_C)\leq \lambda_1(g)$. Therefore, the product formula for dynamical degrees allows us to conclude that
\begin{equation*}
    \lambda_1(\psi) = \max\{\lambda_1(\psi\vert_{\pi}),\lambda_1(\varphi)\} = \lambda_1(\varphi). \qedhere
\end{equation*}
\end{proof}

One natural setting for Theorem~\ref{thm:basedmaps-plus-projtrivial} is for toric vector bundles on toric varieties. In this case, the torus-invariant curves on the base toric variety are good candidates for checking projective triviality. Biswas and Santos show that for rationally connected varieties, a bundle $\E$ being projectively trivial on all rational curves implies that $\E$ is projectively trivial; see \cite[Theorem~1.1]{biswassantos2009}. Here is a strengthening of this result in the case of toric varieties, where we only demand projective triviality on the torus-invariant curves.

\begin{proposition}
Suppose $\E$ is a toric vector bundle on a smooth projective toric variety $X$. If $\E$ is projectively trivial over every torus-invariant curve on $X$, then $\E$ is projectively trivial.
\end{proposition}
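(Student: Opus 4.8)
The plan is to bypass Biswas--Santos and instead reduce the statement to the triviality of the endomorphism bundle $\mathcal{E}nd(\E):=\E\otimes\E^{\vee}$, which can in turn be extracted from the toric hypothesis via the fact that positivity of a toric vector bundle is detected on torus-invariant curves. Throughout, set $r=\rank(\E)$.

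\emph{Reduction.} I would first record the elementary fact that $\E$ is projectively trivial if and only if $\mathcal{E}nd(\E)$ is a trivial $\O_X$-module. One implication is immediate, since $\mathcal{E}nd(\L^{\oplus r})\cong\O_X^{\oplus r^2}$. For the converse, suppose $\mathcal{E}nd(\E)\cong\O_X^{\oplus r^2}$. Then $R:=\End_{\O_X}(\E)=H^0(X,\mathcal{E}nd(\E))$ is an $r^2$-dimensional $k$-algebra, and evaluation at any closed point $x$ is a $k$-algebra homomorphism $R\to\End_k(\E_x)\cong\mathrm{Mat}_r(k)$ which is an isomorphism because $\mathcal{E}nd(\E)$ is a trivial bundle; hence $R\cong\mathrm{Mat}_r(k)$. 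Pulling back the standard matrix units yields $e_{ij}\in\End_{\O_X}(\E)$ with the usual relations; the $e_{ii}$ are idempotents of constant fibrewise rank $1$, so the $\L_i:=e_{ii}(\E)$ are line subbundles with $\E=\bigoplus_i\L_i$, and the off-diagonal $e_{ij}$ furnish isomorphisms $\L_j\cong\L_i$, whence $\E\cong\L_1^{\oplus r}$. So it suffices to prove that $\mathcal{E}nd(\E)$ is trivial.

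\emph{Triviality of $\mathcal{E}nd(\E)$.} By hypothesis, for every wall $\tau\in\Sigma(n-1)$ the corresponding torus-invariant curve $C_\tau\cong\PP^1$ satisfies $\E\vert_{C_\tau}\cong\O_{\PP^1}(b_\tau)^{\oplus r}$ for some $b_\tau\in\bZ$, and since forming endomorphism bundles commutes with restriction, $\mathcal{E}nd(\E)\vert_{C_\tau}\cong\O_{\PP^1}^{\oplus r^2}$; in particular $\mathcal{E}nd(\E)$ is nef on every torus-invariant curve. Now $\E$ toric forces $\E^{\vee}$ and $\mathcal{E}nd(\E)=\E\otimes\E^{\vee}$ to be toric as well, so by the theorem of Hering, Musta\c{t}\u{a}, and Payne that nefness (equivalently, global generation) of a toric vector bundle on a complete toric variety is detected on torus-invariant curves, $\mathcal{E}nd(\E)$ is globally generated. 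Since also $\det\mathcal{E}nd(\E)=(\det\E)^{-r}\otimes(\det\E)^{r}=\O_X$, and a globally generated bundle $\mathcal{F}$ of rank $N$ with $\det\mathcal{F}\cong\O_X$ is trivial --- a general $N$-tuple of sections $s_1,\dots,s_N$ gives $\O_X^{N}\to\mathcal{F}$ whose determinant $s_1\wedge\cdots\wedge s_N$ is a nonzero constant section of $\det\mathcal{F}=\O_X$, hence an isomorphism --- we conclude $\mathcal{E}nd(\E)\cong\O_X^{\oplus r^2}$, and the reduction step finishes the argument.

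\emph{Main obstacle.} The content all sits in one place. The invariant-curve hypothesis is a priori far weaker than projective triviality over all rational curves, so a direct appeal to Biswas--Santos would require propagating the condition to arbitrary rational curves, which is unclear. The endomorphism-bundle maneuver converts this into a positivity question, and the genuinely toric input --- the Hering--Musta\c{t}\u{a}--Payne detection of positivity on torus-invariant curves --- is precisely what closes the gap; I expect this to be the crux. Everything else (the reduction to $\mathcal{E}nd(\E)$ and the splitting via matrix units) is formal, and in particular makes no use of simple connectedness or of the structure of $\varphi$.
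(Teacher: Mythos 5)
Your reduction of projective triviality to triviality of $\mathcal{E}nd(\E)$ is fine (the Morita-style argument with matrix units is correct over $\overline{\bQ}$, using $H^0(X,\O_X)=k$ and the local constancy of the rank of an idempotent endomorphism), but the positivity step contains a genuine gap. The Hering--Musta\c{t}\u{a}--Payne theorem you appeal to says that a toric vector bundle on a complete toric variety is nef (resp.\ ample) if and only if its restriction to every torus-invariant curve is nef (resp.\ ample); it does \emph{not} say anything about global generation, and your parenthetical ``nefness (equivalently, global generation)'' is not a theorem for toric vector bundles of rank $>1$. For line bundles on complete toric varieties nef indeed equals globally generated, but for higher rank this is far from formal --- global generation of toric vector bundles has its own, non-numerical criteria (e.g.\ via Klyachko filtrations / parliaments of polytopes), and whether even \emph{ample} toric vector bundles must be globally generated was raised by Hering--Musta\c{t}\u{a}--Payne as an open question. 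So the chain ``nef on invariant curves $\Rightarrow$ nef $\Rightarrow$ globally generated $\Rightarrow$ (with $\det=\O_X$) trivial'' breaks at the second implication as you have written it.

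The good news is that your hypothesis is much stronger than nefness and the gap closes with the correct citation: $\mathcal{E}nd(\E)$ is an equivariant bundle whose restriction to \emph{every} invariant curve is trivial, and Hering--Musta\c{t}\u{a}--Payne's Theorem~6.4 (the result the paper itself invokes) states precisely that such a toric vector bundle is trivial. Substituting that for your nef/globally-generated detour, your argument becomes a correct proof, and a genuinely different one from the paper's: the paper uses the unimodularity of the intersection pairing on a smooth projective toric variety (Danilov) to extract an $r$-th root $\L$ of $\det\E$ with the right degrees $a_C$, twists, and applies the same triviality criterion to $\E\otimes\L^{-1}$; your route applies the criterion to $\mathcal{E}nd(\E)$ and then recovers the splitting $\E\cong\L^{\oplus r}$ from the matrix-unit idempotents, avoiding the root-extraction step (at the price of the slightly longer endomorphism-algebra argument). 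Also note that your ``globally generated with trivial determinant implies trivial'' lemma, while correct, becomes unnecessary once you use the triviality criterion directly.
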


\begin{proof}
Let $C$ be a torus invariant curve. By assumption we have that $\E\vert_C\cong\O_C(a_C)^{\oplus r}$ for some integer $a_C$. We obtain $\det(\E)\vert_C\cong\det(\E\vert_C)\cong\O_C(ra_C)$. Note that $\langle \det(\E),C\rangle=\deg(\det(\E)\vert_C)=ra_C$. Since the torus invariant curves generate $N_1(X)$, we see that $\dfrac{1}{r}\langle \det \E,\bullet\rangle$ gives an integral linear form on $N_1(X)$. Since $X$ is a smooth projective toric variety, \cite[Theorem~10.8]{Danilov1978toric} says the cohomology and Chow rings are isomorphic and torsion-free. Furthermore, recall that \cite[Corollary on p.~65]{Danilov1996cohomology} says that for smooth varieties, the intersection pairing between divisors and curves is unimodular modulo torsion.  Hence, the intersection pairing is an integral perfect pairing. Therefore there is some line bundle $\L$ on $X$ with $\langle \L,C\rangle=a_C$ and $\L^{\otimes r}=\det\E$. Now consider $\E\otimes \L^{-1}$. We have $(\E\otimes \L^{-1})\vert_C\cong \O_C(a_C)^{\oplus r}\otimes \O_C(-a_C)\cong\O_C^{\oplus r}$. In other words $(\E\otimes \L^{-1})\vert_C$ is trivial for any torus invariant curve. By \cite[Theorem~6.4]{hering2010positivity}, this implies $\E\otimes \L^{-1}\cong \O_X^{\oplus r}$, giving that $\E\cong \L^{\oplus r}$ as desired. 
\end{proof}

If the endomorphism $\psi \colon \PP(\E) \rightarrow \PP(\E)$ is base-toric, then the torus-invariant curves on $X$ are automatically periodic, and so Theorem~\ref{thm:basedmaps-plus-projtrivial} applies.

\begin{corollary}\label{cor:maincor}
Suppose $\E$ is a toric vector bundle that is not projectively trivial on a smooth projective toric variety $X$ and $\psi \colon \PP(\E) \rightarrow \PP(\E)$ is a base-toric map with base map $\varphi$. We have $\lambda_1(\psi) = \lambda_1(\varphi)$.
\end{corollary}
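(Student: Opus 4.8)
The plan is to deduce this directly from Theorem~\ref{thm:basedmaps-plus-projtrivial} by producing, on the base $X$, an irreducible rational curve that is periodic for the base map $\varphi$ and over which $\E$ is not projectively trivial. All the real work has already been done in Theorem~\ref{thm:basedmaps-plus-projtrivial} and in the projective-triviality criterion immediately preceding this corollary; what remains is to check the hypotheses.

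First I would observe that, since $\psi$ is base-toric, $\varphi$ is a surjective toric endomorphism of $X$, so the induced lattice map $\overline{\varphi}$ is surjective. As recorded in the proof of Lemma~\ref{lemma:toric reduction to mult by d}, surjectivity forces $\overline{\varphi}$ to induce a dimension-preserving bijection on the cones of $\Sigma$; in particular it permutes the finitely many $(n-1)$-dimensional cones, where $n=\dim X$, and hence permutes the finitely many torus-invariant curves of $X$. Consequently every torus-invariant curve $C$ satisfies $\varphi^k(C)=C$ for some $k\geq 1$, i.e. $C$ is periodic for $\varphi$. (Alternatively one may apply Lemma~\ref{lemma:toric reduction to mult by d} itself: after passing to an iterate $\overline{\varphi}$ scales each ray of $\Sigma$ by a positive integer, so the torus-invariant curves are in fact fixed.) Moreover, each such $C$ is the orbit closure of a codimension-one cone in a smooth complete fan, hence a smooth complete toric curve, so $C\cong\PP^1$; in particular $C$ is rational.

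Next I would invoke the Proposition preceding this corollary: if $\E$ were projectively trivial over every torus-invariant curve of $X$, then $\E$ would be projectively trivial on $X$. Since by hypothesis $\E$ is not projectively trivial, there exists a torus-invariant curve $C\subseteq X$ over which $\E$ is not projectively trivial. With this $C$ — rational, periodic for $\varphi$, and with $\E\vert_C$ not projectively trivial — Theorem~\ref{thm:basedmaps-plus-projtrivial} applies and gives $\lambda_1(\psi)=\lambda_1(\varphi)$.

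I do not expect a genuine obstacle here: the statement is an essentially formal consequence of Theorem~\ref{thm:basedmaps-plus-projtrivial} combined with the toric projective-triviality criterion. The one point worth a sentence of care is the interplay with iterates in the first step, in case one prefers the route through Lemma~\ref{lemma:toric reduction to mult by d}: replacing $\varphi$ by $\varphi^k$ replaces $\lambda_1$ by its $k$-th power on both sides, so the desired equality is unaffected, and one recovers it by taking $k$-th roots of positive reals.
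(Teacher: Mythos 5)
Your argument is correct and is essentially the paper's own proof: the paper deduces the corollary from Theorem~\ref{thm:basedmaps-plus-projtrivial} by noting (as in the sentence preceding the corollary) that for a base-toric map the torus-invariant curves are periodic, and by the preceding proposition the non-projective-triviality of $\E$ forces some torus-invariant curve $C\cong\PP^1$ over which $\E$ is not projectively trivial. Your extra remarks on iterates and on the cone-permutation argument from Lemma~\ref{lemma:toric reduction to mult by d} are consistent with, and only elaborate on, the paper's route.
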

Note that \cite[Theorem 4.1]{MR4070310} implies Conjecture~\ref{conj:KSC} for projective toric varieties. Therefore, applying Corollary~\ref{cor:maincor} and Proposition~\ref{cor:standard1} together gives a proof of Corollary~\ref{cor:mainKSC} for the introduction.

\begin{remark}
    Theorem~\ref{thm:basedmaps-plus-projtrivial} may be generalized to higher genus curves. If $C$ is allowed to be an irreducible curve of geometric genus $1$ and $\E\vert_C$ is not a direct sum of torsion line bundles, then we may also conclude that $\lambda_1(\psi)=\lambda_1(\varphi)$. Indeed, by the proof of \cite[Corollary 3.8]{lesieutresatriano2021ksc} we have that either $\lambda_1(\psi\vert_C) = \lambda_1(\varphi\vert_C)$ or $\E\vert_C$ is semistable of degree 0. In the latter case the degree on the fibres is at most $\lambda_1(\varphi\vert_C) \leq \lambda_1(\varphi)$. In the former case $\E\vert_C$ is semistable of degree 0 and is not a direct sum of torsion line bundles by assumption. Hence, we may apply \cite[Theorems~1.1 and 1.2]{NasserdenZotine2023} to obtain that the degree on the fibres is at most $\lambda_1(\varphi)$. In either case we conclude via the product formula that $\lambda_1(\psi) = \lambda_1(\varphi)$.
\end{remark}


\subsection{Chern classes of bundles admitting based endomorphisms}

Admitting a based endomorphism seems to impose strong conditions on the Chern classes of the vector bundle, which we investigate in this subsection. Let $X$ be a smooth projective variety and $\E$ be a rank $r$ vector bundle on $X$. Any based morphism $\psi \colon \PP(\E) \rightarrow \PP(\E)$ with base map $\varphi$ induces a commutative diagram of Chow rings
\begin{equation*}
\begin{tikzcd}
A^\bullet (X) \arrow[r, "\varphi^*" above] \arrow[d, "\pi^*" left] & A^\bullet (X) \arrow[d, "\pi^*" right] \\
A^\bullet \bigl( \PP(\E) \bigr) \arrow[r, "\psi^*" below] & A^\bullet \bigl( \PP(\E) \bigr).
\end{tikzcd}
\end{equation*}
From now on we embed $A^\bullet(X)$ inside $A^\bullet(\PP(\E))$ via $\pi^*$. In what follows we use additive notation for the Chow ring and identify $\alpha\in \Pic(X)$ with $\pi^*\alpha\in \Pic(\PP(\E))$. Set $\L=\O_{\PP(\E)}(1)$. Then we have $\psi^* \L=d\L+\alpha$ where $\alpha\in \Pic(X)$ and $d$ is the degree of $\psi$ on the fibres of $\pi$. The Chow ring of $\PP(\E)$ is identified by the quotient
\begin{equation*}
    A^\bullet \bigl( \PP(\E) \bigr) = \frac{A^\bullet(X)[\L]}{P_\E(\L)},
\end{equation*}
where $P_\E(\L) = \sum_{i=0}^{r} (-1)^i \L^{r-i} c_i(\E)$. In particular, $\psi^* P_\E(\L) = 0$, so we have that $\psi^* P_\E(\L) = q(\L) P_\E(\L)$ for some $q(\L) \in A^\bullet(X)[\L]$. On the other hand we know that
\begin{equation*}
    \psi^* P_\E(\L) = \psi^* \Bigl( \sum_{i=0}^{r} (-1)^i \L^{r-i} c_i(\E) \Bigr) = \sum_{i=0}^r (-1)^i (d \L + \alpha)^{r-i} \varphi^* c_i(\E).
\end{equation*}
The leading term of the right-hand side is $d^r \L^r$, which is $d^r$ times the leading term of $P_\E(\L)$, so we conclude that $q(\L)$ is a constant polynomial $d^r$. This gives us the equality
\begin{equation}
\label{eqn:chernclass-conditions}
    \sum_{i=0}^r (-1)^i (d \L + \alpha)^{r-i} \varphi^* c_i(\E) = d^r \sum_{i=0}^r (-1)^i \L^{r-i} c_i(\E).
\end{equation}
By comparing coefficients in this equation, we are able to derive conditions on the Chern classes of $\E$.

\begin{lemma}
\label{lem:chern-coefficients}
The coefficient of $\L^n$ in $\psi^* P_\E(\L)$ is
\begin{equation*}
    \sum_{i=0}^{r-n} (-1)^i \binom{r-i}{n} d^n \cdot \alpha^{r-n-i} \cdot \varphi^* c_i(\E).
\end{equation*}
\end{lemma}

\begin{proof}
Sum over powers $(d\L+\alpha)^{r-i}$ with $r-i \geq n$ and apply the binomial theorem.
\end{proof}
This allows us to determine the line bundle $\alpha$ explicitly in terms of $\E$, $d$, and $\varphi$. 

\begin{corollary}
\label{cor:trans factor}
We have that 
\begin{equation*}
    \alpha=\frac{\varphi^*c_1(\E) - dc_1(\E)}{r}.
\end{equation*}
\end{corollary}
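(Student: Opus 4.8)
The plan is to read off $\alpha$ from a single coefficient in the polynomial identity \eqref{eqn:chernclass-conditions}. Since $\psi^*\L = d\L + \alpha$ with $\alpha$ a divisor class, the class $\alpha$ enters linearly precisely in the coefficient of $\L^{r-1}$, the term just below the leading $\L^r$, so this is the coefficient to extract.

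First I would apply Lemma~\ref{lem:chern-coefficients} with $n = r-1$ to compute the coefficient of $\L^{r-1}$ on the left-hand side $\psi^*P_\E(\L)$. Only the terms $i=0,1$ survive, since $(d\L+\alpha)^{r-i}$ has $\L$-degree $r-i < r-1$ once $i \geq 2$. Using $c_0(\E)=1$, $\binom{r}{r-1}=r$, and $\binom{r-1}{r-1}=1$, this coefficient is $r\,d^{r-1}\,\alpha - d^{r-1}\,\varphi^*c_1(\E)$. Next I would read off the coefficient of $\L^{r-1}$ on the right-hand side $d^r\sum_{i=0}^r(-1)^i\L^{r-i}c_i(\E)$, which is just $-d^r c_1(\E)$.

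Equating the two expressions and cancelling the nonzero factor $d^{r-1}$ (note $d \geq 1$) gives $r\alpha - \varphi^*c_1(\E) = -d\,c_1(\E)$, that is $r\alpha = \varphi^*c_1(\E) - d\,c_1(\E)$, which is exactly the claimed formula. The only point deserving a word of care is the division by $r$: the identity $r\alpha = \varphi^*c_1(\E) - d\,c_1(\E)$ holds on the nose in $A^1(X)$, so the displayed fraction should be understood as this equality (equivalently, an equality in $A^1(X)\otimes\mathbb{Q}$, with the assertion that the right-hand side is $r$ times the integral class $\alpha$). I do not anticipate any genuine obstacle here; the proof is a short coefficient comparison, with the only thing to double-check being the binomial bookkeeping of Lemma~\ref{lem:chern-coefficients} specialized at $n=r-1$, which is immediate.
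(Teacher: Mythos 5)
Your proposal is correct and follows essentially the same route as the paper: extract the coefficient of $\L^{r-1}$ in \eqref{eqn:chernclass-conditions} via Lemma~\ref{lem:chern-coefficients} and solve for $\alpha$. You even handle the sign more carefully than the paper's write-up, which states the right-hand coefficient as $d^r c_1(\E)$ rather than $-d^r c_1(\E)$ (a slip that, taken literally, would give the wrong sign on $d\,c_1(\E)$), and your remark that $r\alpha=\varphi^*c_1(\E)-d\,c_1(\E)$ holds integrally in $A^1(X)$ is a sensible clarification.
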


\begin{proof}
By Lemma~\ref{lem:chern-coefficients}, the $\L^{r-1}$ term of $\psi^* P_\E(\L)$ has coefficient
\begin{equation*}
    \binom{r}{r-1} \cdot d^{r-1} \cdot \alpha \cdot \varphi^* c_0(\E) - \binom{r-1}{r-1} \cdot d^{r-1} \cdot \varphi^* c_1(\E).
\end{equation*}
The equality in \eqref{eqn:chernclass-conditions} tells us this must be equal to $d^r c_1(\E)$, so we obtain
\begin{equation*}
    r d^{r-1} \alpha - d^{r-1} \varphi^* c_1(\E) = d^r c_1(\E).
\end{equation*}
Solving for $\alpha$ gives the result.
\end{proof}

Finally, when the base map satisfies $\varphi^* c_i(\E) = q^i c_i(\E)$ for some integer $q \neq d$ and all $i \in \mathbb{N}$, we can conclude that all of the Chern classes of $\E$ are generated by the first Chern class of $\E$. Examples of maps satisfying this assumption are the $q$th toric Frobenius map on a toric variety, or multiplication by $q$ on an abelian variety.

\begin{theorem}\label{thm:chernrelations}
Suppose that $\psi$ is a based map with base map $\varphi$ and relative degree $d$. If $\varphi^* c_i(\E) = q^i c_i(\E)$ for some integer $q \neq d$ and all $i \in \mathbb{N}$, then for any $k \geq 1$ we have
\begin{equation*}
    c_k(\E) = \frac{\binom{r}{k} c_1(\E)^k}{r^k}.
\end{equation*}
\end{theorem}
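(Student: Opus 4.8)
The plan is to extract from the Chern-class identity~\eqref{eqn:chernclass-conditions} a self-similar functional equation for the characteristic polynomial $P_\E(\L)=\sum_{i=0}^{r}(-1)^i\L^{r-i}c_i(\E)$ and then solve it. Throughout I would work in $A^\bullet(X)\otimes\bQ$ (for smooth toric $X$ the Chow ring is already torsion-free, and the conclusion amounts to the denominator-free relation $r^{k}c_k(\E)=\binom{r}{k}c_1(\E)^k$).

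First I would feed the hypothesis $\varphi^*c_i(\E)=q^ic_i(\E)$ into Corollary~\ref{cor:trans factor}: taking $i=1$ gives $\alpha=\frac{q-d}{r}c_1(\E)$. Writing $\beta:=\frac{1}{r}c_1(\E)$, this says $\alpha=(q-d)\beta$, and hence $\frac{d\L+\alpha}{q}=\frac{d}{q}(\L-\beta)+\beta$. Next, substituting $\varphi^*c_i(\E)=q^ic_i(\E)$ into the left-hand side of~\eqref{eqn:chernclass-conditions} and pulling the powers of $q$ inside $P_\E$ gives
\[
\sum_{i=0}^{r}(-1)^i(d\L+\alpha)^{r-i}q^ic_i(\E)=q^{r}\,P_\E\Bigl(\tfrac{d\L+\alpha}{q}\Bigr),
\]
so~\eqref{eqn:chernclass-conditions} becomes the identity $q^{r}P_\E\bigl(\tfrac{d}{q}(\L-\beta)+\beta\bigr)=d^{r}P_\E(\L)$ in $A^\bullet(X)_\bQ[\L]$.

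Then I would recenter: set $Q(T):=P_\E(T+\beta)$, a monic degree-$r$ polynomial in $T$. The previous identity becomes the formal polynomial identity $q^{r}Q\bigl(\tfrac{d}{q}T\bigr)=d^{r}Q(T)$. Writing $Q(T)=\sum_{j=0}^{r}a_jT^{r-j}$ with $a_0=1$ and comparing coefficients of $T^{r-j}$ yields $a_j\,d^{r-j}q^{j}=a_j\,d^{r}$, that is $a_j\,d^{r-j}(q^{j}-d^{j})=0$ for every $j$. Since $d$ and $q$ are distinct positive integers, $d\neq 0$ and $q^{j}\neq d^{j}$ for all $j\geq 1$, forcing $a_j=0$ for $j\geq 1$. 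Hence $Q(T)=T^{r}$, i.e. $P_\E(\L)=(\L-\beta)^{r}=\bigl(\L-\tfrac{1}{r}c_1(\E)\bigr)^{r}$. Expanding $(\L-\beta)^{r}=\sum_{k=0}^{r}(-1)^k\binom{r}{k}\beta^{k}\L^{r-k}$ and comparing with $P_\E(\L)=\sum_{k=0}^{r}(-1)^kc_k(\E)\L^{r-k}$ gives $c_k(\E)=\binom{r}{k}\beta^{k}=\binom{r}{k}c_1(\E)^k/r^{k}$ for $1\leq k\leq r$; for $k>r$ both sides vanish.

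The only point demanding care is the bookkeeping of denominators: $\beta=c_1(\E)/r$ and the divisions by $q^{j}-d^{j}$ force one to work rationally, though clearing denominators at the end recovers the integral relation $r^{k}c_k(\E)=\binom{r}{k}c_1(\E)^k$. I do not expect a genuine obstacle; the whole argument hinges on the single observation that substituting the value of $\alpha$ from Corollary~\ref{cor:trans factor} makes~\eqref{eqn:chernclass-conditions} self-similar about the ``average Chern root'' $c_1(\E)/r$. A more pedestrian alternative would be to extract the coefficient of $\L^{r-k}$ from~\eqref{eqn:chernclass-conditions} via Lemma~\ref{lem:chern-coefficients}, isolate $c_k(\E)$, and induct on $k$, using $\binom{r-i}{k-i}\binom{r}{i}=\binom{r}{k}\binom{k}{i}$ together with the binomial theorem to evaluate the resulting sum; this reaches the same conclusion but is messier.
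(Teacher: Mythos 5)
Your proof is correct, but it takes a genuinely different route from the paper's. The paper's argument (relegated to the Appendix) inducts on $k$: it extracts the coefficient of $\L^{r-(k+1)}$ from \eqref{eqn:chernclass-conditions}, substitutes the inductive hypothesis, and closes the induction via trinomial revision together with the binomial expansion of $\bigl((q-d)-q\bigr)^{k+1}$. You instead observe that once $\alpha=\tfrac{q-d}{r}c_1(\E)$ from Corollary~\ref{cor:trans factor} is substituted, \eqref{eqn:chernclass-conditions} expresses a self-similarity of $P_\E$ about $\beta=c_1(\E)/r$: recentering by $Q(T)=P_\E(T+\beta)$ turns it into the functional equation $q^rQ(\tfrac{d}{q}T)=d^rQ(T)$ in $A^\bullet(X)_\bQ[T]$ (legitimate, since \eqref{eqn:chernclass-conditions} was established as an identity in the polynomial ring $A^\bullet(X)[\L]$, where $T\mapsto \L-\beta$ is a ring automorphism and coefficients are well defined), and comparing coefficients forces $Q(T)=T^r$, i.e.\ $P_\E(\L)=\bigl(\L-c_1(\E)/r\bigr)^r$, from which all the relations $c_k(\E)=\binom{r}{k}c_1(\E)^k/r^k$ drop out at once. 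This is shorter, avoids the combinatorial identity entirely, and yields the more conceptual conclusion that rationally $\E$ has a single repeated Chern root $c_1(\E)/r$; your ``pedestrian alternative'' at the end is essentially the paper's proof. Two caveats, both shared with the paper's argument rather than genuine gaps: you need $q^j\neq d^j$ for every $j\geq 1$, not merely $q\neq d$, which you secure by taking $q,d$ distinct positive integers (the paper likewise divides by $d^{k+1}-q^{k+1}$); and the divisions by $r$ and by $q^j-d^j$ mean the identity is proved in $A^\bullet(X)\otimes\bQ$, so the cleared form $r^kc_k(\E)=\binom{r}{k}c_1(\E)^k$ holds integrally only modulo torsion --- harmless in the toric applications, where the Chow ring is torsion-free, and equally implicit in the paper, whose inductive hypothesis already carries $r^i$ in denominators.
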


The proof is a direct but tedious combinatorial computation and so we postpone it to the Appendix. Theorem~\ref{thm:chernrelations} shows that under suitable hypotheses, the existence of a surjective morphism with interesting properties (the degree on the fibres is not the dynamical degree on the base) imposes restrictive conditions on the Chern classes of the vector bundle. Taking $\E$ to be the tangent bundle (or cotangent bundle) we obtain equations for the Chern classes of the variety itself. In other words, the base variety must have some special geometry in order to satisfy such equations.

\begin{remark}
\label{rem:chern-vs-dual}
We obtain a relationship between the properties of surjective endomorphisms of $\PP(\E)$ and $\PP(\E^\vee)$. Indeed, assume that $\varphi^* c_i(\E) = q^i c_i(\E)$. Using that $c_i(\E^\vee)=c_i(\E)(-1)^i$ we obtain
\begin{equation*}
    c_k(\E) = \frac{\binom{r}{k} c_1(\E)^k}{r^k}\iff  c_k(\E^\vee) = \frac{\binom{r}{k} c_1(\E^\vee)^k}{r^k}.
\end{equation*}
This is interesting because $\PP(\E)$ and $\PP(\E^\vee)$ may have vastly different geometry. For instance, the tangent bundle on a smooth complete toric variety is always a Mori dream space due to \cite[Theorem~5.8]{HausenSuess2010} but the cotangent bundle need not be; see \cite[Theorem~1.5]{GHPS2012}.
\end{remark}

\begin{remark}
\label{rem:chern-toric-hyp}
The assumptions in Theorem~\ref{thm:chernrelations} are satisfied whenever $\varphi^*\L\cong \L^{\otimes d}$ for any line bundles $\L$ and if all the Chern classes $c_k(\E)$ lie in the image of the multiplication map $A^1(X)^{\oplus k} \rightarrow A^k(X)$ defined by $(\L_1, \L_2, \ldots,\L_k) \mapsto \L_1\cdot \L_2 \cdot \cdots \cdot \L_k$. This is true, for instance, if $X_\Sigma$ is an indecomposable smooth projective toric variety. In particular, we always have $\varphi^*\L\cong \L^{\otimes d}$ after iteration, and $A^k(X_\Sigma)$ is always generated by intersection products of line bundles \cite[Proposition~10.3]{Danilov1978toric}.
\end{remark}



Theorem~\ref{thm:chernrelations} allows us to control the dynamics of the projectivization of the tangent and cotangent bundle on projective space.

\begin{example}
Suppose $X = \PP^n$, let $\E = \T_{\PP^n}$, and set $\L = \O_{\PP^n}(1)$. Fix a surjective endomorphism $\varphi \colon \PP^n \rightarrow \PP^n$ of degree $q$ and assume $\psi \colon \PP(\E) \rightarrow \PP(\E)$ is a based map with base map $\varphi$ with relative degree $d \neq q$. Observe that the hypothesis of Theorem~\ref{thm:chernrelations} is satisfied because $A^\bullet(\PP^n) = \CC[c_1(\L)]/ \langle c_1(\L)^{n+1} \rangle$ and $\varphi^*\L\cong \L^{\otimes q}$. Therefore Theorem~\ref{thm:chernrelations} gives us
\begin{equation}
\label{eq:chern-example}
    c_k(\E) = \frac{\binom{n}{k} c_1(\E)^k}{n^k}
\end{equation}
for $k \geq 2$. On the other hand, the Euler exact sequence allows us to compute that $c_i(\E) = \binom{n+1}{i} c_1(\L)^i$. Substituting this into Equation~\eqref{eq:chern-example} for $i = k$ and $i = 1$, then canceling out $c_1(\L)^k$ on both sides, we obtain
\begin{equation}
    \binom{n+1}{k} = \frac{\binom{n}{k}(n+1)^k}{n^k}
\end{equation}
for $2 \leq k \leq n$. We conclude that a based map on $\PP(\T_{\PP^n})$ with base $\varphi$ with degree on the fibres $d \neq q$ only exists if this equality holds.

However, by taking $k = n$ we obtain $n+1 = \bigl( 1+\frac{1}{n} \bigr)^n$ which never holds when $n > 1$. Consequently any based map on $\PP(\T_{\PP^n})$ must have degree on the fibres equal to $q$. Equivalently, the dynamical degree of a based map must be equal to the dynamical degree of its base map.
\end{example}

If the base has Picard number one, we may also control the dynamics of projective bundles using Theorem~\ref{thm:chernrelations}.

\begin{example}\label{ex:PicardNumber1LineBundles}
Suppose that $X$ is smooth and projective with Picard number one and fix an ample line bundle $\L$. Let $\E = \bigoplus_{i=1}^r \L_i$ where $\L_i = \L^{\otimes n_i} \otimes \N_i$ for some algebraically trivial line bundles $\N_i$ and integers $n_i$, with $r \leq \dim X$. Assume we have a based map of $\PP(\E)$ base map being the identity and having relative degree $d > 1$. If at least two of the integers $n_i$ are distinct, then after twisting by a large enough power of $\L$ we may without loss of generality assume that $n_i \geq 0$ for all $i$, $n_1 = 0$, and $n_j > 0$ for some $j$. In this case we have $c_r(\E) = 0$ as $\E$ has a non-vanishing section, while $c_1(\E) \neq 0$ by construction. Therefore Theorem~\ref{thm:chernrelations} allows us to conclude that no such map exists.

On the other hand, if all the $n_i$ are the same then by twisting we may assume that each $\L$ is algebraically trivial. In this case, \cite[Theorem~1.2]{NasserdenZotine2023} implies the relative degree (which is the same as the dynamical degree) is $d = 1$ unless each $\L_i$ is torsion. If each $\L_i$ is torsion, then in fact we can find some based maps; see Example~\ref{ex:torsion-line-bundle} or \cite[Example~6.1]{NasserdenZotine2023}.
\end{example}

\section{The transition function method}
\label{sec:transition-function-method}
Let $\mathbb{K}$ be an algebraically closed field of characteristic zero. Every surjective morphism of $\PP^n$ corresponds to a collection of $n+1$ polynomials in $\mathbb{K}[x_0,x_1,\ldots,x_n]$ which do not have a common zero. In this section we present an analogous surjectivity criterion for the relative setting of a projective bundle $\PP(\E)$ in terms of the transition functions of $\E$. 

The key tool for translating between endomorphisms of projective bundles and polynomial data is the pushforward-pullback adjunction. Given a morphism $\psi: \PP(\E) \to \PP(\E)$ over a base morphism $\varphi: X \to X$, the existence of the commutative square~\eqref{eq:dagger2} is equivalent to having a surjective morphism of sheaves $\Theta: \pi^*\varphi^*\E \to \psi^*\O_{\PP(\E)}(1)$. In general, when $\psi$ has degree $d$ on the fibres, we have $\psi^*\O_{\PP(\E)}(1) \cong \O_{\PP(\E)}(d) \otimes \pi^*\B$ for some line bundle $\B$ on $X$, so $\Theta$ becomes a surjection $\Theta: \pi^*\varphi^*\E \to \O_{\PP(\E)}(d) \otimes \pi^*\B$. The adjunction 
\begin{equation*}
\operatorname{Hom}_{\PP(\E)} \bigl( \pi^*\varphi^*\E,\O_{\PP(\E)}(d) \otimes \pi^*\B \bigr) \cong \operatorname{Hom}_{X} \bigl(\varphi^*\E,\pi_*(\O_{\PP(\E)}(d) \otimes \pi^*\B) \bigr)
\end{equation*}
combined with the projection formula $\pi_* \bigl( \O(d) \otimes \pi^*\B \bigr) = \Sym^d\E \otimes \B$, translates this into a morphism $\theta: \varphi^*\E \to \Sym^d\E \otimes \B$ on the base.

We may explicitly observe the adjunction formula. Given a trivializing open $U \subseteq X$ such that $\E|_U \cong \O_U^{r}$ and $\B|_U \cong \O_U$, we get $\O_{\pi^{-1}(U)} \cong \O_{U \times \PP^r}$. Hence, when we restrict $\Theta$ to $\pi^{-1}(U)$ we get a morphism $\O_{\pi^{-1}(U)}^{r} \rightarrow \O_{\pi^{-1}(U)}(d)$ mapping $\mathbf{e}_i \mapsto f_i = \sum_{|\lambda|=d} a_{i,\lambda} \mathbf{z}^\lambda$ with $a_{i,\lambda} \in \O_U$. Each monomial $\mathbf{z}^\lambda = z_1^{\lambda_1} z_2^{\lambda_2} \cdots z_r^{\lambda_r}$ with $|\lambda| = \lambda_1 + \cdots + \lambda_r = d$ corresponds to the basis element $\mathbf{e}_1^{\lambda_1} \cdots \mathbf{e}_r^{\lambda_r}$ of $\Sym^d \O_U^{r}$. Therefore the polynomial $f_i \in \O_U[z_1, \ldots, z_r]_d$ is canonically identified with the element $\sum_{|\lambda|=d} a_{i,\lambda} \mathbf{e}_1^{\lambda_1} \cdots \mathbf{e}_r^{\lambda_r}$ in $\Sym^d \O_U^{r}$. In other words, the adjunction identifies $\Theta\vert_{\pi^{-1}(U)}$ with $\theta|_U: \O_U^{r} \to \Sym^d \O_U^{r}$ defined by $\mathbf{e}_i \mapsto \sum_{|\lambda|=d} a_{i,\lambda} \mathbf{e}_1^{\lambda_1} \cdots \mathbf{e}_r^{\lambda_r}$.

Consider a point $(p, [v]) \in \PP(\E)$ where $p \in U$ and $[v] = [v_1 : \cdots : v_r] \in \PP^{r-1}$ is a point in the fibre over $p$. By Nakayama's lemma, the morphism $\Theta$ is surjective if and only if the induced map on fibres $\Theta_p \otimes \kappa(p)$ is surjective at every point $p$. Since the target fibre is one-dimensional over the residue field $\kappa(p, [v])$, this map is surjective if and only if at least one of $f_1(p)(v), \ldots, f_r(p)(v)$ is nonzero. Since this must hold for every point $[v] \in \PP^{r-1}$ in the fibre over $p$ and for every $p \in U$, the polynomials $f_1(p), \ldots, f_r(p)$ have no common zero in $\PP^{r-1}$ for any $p \in U$.

Conversely, suppose we have a map $\theta: \varphi^* \E \to \Sym^d \E \otimes \B$ locally defined by polynomials $f_1, \ldots, f_r \in \O_U[z_1, \ldots, z_r]_d$ such that for every $p \in U$, the polynomials $f_1(p), \ldots, f_r(p)$ have no common zero in $\PP^{r-1}$. The adjunction produces a morphism $\Theta: \pi^* \varphi^* \E \to \O_{\PP(\E)}(d) \otimes \pi^* \B$ defined by these same polynomials. At each point $(p, [v]) \in \PP(\E)$, the non-vanishing condition ensures that at least one of $f_1(p)(v), \ldots, f_r(p)(v)$ is nonzero in $\kappa(p, [v])$. Since the target fibre is one-dimensional, the induced map on fibres is surjective at $(p, [v])$. Once again by Nakayama's lemma, since the induced map on fibres is surjective at every point, $\Theta$ is surjective as a morphism of sheaves.

To define a global morphism, the polynomial data on different trivializing opens must satisfy compatibility conditions dictated by the transition functions. This is the content of the following proposition, which was originally proven in  \cite[Proposition~2.6]{NasserdenZotine2023}.

\begin{proposition}
\label{prop:transition function method}
Let $X$ be a normal projective variety over $\mathbb{K}$ and $\E$ a vector bundle on $X$ of rank $r$. Let $\varphi: X \to X$ and $\psi: \PP(\E) \to \PP(\E)$ be surjective morphisms fitting into a commutative square
\begin{equation}
\label{eq:dagger3}
\begin{tikzcd}
\PP(\E) \arrow[r, "\psi"] \arrow[d, "\pi" left] & \PP(\E) \arrow[d, "\pi" right] \\
X \arrow[r, "\varphi"] & X
\end{tikzcd}
\tag{$\dagger$}
\end{equation}
such that $\psi^*\O_{\PP(\E)}(1) \cong \O_{\PP(\E)}(d) \otimes \pi^*\B$ for some $d \geq 1$ and line bundle $\B$. Let $\{U_j\}$ be an open cover of $X$ trivializing $\E$, $\varphi^* \E$, and $\B$. Then, the morphism $\psi$ exists if and only if there exist, for each $j$, an $r$-tuple of degree $d$ polynomials
\[
    \theta_j = (f_{j,1}, f_{j,2}, \ldots, f_{j,r}) \in \O_X(U_j)[z_1, z_2, \ldots, z_r]^{r}
\]
such that:
\begin{enumerate}
    \item For all $p \in U_j$, the polynomials $f_{j,i}(p)$ have no common zero in $\PP^{r-1}$, and
    \item For all $i, j$, the compatibility diagram
\begin{equation}
\label{eq:compatibility}
\begin{tikzcd}
\O_{U_i\cap U_j}^{r} \arrow[r, "\theta_i" above] \arrow[d, "\varphi^* M_{j,i}" left] & \Sym^d \O_{U_i \cap U_j}^{r} \arrow[d, "\beta_{i,j} \Sym^d M_{j,i}" right] \\  
\O_{U_i\cap U_j}^{r} \arrow[r, "\theta_j"] & \Sym^d \O_{U_i \cap U_j}^{r}
\end{tikzcd}
\end{equation}
commutes, where $M_{j,i}$ is the transition function from $U_i$ to $U_j$ for $\E$, $\varphi^* M_{j,i}$ the transition function for $\varphi^* \E$, and $\beta_{j,i}$ the transition function for $\B$.
\end{enumerate}
\end{proposition}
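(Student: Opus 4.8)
The plan is to translate the geometric data of $\psi$ into local polynomial data using the adjunction discussion preceding the proposition, and then to check that the gluing condition for a \emph{global} morphism $\Theta\colon \pi^*\varphi^*\E\to\O_{\PP(\E)}(d)\otimes\pi^*\B$ amounts precisely to the commutativity of diagram~\eqref{eq:compatibility}. Concretely, I would first recall that, by Proposition~\ref{prop:fibreProp}, a based map $\psi$ fitting into \eqref{eq:dagger3} is equivalent, via pullback-pushforward adjunction and the projection formula $\pi_*(\O_{\PP(\E)}(d)\otimes\pi^*\B)\cong\Sym^d\E\otimes\B$, to a surjection of sheaves $\theta\colon\varphi^*\E\to\Sym^d\E\otimes\B$ on $X$ (surjectivity of $\theta$ being equivalent to that of $\Theta$, hence to $\psi$ being a morphism, by the Nakayama's lemma / fibrewise non-vanishing argument already spelled out in the text). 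So the content of the proposition is just the assertion that giving such a $\theta$ is the same as giving local polynomial tuples $\theta_j$ satisfying (1) and (2).

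\smallskip

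The forward direction: given $\psi$, restrict the associated $\theta\colon\varphi^*\E\to\Sym^d\E\otimes\B$ to each trivializing open $U_j$. Using the chosen trivializations of $\E$, $\varphi^*\E$, and $\B$ over $U_j$, the restriction $\theta_j:=\theta|_{U_j}$ becomes a map $\O_{U_j}^r\to\Sym^d\O_{U_j}^r$, which, under the canonical identification of $\Sym^d\O_{U_j}^r$ with degree-$d$ forms in $z_1,\dots,z_r$ described in the preceding paragraphs, is exactly an $r$-tuple $(f_{j,1},\dots,f_{j,r})$ of degree-$d$ polynomials with coefficients in $\O_X(U_j)$. Condition (1) is the fibrewise non-vanishing criterion, which holds because $\psi$ is a genuine morphism (equivalently $\theta$ is surjective), and condition (2) is just the statement that $\theta$ is well-defined as a global sheaf map: on overlaps $U_i\cap U_j$ the two local expressions must agree after applying the transition functions, i.e. $\varphi^*\E$ glues by $\varphi^*M_{j,i}$ on the source and $\Sym^d\E\otimes\B$ glues by $\beta_{i,j}\Sym^d M_{j,i}$ on the target, which is precisely diagram~\eqref{eq:compatibility}. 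The reverse direction: given local tuples $\theta_j$ satisfying (1) and (2), condition (2) says the locally defined maps $\O_{U_j}^r\to\Sym^d\O_{U_j}^r$ patch to a global $\theta\colon\varphi^*\E\to\Sym^d\E\otimes\B$; condition (1) guarantees fibrewise surjectivity, hence $\theta$ is surjective by Nakayama, hence (undoing the adjunction) it produces a surjection $\Theta\colon\pi^*\varphi^*\E\to\O_{\PP(\E)}(d)\otimes\pi^*\B$, which by the universal property of $\PP(\E)$ as $\mathrm{Proj}$ of the symmetric algebra — together with the fact that $\Theta$ covers $\varphi$ on the base — determines the desired morphism $\psi$ with $\psi^*\O_{\PP(\E)}(1)\cong\O_{\PP(\E)}(d)\otimes\pi^*\B$.

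\smallskip

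The main thing to get right — the only real subtlety — is bookkeeping the transition functions correctly: one must be careful about which direction the transition matrices act, that $\Sym^d$ is applied functorially to $M_{j,i}$ (so that monomials transform as the $d$-th symmetric power of the vector transformation rather than naively), and that the scalar $\beta_{i,j}$ for $\B$ enters only on the target. I would verify the cocycle compatibility on triple overlaps to confirm the $\theta_j$ indeed glue, though this is routine once the pairwise condition is in place. Since the proposition is quoted as originally appearing in \cite[Proposition~2.6]{NasserdenZotine2023}, I would keep the argument brief, emphasizing that everything reduces to the adjunction and projection-formula identifications already established in this section, with \eqref{eq:compatibility} being nothing more than the sheaf-gluing cocycle condition written out in the chosen trivializations.
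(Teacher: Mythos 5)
Your proposal is correct and follows essentially the same route as the paper's proof: the equivalence between the square \eqref{eq:dagger3} and a surjection $\Theta\colon\pi^*\varphi^*\E\to\O_{\PP(\E)}(d)\otimes\pi^*\B$ (the paper cites \cite[Proposition~7.12]{hartshorne} for this, not Proposition~\ref{prop:fibreProp}), adjunction plus the projection formula to pass to $\theta\colon\varphi^*\E\to\Sym^d\E\otimes\B$, restriction to the trivializing opens to obtain the polynomial tuples, the fibrewise non-vanishing/Nakayama argument for condition (1), and the sheaf-gluing interpretation of \eqref{eq:compatibility} for condition (2), with the converse reassembling $\theta$ and hence $\psi$. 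One slip in wording: surjectivity lives at the level of $\Theta$ onto the line bundle (equivalently, the non-vanishing of the $f_{j,i}$ on fibres), not of $\theta$ itself, which for $d\geq 2$ maps a rank-$r$ bundle to a bundle of strictly larger rank and so is never surjective; since your condition (1) tracks the correct criterion, this does not affect the argument.
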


\begin{remark}
\label{rem:compatibility}
The operator $\Sym^d M$ acts on a polynomial $f \in \O_U[z_1, z_2, \ldots, z_r]$ by $(\Sym^d M)(f) = f \circ M^{\textsf{T}}$. This is because the identification $\Sym^d \O_U^{r} \cong \O_U[z_1, z_2, \ldots, z_r]$ realizes polynomials as functions on the dual bundle $\E^\vee|_{U}$. This means the transition function of $\Sym^d \E$ from $U$ to $V$ is the pullback of the transition function of $\E^\vee$ from $V$ to $U$. The order of the open sets reverses because dualizing reverses the direction of arrows. The formula then follows because the transition function for $\E^\vee$ from $V$ to $U$ is $M^{\textsf{T}}$.
\end{remark}

\begin{proof}[Proof of Proposition~\ref{prop:transition function method}]
By \cite[Proposition~7.12]{hartshorne}, the existence of the commutative square \eqref{eq:dagger3} is equivalent to the existence of a surjective morphism of sheaves $\Theta: \pi^*\varphi^*\E \twoheadrightarrow \psi^*\O_{\PP(\E)}(1)\cong \O_{\PP(\E)}(d) \otimes \pi^*\B$. As explained in the discussion above, by the pullback-pushforward adjunction, $\Theta$ corresponds to a unique morphism
\[
    \theta: \varphi^*\E \to \Sym^d\E \otimes \B.
\]
Since we chose $U_j$ to trivialize all three bundles, we may consider the restrictions
\[
    \theta_j:= \theta|_{U_j}: \O_{U_j}^{r} \to \Sym^d \O_{U_j}^{r}.
\]
We can identify $\Sym^d \O_{U_j}^{r}$ with $\O_X(U_j)[z_1, z_2, \ldots, z_r]_d$, where the variables $z_i$ are dual to the standard basis of $\E|_{U_j}$ and are independent of the choice of open set. The map $\theta_j$ is determined by where it sends the standard basis vectors: $\theta_j(\mathbf{e}_k) = f_{j,k}$ for degree $d$ homogeneous polynomials $f_{j,k} \in \O_X(U_j)[z_1, z_2, \ldots, z_r]$. By the discussion preceding this proposition, the surjectivity of $\Theta$ over $\pi^{-1}(U_j)$ is equivalent to the polynomials $f_{j,1}, \ldots, f_{j,r}$ having no common zero in the fibre over any $p \in U_j$.

Finally, because the $\theta_j$ arise as restrictions of a global morphism $\theta$, they must satisfy compatibility conditions on overlaps. On $U_i \cap U_j$, the transition functions relate the trivializations, and the fact that $\theta$ is a global section of $\mathcal{H}om(\varphi^*\E, \Sym^d\E \otimes \B)$ ensures that diagram \eqref{eq:compatibility} commutes.

Conversely, given a collection of polynomial tuples $\theta_j:\O_{U_j}^{r} \to \Sym^d \O_{U_j}^{r}$ satisfying the non-vanishing and compatibility conditions, they glue to define a global morphism $\theta: \varphi^*\E \to \Sym^d\E \otimes \B$. By the adjunction, this corresponds to a morphism $\Theta: \pi^*\varphi^*\E \to \O_{\PP(\E)}(d) \otimes \pi^*\B$, and the non-vanishing condition ensures that $\Theta$ is surjective, yielding the desired morphism $\psi$.
\end{proof}

\section{The split case}
\label{sec:split-case}
In this section, we aim to classify based maps for projectivizations of split bundles. We begin with the general setting, then specialize to the base-toric case and give explicit examples. Let $X$ be a smooth projective variety, fix line bundles $\L_1, \L_2 \ldots, \L_r$ on $X$, and set $\E=\bigoplus_{k=1}^r\L_k$. Choose an open cover $\{U_{i}\}$ of $X$ trivializing $\E$ (and hence each $\L_k$). Following the conventions in the literature, such as \cite[Section~1.2]{LePotier1997} or \cite[Chapter 6]{CLSToric}, we use $M_{j\leftarrow i}^{(k)} = M_{ji}^{(k)}$ to denote the transition function of $\L_k$ from $U_i$ to $U_j$, so that the cocycle condition is $M_{\ell j}^{(k)} M_{ji}^{(k)} = M_{\ell i}^{(k)}$. Let $\varphi \colon X \rightarrow X$ be a surjective morphism such that $\varphi^*\L_k\cong \L_k^{\otimes q}$ for each $k$ and some nonnegative integer $q$. We aim to classify based maps $\psi \colon \PP(\E) \rightarrow \PP(\E)$ with base map $\varphi$ and relative degree $d$. This means we have a commutative diagram
\begin{equation}
\begin{tikzcd}[ampersand replacement = \&]
    \PP(\E) \arrow[r, "\psi"] \arrow[d, "\pi" left] \& \PP(\E) \arrow[d, "\pi" right] \\
    X \arrow[r, "\varphi" below] \& X.
\end{tikzcd}
\tag{$\dagger$}
\end{equation}
Equation~\ref{eq:compatibility} in Proposition~\ref{prop:transition function method} specifies that, for each trivializing opens $U_i$ and $U_j$ with $j\neq i$, we need $r$ degree $d$ homogeneous polynomials $f_{i,\ell} \in \O_X(U_i)[z_1,\ldots,z_r]$ for $1 \leq \ell \leq r$ satisfying that
\[
    \Sym^d M_{j,i}(\theta_i) = \left(\Sym^d M_{j,i}(f_{i,1}),\dots,\Sym^d M_{j,i}(f_{i,r})\right) = \theta_j \left(\varphi^* M_{j,i}\right),
\]
since we are supposing that $\mathcal{B}=\mathcal{O}_X$ and hence $\beta_{j,i}$ is the identity map. Then, following Remark~\ref{rem:compatibility}, we know that 
\[
    \Sym^d M_{j,i}(f_{i,\ell}) = f_{i,\ell} \left( z_1 M_{ji}^{(1)}, z_2 M_{ji}^{(2)}, \ldots, z_r M_{ji}^{(r)} \right).
\]
Putting these together we obtain that, for all $i\neq j$, 
\begin{equation*}
    f_{i,\ell} \left( z_1 M_{ji}^{(1)}, z_2 M_{ji}^{(2)}, \ldots, z_r M_{ji}^{(r)} \right) = \left( M_{ji}^{(\ell)} \right)^q f_{j,\ell},
\end{equation*}
since the transition matrix of $\E$ from $U_i$ to $U_j$ is the diagonal matrix $\operatorname{diag} \bigl( M_{ji}^{(1)}, M_{ji}^{(2)}, \ldots,M_{ji}^{(r)} \bigr)$, and $\varphi^*\mathcal{L}_{\ell} = \mathcal{L}^{\otimes q}_\ell$. Note that $\theta_j$ acts on $\varphi^* M_{j,i}$ via left matrix multiplication as a row vector.

Writing 
\[
    f_{i,\ell} = \sum_{\vert \lambda\vert=d}a_{i\ell,\lambda} \mathbf{z}^\lambda,
\]
where $a_{i\ell,\lambda} \in \O_X(U_i)$, we have explicitly that
\[
    f_{i,\ell} \bigl( z_1 M_{ji}^{(1)}, z_2 M_{ji}^{(2)}, \ldots, z_r M_{ji}^{(r)} \bigr) = \sum_{\vert \lambda\vert=d} a_{i\ell,\lambda} \bigl( \textbf{M}_{ji} \bigr)^\lambda \mathbf{z}^\lambda,
\]
where $\bigl( \textbf{M}_{ji} \bigr)^\lambda$ denotes the product $\prod_{k=1}^r \bigl( M_{ji}^{(k)} \bigr)^{\lambda_i}$. Now fix $1\leq \ell \leq r$ and some exponent vector $\lambda$. Then, we obtain that 
\[
    \bigl( \textbf{M}_{ji} \bigr)^\lambda a_{i\ell,\lambda} = \bigl( M_{ji}^{(\ell)} \bigr)^q a_{j \ell,\lambda}.
\]
In other words, the collection of sections $\{a_{i\ell,\lambda}\}_i$ is the gluing data of a global section of the line bundle $\L^\lambda\otimes \L_\ell^{\otimes -q}$, where $\L^\lambda \coloneqq \bigotimes_{k=1}^r\L^{\lambda_i}$. Let us summarize this discussion as a theorem:
\begin{theorem}
\label{thm:linebundlecase}
    Let $X$ be a smooth projective variety, suppose $\E = \bigoplus_{i=1}^r \L_i$ is a direct sum of line bundles on $X$, and fix an open cover $\{U_i\}$ of $X$ trivializing $\E$. Assume we have a base map $\varphi \colon X \rightarrow X$ satisfying $\varphi^* \L \cong \L^{\otimes q}$ for some integer $q \geq 0$. Then, describing a based map $\psi$ of $\PP(\E)$ with relative degree $d$ is equivalent to specifying a collection of global sections $a_{\ell,\lambda} \in \Gamma \bigl( X, \L^\lambda\otimes \L_\ell^{\otimes -q} \bigr)$ for every $1 \leq \ell \leq r$ and any $\lambda$ a partition of $d$ such that, for each $i=1,\dots,r$, the polynomials
    \[
    f_{i,\ell} = \sum_{\vert \lambda\vert=d} (a_{\ell,\lambda}\vert_{U_i})\mathbf{z}^\lambda,\qquad 1\leq \ell \leq r,
    \]
    do not have a common zero on $U_i \times \PP^{r-1}$.
\end{theorem}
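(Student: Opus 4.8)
The plan is to read off Theorem~\ref{thm:linebundlecase} as the specialization of the surjectivity criterion of Proposition~\ref{prop:transition function method} to a split bundle, following the computation laid out in the paragraphs preceding the statement. First I would fix the trivializing cover $\{U_i\}$ and record the shape of the relevant transition matrices: for $\E = \bigoplus_{k=1}^r \L_k$ the transition function from $U_i$ to $U_j$ is $\operatorname{diag}(M_{ji}^{(1)}, \ldots, M_{ji}^{(r)})$, and since $\varphi^* \L_k \cong \L_k^{\otimes q}$ the transition function of $\varphi^* \E$ is $\operatorname{diag}((M_{ji}^{(1)})^q, \ldots, (M_{ji}^{(r)})^q)$. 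By Remark~\ref{rem:compatibility} the operator $\Sym^d M_{j,i}$ is precomposition with the transpose of $M_{j,i}$; for a diagonal matrix this is just the substitution $z_k \mapsto z_k M_{ji}^{(k)}$. I would also note that we work in the normalization $\B = \O_X$ used throughout this section, so that the factor $\beta_{i,j}$ in diagram~\eqref{eq:compatibility} is trivial.

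With these identifications, diagram~\eqref{eq:compatibility} becomes, for each $1 \le \ell \le r$, the scalar relation $f_{i,\ell}(z_1 M_{ji}^{(1)}, \ldots, z_r M_{ji}^{(r)}) = (M_{ji}^{(\ell)})^q f_{j,\ell}$. Writing $f_{i,\ell} = \sum_{|\lambda| = d} a_{i\ell,\lambda} \mathbf{z}^\lambda$ and extracting the coefficient of each monomial $\mathbf{z}^\lambda$ gives $\bigl(\prod_k (M_{ji}^{(k)})^{\lambda_k}\bigr) a_{i\ell,\lambda} = (M_{ji}^{(\ell)})^q a_{j\ell,\lambda}$. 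I would then recognize $\prod_k (M_{ji}^{(k)})^{\lambda_k}$ and $(M_{ji}^{(\ell)})^{-q}$ as exactly the transition functions of $\L^\lambda$ and $\L_\ell^{\otimes -q}$, so that this relation is precisely the cocycle condition (using the identity $M_{\ell j}^{(k)} M_{ji}^{(k)} = M_{\ell i}^{(k)}$ recorded in the text) saying that $\{a_{i\ell,\lambda}\}_i$ patches to a global section $a_{\ell,\lambda} \in \Gamma(X, \L^\lambda \otimes \L_\ell^{\otimes -q})$. Running the computation in reverse shows that any such collection of global sections yields, via $f_{i,\ell} := \sum_\lambda (a_{\ell,\lambda}|_{U_i}) \mathbf{z}^\lambda$, tuples $\theta_i$ making diagram~\eqref{eq:compatibility} commute.

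It then remains to match condition~(1) of Proposition~\ref{prop:transition function method}, namely that $f_{i,1}(p), \ldots, f_{i,r}(p)$ have no common zero in $\PP^{r-1}$ for each $p \in U_i$, with the no-common-zero hypothesis on $U_i \times \PP^{r-1}$ in the statement; these are the same condition. Feeding all of this into Proposition~\ref{prop:transition function method} gives the asserted equivalence between based maps $\psi$ of relative degree $d$ and collections of sections $a_{\ell,\lambda}$ with the stated non-vanishing property.

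I do not anticipate a genuine obstacle: the proof is pure bookkeeping once Proposition~\ref{prop:transition function method} is available. The one place to be careful is keeping straight the direction of the transition functions and the transpose in Remark~\ref{rem:compatibility}, so that the coefficient comparison produces the cocycle for $\L^\lambda \otimes \L_\ell^{\otimes -q}$ with the correct exponents rather than for a dual or differently-indexed line bundle; a minor secondary point is to flag explicitly the standing assumption $\B = \O_X$.
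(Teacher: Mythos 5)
Your proposal is correct and follows essentially the same route as the paper: the paper proves Theorem~\ref{thm:linebundlecase} exactly by specializing Proposition~\ref{prop:transition function method} to the diagonal transition matrices of the split bundle (with $\B = \O_X$, as you flag), using Remark~\ref{rem:compatibility} to turn $\Sym^d M_{j,i}$ into the substitution $z_k \mapsto z_k M_{ji}^{(k)}$, and comparing monomial coefficients to recognize the cocycle for $\L^\lambda \otimes \L_\ell^{\otimes -q}$. No gaps; your cautionary remarks about the transpose and the direction of the transition functions are precisely the bookkeeping points the paper handles.
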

This idea has already had fruitful applications in the study of surjective endomorphisms of projective bundles on elliptic curves; see \cite[Section~5]{NasserdenZotine2023}.

\begin{example}
\label{ex:frobenius-on-Fn}
Take $X = \PP^1$ and $\E = \O_{\PP^1} \oplus \O_{\PP^1}(n)$ for some positive integer $n > 0$, so that the projectivization $\PP(\E) = \mathbb{F}_n$ is the $n$\textsuperscript{th} Hirzebruch surface. The projection map $\pi:\mathbb{F}_n\to\PP^1$ is a toric map between toric varieties. Moreover, for every positive integer $d$, the map $\pi$ commutes with the $d$\textsuperscript{th} toric Frobenius map $[d]$ on the base and the bundle (see Definition~\ref{def: mult by d map}).
Let us study this example in the framework of Theorem~\ref{thm:linebundlecase}.
   
In this case, since $[d]^* \L = \L^{\otimes d}$ we have that $q = d$. For every pair of nonnegative integers $\lambda = (\lambda_1, \lambda_2)$ with $\lambda_1+\lambda_2=d$, we need to select two global sections
\[
    a_{1,\lambda} \in \Gamma \left(X, \O_{\PP^1}^{\otimes \lambda_1} \otimes \O_{\PP^1}(n)^{\otimes \lambda_2} \otimes \O_{\PP^1}^{\otimes -d} \right) 
    \qquad \text{and} \qquad 
    a_{2,\lambda} \in \Gamma \left(X, \O_{\PP^1}^{\otimes \lambda_1} \otimes \O_{\PP^1}(n)^{\otimes \lambda_2} \otimes \O_{\PP^1}(n)^{\otimes -d} \right).
\]
This is the same as choosing global sections
\begin{align*}
    a_{1,\lambda} \in \Gamma \bigl(X, \O_{\PP^1}(\lambda_2 n) \bigr) && \text{and} && a_{2,\lambda} \in \Gamma \bigl(X, \O_{\PP^1}(\lambda_2 n - dn) \bigr).
\end{align*}
There are two natural choices of global sections: when $\lambda_2 = 0$, the left bundle is trivial and so we can choose $a_{1,\lambda} = 1$. When $\lambda_2 = d$, the right bundle is trivial so we can choose $a_{2,\lambda} = 1$. Setting every other $a_{j,\lambda} = 0$ for $j = 1, 2$ and every $\lambda$, we get that $f_{i,1} = z_1^d$ and $f_{i,2} = z_2^d$ for each $i$ indexing an open set in our cover. These two polynomials do not have a common zero on the $\PP^1$-fibres, since $f_{i,1} = f_{i,2} = 0$ implies $z_1 = z_2 = 0$ which is not a point in $\PP^1$. It follows that this is a valid choice according to Theorem~\ref{thm:linebundlecase}, hence defining a based map $\psi$ with base map $\varphi$. But we also observe that this choice of $f_{i,1}$ and $f_{i,2}$ describes the $d$\textsuperscript{th} toric Frobenius map on the fibres.
\end{example}

If $X$ admits nontrivial torsion line bundles, then we are always able to construct non-trivial based maps.

\begin{example}
\label{ex:torsion-line-bundle}
Suppose $\L$ is a 2-torsion line bundle on $X$, set $\E = \O_{X} \oplus \L$, and let $d = 2$, matching the torsion degree. Let $\varphi$ be the identity on $X$, so that $q = 1$. Similar to the previous example, we need to specify global sections
\begin{align*}
    a_{1,(2,0)} \in \Gamma \bigl(X, \O_X \bigr) && a_{1,(1,1)} \in \Gamma \bigl(X, \L \bigr) && a_{1,(0,2)} \in \Gamma \bigl(X, \L^{\otimes 2} \bigr) \\
    a_{2,(2,0)} \in \Gamma \bigl(X, \L^{\otimes -1} \bigr) && a_{2,(1,1)} \in \Gamma \bigl(X, \O_X \bigr) && a_{2,(0,2)} \in \Gamma \bigl(X, \L \bigr).
\end{align*}
For example it is possible to choose the constant sections $a_{1,(2,0)} = a_{1,(0,2)} = a_{2,(1,1)} = 1$ and all others zero. This gives the polynomials $f_{i,1} = z_1^2 + z_2^2$, $f_{i,2} = z_1 z_2$ for each $i$ indexing an open set in our cover. As these have no common zero on the fibres, we obtain a based morphism of $\PP(\E)$ with base map being the identity and given by $[f_1:f_2]$ on the fibres.





One observes that this map behaves exactly like a map on $X \times \PP^1$ since the sections are arising from powers of the torsion line bundle. In particular, this has no dependence on the base variety $X$.
\end{example}

\subsection{Specialization to smooth projective toric varieties}

Let $X_\Sigma$ an $n$-dimensional smooth projective toric variety associated to a fan $\Sigma$, with $\Sigma(1)$ denoting the collection of rays of $\Sigma$. Fix equivariant line bundles $\L_1,\ldots \L_r$ on $X_\Sigma$ and set $\E=\bigoplus_{k=1}^r\L_k$.  As exemplified in Example~\ref{ex:frobenius-on-Fn}, the bundle $\PP(\E)$ always admits surjective endomorphisms due to its toric structure. We seek to describe a wide class of surjective endomorphisms using Theorem~\ref{thm:linebundlecase} in more detail. 

Recall that there is a one-to-one correspondence between equivariant line bundles $\L$ onf $X_\Sigma$ and piecewise-linear continuous maps $a_\L\colon \vert \Sigma\vert \rightarrow \bR$. This defines a polytope
\begin{equation*}
    P_\L:=\{m\in M_\bR\colon a_\L(m)\leq \langle m,u\rangle\textnormal{ for all }u\in \vert\Sigma\vert\},
\end{equation*}
and moreover we obtain
\begin{equation*}
    H^0(X_\Sigma,\L)=\bigoplus_{m\in P_\L\cap M}\bC\cdot \chi^m.
\end{equation*}

Now fix $d\geq 1$ and let $\varphi \colon X_\Sigma\ra X_\Sigma$ be multiplication by $d$ map. By Theorem~\ref{thm:linebundlecase}, constructing a surjective endomorphism $\psi:\PP(\E)\to\PP(\E)$ amounts to choosing sections $a_{k,\lambda}\in H^0(X_\Sigma,\L^\lambda\otimes\L_k^{-d})$ such that the polynomials 
\[
    f_{\ell} = \sum_{\vert \lambda\vert=d} a_{\ell,\lambda} \mathbf{z}^\lambda, \qquad 1 \leq \ell \leq r,
\]
do not have a common zero. In the toric setting, each $a_{\ell,\lambda}$ is a linear combination of monomials $\chi^{\bf{v}}$ as $\bf{v}$ ranges along the lattice points of $P_{\L^\lambda \otimes \L_\ell^d} \cap M$ for every $1 \leq \ell \leq r$. Finding the lattice points of these polytopes is a potentially computationally intensive, yet combinatorially interesting task. The configuration of the line bundles can significantly limit the options for a non-vanishing collection. Taking $f_{k} = z_k^d$ is always one option; this corresponds to the multiplication by $d$ map on $\PP(\E)$, as we have seen in Example~\ref{ex:frobenius-on-Fn}. On the other hand, one may ask if this is the only option. Indeed, this happens for the tangent bundle of $(\PP^1)^n$ as we see in the following example, but does not happen for other split bundles; see Example~\ref{ex:PP2-partial-classification}.

\begin{question}
    Let $X_\Sigma$ be a smooth projective toric variety and $\E$ an equivariant split vector bundle over $X_\Sigma$. When does $\PP(\E)$ admit a based map with base map $[d]:X_\Sigma\to X_\Sigma$, other than the $d$\textsuperscript{th} toric Frobenius map itself?
\end{question}

\begin{example}
\label{ex:PP1-classification}
Set $X=\left(\PP^1\right)^n$. We use Theorem~\ref{thm:linebundlecase} to analyze the base-toric maps of $\PP(\T_X)$, that is, based maps where the base map $\varphi$ is a toric morphism. In that case, a toric morphism of $X$ is of the form $\varphi = \sigma \circ (\varphi_1 \times \ldots \times \varphi_n)$, where $\varphi_i\colon \PP^1\ra \PP^1$ is of degree $d_i\geq 1$ and $\sigma$ is a permutation in $\mathfrak{S}_n$. Without loss of generality we may assume the permutation is trivial---it is just permuting the factors and does not affect our computation. Since $\T_X=\pi_1^*O_{\PP^1}(2)\oplus\cdots\oplus \pi_n^*O_{\PP^1}(2)$, constructing a based map of $\PP(\T_X)$ with relative degree $d$ is equivalent to specifying a collection of global sections 
\[
    a_{\ell,\lambda} \in H^0 \bigl(X, \O_X(2\lambda_1\mathbf{e}_1+\cdots+2\lambda_n\mathbf{e}_n)\otimes \varphi^*\O_X(-2\mathbf{e}_\ell)\bigr)
\]
for each collection $\lambda=(\lambda_1,\dots,\lambda_n)$ of positive integers with $|\lambda|=\lambda_1+\cdots+\lambda_n=d$, and such that, for each $i=1,\dots,n$, the polynomials
\[
        f_{\ell} = \sum_{\vert \lambda\vert=d} a_{\ell,\lambda}\mathbf{z}^\lambda,\qquad 1\leq \ell \leq n,
\]
do not have a common zero on any fibre. Since pullbacks commute with direct sums, we have that $\varphi^*\O_X(-2\mathbf{e}_\ell) =\O_X(-2d_\ell\mathbf{e}_\ell)$. Thus, we are looking at global sections of 
\[
    \bigoplus_{\ell=1}^n\bigoplus_{|\lambda|=d}\O_X\left(\sum_{i=1}^n2\lambda_i\mathbf{e}_i-2d_\ell\mathbf{e}_\ell\right).
\]
The sections of the $\ell$\textsuperscript{th} summand corresponding to a given $\lambda$ are the coefficients of the the monomial $\mathbf{z}^\lambda$ in the polynomial $f_\ell$ defined above. For any fixed $\ell$ and $\lambda$, K\"unneth's formula gives
\[
    H^0\left(X,\O_X\left(\sum_{i=1}^n 2\lambda_i\mathbf{e}_i-d_\ell\mathbf{e}_\ell\right)\right) = H^0 \bigl(\PP^1,\O_{\PP^1}(2\lambda_\ell-2d_\ell) \bigr) \otimes \left(\bigotimes_{i\neq \ell}^n H^0(\PP^1, \O_{\PP^1}(2\lambda_i))\right)
\]
By Theorem~\ref{thm:linebundlecase}, the only nonzero monomials of $f_\ell$ are those for which $H^0\bigl(X,\O_{\PP^1}(2\lambda_\ell-2d_\ell)\bigr)\neq 0$, which only happens whenever $\lambda_\ell \geq d_\ell$. Since $|\lambda| = d$, we must conclude that $d \geq d_\ell$ or else $f_\ell = 0$ and the collection $f_1, \ldots, f_n$ would automatically have a common zero by B\'ezout's Theorem. Furthermore, since $d_\ell \geq 1$ we get that $f_\ell$ is divisible by $z_\ell$ for all $\ell$. In particular, on any given fibre, we have that $f_1,\dots,f_{\ell-1},f_{\ell+1},\dots,f_n$ vanish at the point $z_1 = z_2 = \cdots = z_{l-1} = z_{l+1} = \cdots = z_n = 0$ and $z_\ell = 1$. Moreover, the coefficient of the monomial $z_\ell^{d_\ell}$ of $f_\ell$ must be nonzero, as otherwise $f_\ell$ would additionally vanish at this point and hence all of $f_1,\dots,f_n$ would vanish at a common point, contradicting the existence of $\psi$. 

Finally, suppose that we had a strict inequality $d > d_\ell$ for some $\ell$, say without loss of generality $\ell = 1$. By our discussion above, we know that each of $f_2, f_3, \ldots, f_n$ vanish at the point $[1 \colon 0 \colon \cdots \colon 0]$, and $f_1$ does not. In particular, we have that $f_1\bigl( [1 \colon 0 \colon \cdots \colon 0] \bigr) = A_1$ for some global section $A_1 \in \O_{\PP^1}(2d - 2d_\ell)$. However, since $2d - 2d_\ell > 0$ by assumption, there is a point $Q \in \PP^1$ where $A$ vanishes. In other words, in the fibre above $[1 \colon 0 \colon \cdots \colon 0]$, all of the polynomials $f_1, f_2, \ldots, f_n$ vanish at the point $Q$, contradicting the existence of $\psi$. Consequently we conclude that $d = d_\ell$ for all $\ell$ and therefore $f_\ell = A_\ell z_\ell^d$ for some constant $A_\ell$.

In conclusion, the only base-toric maps $\psi \colon \PP(\T_X) \rightarrow \PP(\T_X)$ with base map $\varphi = \varphi_1 \times \cdots \times \varphi_n$ are those for which the degree of $\varphi_i$ are all equal to the same integer $d$ and $\psi$ restricted to any fibre is the $d$\textsuperscript{th} toric Frobenius map. One should observe that virtually the same argument works for the cotangent bundle, but the inequalities involved will flip.
\end{example}

\begin{example}
\label{ex:PP2-partial-classification}
Let $X = \PP^2$ and consider the bundle $\E=\O_{\PP^2}(\red{0})\oplus\O_{\PP^2}(\green{2})\oplus\O_{\PP^2}(\blue{4})$. We construct a large family of based maps of $\PP(\E)$ with base map being the $d$\textsuperscript{th} toric Frobenius (so $q = 2$) and relative degree $d = 2$. Set
\begin{align*}
    f_3 &= c_3 z_3^2 \\
    f_2 &= a(t_0,t_1,t_2) z_3^2 + b(t_0,t_1,t_2) z_2 z_3 + c_2 z_2^2 \\
    f_1 &= c(t_0,t_1,t_2) z_3^2 + d(t_0,t_1,t_2) z_2^2 + e(t_0,t_1,t_2) z_2 z_3 + f(t_0,t_1,t_2) z_1 z_3 + g(t_0,t_1,t_2) z_1 z_2 + c_1 z_1^2
\end{align*}
where (for clarity, we have used colors to identify line bundle degrees)
\begin{align*}
    a & \in \Gamma \bigl(\PP^2, \O_{\PP^2}(\blue{4} \cdot 2 - 2q) \bigr) = \Gamma \bigl( \PP^2, \O_{\PP^2}(4) \bigr), && b \in \Gamma \bigl(\PP^2, \O_{\PP^2}(\blue{4} \cdot 1 + \green{2} \cdot 1 - 2q) \bigr) = \Gamma \bigl( \PP^2, \O_{\PP^2}(2) \bigr), \\ 
    c & \in \Gamma \bigl(\PP^2, \O_{\PP^2}(\blue{4} \cdot 2 - 0q) \bigr) = \Gamma \bigl( \PP^2, \O_{\PP^2}(8) \bigr), && d \in \Gamma \bigl(\PP^2, \O_{\PP^2}(\green{2} \cdot 2 - 0q) \bigr) = \Gamma \bigl( \PP^2, \O_{\PP^2}(4) \bigr), \\ 
    e & \in \Gamma \bigl(\PP^2, \O_{\PP^2}(\blue{4} \cdot 1 + \green{2} \cdot 1 - 0q) \bigr) = \Gamma \bigl( \PP^2, \O_{\PP^2}(6) \bigr), && f \in \Gamma \bigl(\PP^2, \O_{\PP^2}(\blue{4} \cdot 1 + \red{0} \cdot 1 - 0q) \bigr) = \Gamma \bigl( \PP^2, \O_{\PP^2}(4) \bigr), \\ 
    g & \in \Gamma \bigl(\PP^2, \O_{\PP^2}(\green{2} \cdot 1 + \red{0} \cdot 1 - 0q) \bigr) = \Gamma \bigl( \PP^2, \O_{\PP^2}(2) \bigr), && ~
\end{align*}
and $c_1, c_2, c_3$ are constants. By construction, $f_1, f_2$, and $f_3$ have no common zero due to the lower-triangular structure and assumption that $c_1, c_2, c_3$ are constant. Therefore, by Theorem~\ref{thm:linebundlecase}, this specifies the data of a based map of $\PP(\E)$ with base map being the $d$\textsuperscript{th} toric Frobenius on $\PP^2$ and relative degree $2$. Computing the dimensions of the space of polynomials to choose from we have $45 \cdot 28^2 \cdot 15^3 \cdot 6 = 714, 420, 000$ degrees of freedom in this family.
\end{example}

\subsection{Explicit classifications of morphisms on Hirzeburch surfaces}

Fix an integer $n \geq 1$ and let $\mathbb{F}_n=\PP(\O_{\PP^1}\oplus \O_{\PP^1}(n))$ denote the $n$\textsuperscript{th} Hirzebruch surface. In this subsection we classify all surjective morphisms of $\mathbb{F}_n$.

\begin{lemma}
\label{lem:HirzebruchIsBased}
Let $n>0$ and $\psi\colon \mathbb{F}_n\ra \mathbb{F}_n$ be a surjective morphism. Then, $\psi$ is a based map. 
\end{lemma}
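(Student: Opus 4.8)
The plan is to show directly that the composite $\pi\circ\psi\colon\mathbb{F}_n\to\mathbb{P}^1$ factors through $\pi$, which is exactly the content of Definition~\ref{defn:based-maps}. The whole point is a positivity observation peculiar to the case $n>0$: among the rays of $\operatorname{Nef}(\mathbb{F}_n)$, the class of a $\pi$-fibre is the unique one with vanishing self-intersection, and $\psi^*$ must send it to a positive multiple of itself.

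Concretely, I would first recall the standard numerical geometry of the Hirzebruch surface: $N^1(\mathbb{F}_n)=\operatorname{Pic}(\mathbb{F}_n)=\mathbb{Z}C_0\oplus\mathbb{Z}f$, where $f$ is the class of a fibre of $\pi$ and $C_0$ is the class of the section with $C_0^2=-n$, and where $f^2=0$, $C_0\cdot f=1$, and $\operatorname{Nef}(\mathbb{F}_n)$ is spanned by $f$ and $C_0+nf$ (see, e.g., \cite[V,~\S2]{hartshorne} or \cite[Chapter~7]{CLSToric}). Now $\psi^*f$ is again nef, since pullbacks of nef classes are nef; and $(\psi^*f)^2=0$, because $\psi^*f=(\pi\circ\psi)^*(\mathrm{pt})$ is linearly equivalent to a divisor supported on a different fibre of $\pi\circ\psi$, hence disjoint from $\psi^*f$ itself. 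Writing a general nef class as $a f+b(C_0+nf)$ with $a,b\ge 0$, its self-intersection equals $b(2a+bn)$, which for $n>0$ vanishes only if $b=0$ or $a=b=0$. Since $\psi^*f\ne 0$ (the morphism $\pi\circ\psi$ is nonconstant), this forces $\psi^*f=cf$ for some integer $c\ge 1$; equivalently $\psi^*\mathcal{O}_{\mathbb{F}_n}(f)\cong\mathcal{O}_{\mathbb{F}_n}(cf)$.

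From here the argument is formal. Using $\mathcal{O}_{\mathbb{F}_n}(f)=\pi^*\mathcal{O}_{\mathbb{P}^1}(1)$ we get $(\pi\circ\psi)^*\mathcal{O}_{\mathbb{P}^1}(1)\cong\pi^*\mathcal{O}_{\mathbb{P}^1}(c)$, and the projection formula together with $\pi_*\mathcal{O}_{\mathbb{F}_n}=\mathcal{O}_{\mathbb{P}^1}$ shows that $\pi^*$ identifies $H^0(\mathbb{P}^1,\mathcal{O}_{\mathbb{P}^1}(c))$ with $H^0(\mathbb{F}_n,\mathcal{O}_{\mathbb{F}_n}(cf))$. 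The morphism $\pi\circ\psi$ is given by a basepoint-free $2$-dimensional subspace $V\subseteq H^0(\mathbb{F}_n,\mathcal{O}_{\mathbb{F}_n}(cf))$, so $V=\pi^*W$ for a $2$-dimensional $W\subseteq H^0(\mathbb{P}^1,\mathcal{O}_{\mathbb{P}^1}(c))$; as $\pi$ is surjective, $W$ is basepoint-free and hence defines a morphism $\varphi\colon\mathbb{P}^1\to\mathbb{P}^1$ with $\pi\circ\psi=\varphi\circ\pi$ by functoriality of the map attached to a linear system. Finally $\varphi$ is surjective because $\pi\circ\psi$ and $\pi$ are, so $\psi$ is a based map. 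The only genuinely geometric step — and the place the hypothesis $n>0$ is indispensable — is the identification $\psi^*f=cf$; for $\mathbb{F}_0=\mathbb{P}^1\times\mathbb{P}^1$ both ruling classes are nef with self-intersection zero and an endomorphism may swap them, so the statement is false there. Everything after that is bookkeeping with linear systems, the only delicate point being that basepoint-freeness of $V$ descends to $W$, which is immediate from the surjectivity of $\pi$.
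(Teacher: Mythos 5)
Your proof is correct, and it takes a genuinely different route from the paper's in its second half. The paper first shows that $\psi^*$ acts on all of $N^1(\mathbb{F}_n)$ as multiplication by a single integer $d$ (using that $\mathcal{O}_{\mathbb{F}_n}(1)^2=n[p]\neq 0$ together with preservation of the nef and pseudoeffective cones, as in Lemma~\ref{lem:pullback-is-constant}), then deduces via the projection formula that $\psi_*F$ is a multiple of $F$, concludes that $\psi$ maps fibres to fibres, and finally builds $\varphi$ by composing with a section $s\colon\PP^1\to\mathbb{F}_n$ and checking commutativity pointwise. You instead isolate the minimal numerical input --- $\psi^*f$ is nef of self-intersection zero, and for $n>0$ the fibre class spans the unique such ray of $\operatorname{Nef}(\mathbb{F}_n)$, so $\psi^*f=cf$ --- and then factor $\pi\circ\psi$ through $\pi$ cohomologically: since $\pi_*\O_{\mathbb{F}_n}=\O_{\PP^1}$ and the projection formula give $H^0(\mathbb{F}_n,\pi^*\O_{\PP^1}(c))=\pi^*H^0(\PP^1,\O_{\PP^1}(c))$, the basepoint-free pencil defining $\pi\circ\psi$ descends to a basepoint-free pencil on $\PP^1$, which produces $\varphi$ directly and scheme-theoretically. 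Your route is more elementary and self-contained: it avoids the big/ample-cone preservation machinery and the existence of a section, and it sidesteps the mild circularity of invoking Lemma~\ref{lem:pullback-is-constant} (stated for based maps) before basedness is known. What the paper's heavier first step buys is the extra conclusion $\psi^*=d\cdot\operatorname{Id}$, i.e.\ that the degree of the base map equals the relative degree, which the subsequent corollary on Hirzebruch surfaces cites; your argument gives only $\psi^*f=cf$ and would need a short supplementary computation to recover that equality, but for the lemma as stated nothing is missing. Your observation that $n>0$ is essential (on $\mathbb{F}_0$ an endomorphism may swap the two rulings) matches the role the hypothesis plays in the paper's proof as well.
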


\begin{proof}
Consider $\mathbb{F}_n$ as a projective bundle over $\PP^1$ with projection map $\pi$, and define $\L=\O_{\mathbb{F}_n}(1)$ and $F=\pi^*\O_{\bP^1}(1)$. Recall that the nef cone of $\mathbb{F}_n$ is generated by $\L$ and $F$. As $\psi^*$ preserves the nef cone of $\mathbb{F}_n$, it preserves its boundary rays. Working additively in the Chow ring, we show that $\psi^*\L=d\L$ and $\psi^*F=dF$. 

Towards a contradiction suppose that $\psi^*\L=\alpha F$ for some scalar $\alpha$. Then $\psi^*\L\cdot \psi^*\L=0$, since $F^2=0$. Furthermore, recall that $\L^2 = n[p]$, where $[p]$ is the class of a point. Hence $\psi^*\L \cdot \psi^*\L = \psi^*(\L^2) = \psi^*(n[p]) = n\psi^*([p]) = 0$. However, this cannot be the case as the pullback of the class of a point is a multiple of the class of a point, in particular $\psi^*([p]) = d^\prime [p]$ where $d^\prime$ is the topological degree of $\psi$.

Therefore $\psi^*$ preserves the rays of the nef cone. By Lemma~\ref{lem:pullback-is-constant} we know that $\psi^*$ preserves the rays of the pseudo-effective cone, and that $\L$ is not a boundary ray of the pseudo-effective cone. It follows that $\psi^*$ has three noncollinear eigenvectors, so $\psi^*=d\cdot\operatorname{Id}$ for some positive integer $d$. In particular, $\psi^*\L=d\L$ and $\psi^*F=dF$. 

Now write $\psi_*F=\alpha \L+\beta F$. Then $\psi_*F\cdot F=F\cdot (\alpha\L+\beta F)=\alpha F\cdot\L=\alpha$ as the intersection of $\O_{\mathbb{F}_n}(1)$ with a fibre is 1. On the other hand $\psi_*F\cdot F=F\cdot\psi^*F=F\cdot dF=dF^2=0$. We conclude that $\alpha=0$. In other words $\psi_*F=(\deg \psi\vert_F)[\psi(F)]=\beta [F]$ in the Chow ring. This tells us that $\pi_* (\deg \psi\vert_F)[\psi(F)]=0$, since $\pi_*[F]=0$. Because the pushforward is zero precisely when the dimension of $\pi(\psi(F))$ is strictly smaller than $\dim \psi(F)=1$, we see that $\pi(\psi(F))$ is irreducible and $0$-dimensional, in other words, a point. It follows that $\psi(F)$ is a fibre of $\pi$, so we conclude that $\psi$ maps fibres to fibres. 

Finally let $s\colon \bP^1\ra \mathbb{F}_n$ be a section so that for any point $p \in \PP^1$ and $q \in \pi^{-1}(p)$ we have $(s \circ \pi)(q) \in \pi^{-1}(p)$ (such a section exists by \cite[Exercise~II.7.8]{hartshorne}). Define $\varphi\colon \bP^1\ra \bP^1$ by $\pi \circ \psi \circ s$. Since $\psi$ maps fibres to fibres, for any $p \in \PP^1$, we have that $(\pi \circ \psi)(\pi^{-1}(p)) = \pi( \pi^{-1}(p)) = p$. For any $q \in \mathbb{F}_1$, if we set $p \coloneqq (\pi \circ \psi)(q)$, then $q \in \pi^{-1}(p)$ and so $p^\prime \coloneqq (s \circ \pi)(q) \in \pi^{-1}(p)$. Consequently we get
\begin{align*}
    (\varphi \circ \pi)(q) &= (\pi \circ \psi \circ s \circ \pi)(q) \\
    &= (\pi \circ \psi)(p^\prime) \\
    &= p = (\pi \circ \psi)(q)
\end{align*}
In other words, we have that $\varphi \circ \pi = \pi \circ \psi$, which shows $\psi$ is a based map with base map $\varphi$.
\end{proof}

Theorem~\ref{thm:linebundlecase} then gives us a classification of all surjective endomorphisms of $\mathbb{F}_n$.

\begin{corollary}
Every surjective endomorphism of the $n$\textsuperscript{th} Hirzebruch surface $\mathbb{F}_n = \PP \bigl( \O_{\PP^1} \oplus \O_{\PP^1}(1) \bigr)$ is given by the data of a surjective endomorphism $\varphi \colon \PP^1 \rightarrow \PP^1$ of degree $d$ and pairs
\begin{align*}
    F_1([x:y],z_1,z_2) = \sum_{i=0}^ds_i([x:y])z_1^{d-i}z_2^i, && F_2([x:y],z_1,z_2)=cz_2^d,
\end{align*}
where $c, s_0$ are non-zero constants, and $s_i\in \O_{\PP^1}(ni)$ for all $i=1,\dots, d$. Concretely, a point $(p,[z_1:z_2]) \in \mathbb{F}_n$ is mapped under $\psi$ by the rule
\begin{equation*}
    (p,[z_1:z_2])\mapsto \left(\varphi(p), \left[\sum_{i=0}^ds_i(p)z_1^{d-i}z_2^i:cz_2^d\right]\right).
\end{equation*}
\end{corollary}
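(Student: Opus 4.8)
The plan is to combine Lemma~\ref{lem:HirzebruchIsBased} with the split-bundle classification in Theorem~\ref{thm:linebundlecase}. First, Lemma~\ref{lem:HirzebruchIsBased} tells us that any surjective $\psi\colon\mathbb{F}_n\to\mathbb{F}_n$ is a based map; write its base map as $\varphi\colon\PP^1\to\PP^1$ and put $d=\deg\varphi$. Since $\Pic(\PP^1)\cong\bZ$ we have $\varphi^*\O_{\PP^1}(1)\cong\O_{\PP^1}(d)$, hence $\varphi^*\O_{\PP^1}(m)\cong\O_{\PP^1}(m)^{\otimes d}$ for all $m$; in particular the hypothesis $\varphi^*\L\cong\L^{\otimes q}$ of Theorem~\ref{thm:linebundlecase} holds with the single value $q=d$ for both $\L_1=\O_{\PP^1}$ and $\L_2=\O_{\PP^1}(n)$. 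Moreover, the proof of Lemma~\ref{lem:HirzebruchIsBased} gives $\psi^*=d\cdot\operatorname{Id}$ on $N^1(\mathbb{F}_n)$, so $\psi^*\O_{\mathbb{F}_n}(1)$ is $d\,\O_{\mathbb{F}_n}(1)$ (additively) with trivial $\pi^*$-twist; thus the relative degree of $\psi$ is $d$ and the twisting bundle $\B$ is trivial, so Theorem~\ref{thm:linebundlecase} applies with $\E=\O_{\PP^1}\oplus\O_{\PP^1}(n)$, $r=2$, relative degree $d$, and $q=d$.

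Next I would unwind what Theorem~\ref{thm:linebundlecase} produces here. For a partition $\lambda=(\lambda_1,\lambda_2)$ of $d$ one has $\L^\lambda=\L_1^{\lambda_1}\otimes\L_2^{\lambda_2}=\O_{\PP^1}(n\lambda_2)$, so the required data is sections $a_{\ell,\lambda}\in\Gamma\bigl(\PP^1,\O_{\PP^1}(n\lambda_2)\otimes\L_\ell^{\otimes -d}\bigr)$. For $\ell=1$ this is $\Gamma\bigl(\PP^1,\O_{\PP^1}(n\lambda_2)\bigr)$, nonzero for every $\lambda_2\geq 0$; relabelling $s_i:=a_{1,(d-i,i)}\in\Gamma\bigl(\PP^1,\O_{\PP^1}(ni)\bigr)$ (so $s_0$ is a constant, since $ni=0$ when $i=0$) gives $F_1=\sum_{i=0}^d s_i z_1^{d-i}z_2^i$. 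For $\ell=2$ this is $\Gamma\bigl(\PP^1,\O_{\PP^1}(n(\lambda_2-d))\bigr)$, which vanishes whenever $\lambda_2<d$ and is one-dimensional when $\lambda_2=d$; writing $c:=a_{2,(0,d)}$ yields $F_2=c\,z_2^d$. So the sections of Theorem~\ref{thm:linebundlecase} are exactly the tuple $(s_0,\dots,s_d,c)$ of the statement, with $s_1,\dots,s_d$ a priori unconstrained.

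Finally I would impose the non-vanishing criterion of Theorem~\ref{thm:linebundlecase}: for every $p\in\PP^1$, the forms $F_1(p,\mathbf{z})$ and $F_2(p,\mathbf{z})$ must have no common zero in $\PP^1_{[z_1:z_2]}$. If $c=0$ then $F_2\equiv 0$, and any zero of $F_1(p,\cdot)$ — which exists since it is a form of degree $d\geq 1$ in two variables — is a common zero, so $c\neq 0$. Given $c\neq 0$, the vanishing of $F_2(p,\mathbf{z})$ forces $z_2=0$, i.e.\ the point $[1:0]$, where $F_1(p,\mathbf{z})$ evaluates to $s_0$; hence the criterion is equivalent to $s_0\neq 0$, and conversely when $c,s_0\neq 0$ one checks directly that $F_1=s_0z_1^d$ is nonzero on $\{z_2=0\}$, so no common zero occurs. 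This pins down the classification: $\varphi$ an arbitrary surjective endomorphism of $\PP^1$, $c$ and $s_0$ nonzero constants, and $s_1,\dots,s_d$ arbitrary sections of $\O_{\PP^1}(n),\dots,\O_{\PP^1}(nd)$; and by the explicit description of the fibrewise action recorded in the discussion preceding Proposition~\ref{prop:transition function method} (equivalently the final sentence of Theorem~\ref{thm:mainLineBundle}), $\psi$ acts by $(p,[z_1:z_2])\mapsto\bigl(\varphi(p),\,[\,\sum_{i=0}^d s_i(p)z_1^{d-i}z_2^i:c\,z_2^d\,]\bigr)$. I do not expect a genuine obstacle: the only care needed is bookkeeping — matching the partition/monomial conventions of Theorem~\ref{thm:linebundlecase} to the $z_1^{d-i}z_2^i$ indexing and confirming the line-bundle degrees — since the substantive input is already contained in Lemma~\ref{lem:HirzebruchIsBased} and Theorem~\ref{thm:linebundlecase}.
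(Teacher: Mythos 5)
Your proposal is correct and follows essentially the same route as the paper: Lemma~\ref{lem:HirzebruchIsBased} to reduce to a based map of relative degree $d$, then Theorem~\ref{thm:linebundlecase} to read off the sections $a_{1,(d-i,i)}=s_i$ and $a_{2,(0,d)}=c$, with the same non-vanishing analysis forcing $c,s_0\neq 0$. If anything you are slightly more explicit than the paper in justifying that the twist $\B$ is trivial (via $\psi^*=d\cdot\operatorname{Id}$ from the proof of Lemma~\ref{lem:HirzebruchIsBased}), a point the paper's proof leaves implicit.
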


\begin{proof}
In the proof of Lemma~\ref{lem:HirzebruchIsBased}, we see that $\varphi$ has degree $d$. Hence, by Theorem~\ref{thm:linebundlecase}, describing a based map $\psi$ with base map $\phi$ and relative degree $d$ is equivalent to, for each partition $\lambda_1 + \lambda_2 = d$, choosing global sections
\begin{align*}
    a_{1,\lambda} \in \Gamma \bigl( \PP^1, \O_{\PP^1}(n\lambda_2) \bigr) && a_{2,\lambda} \in \Gamma \bigl( \PP^1, \O_{\PP^1}(n\lambda_2) \otimes \O_{\PP^2}(-nd) \bigr).
\end{align*}
In the latter case, the only time there are non-zero global sections is when $\lambda_2 = d$, and so we set $c = a_{2,(0,d)}$ which is some constant. In the former case, we have no restrictions on the sections, so we set $s_i \coloneqq a_{1,(d-i,i)}$. When $i = 0$, we see that $s_0$ is some constant. This establishes the form of $F_1$ and $F_2$.

Let us show that both $c$ and $s_0$ must be non-zero. If $s_0 = 0$, then both $F_1$ and $F_2$ would be divisible by $z_2$, hence vanishing at a common point. On the other hand if $c = 0$, then $F_2 = 0$ and so it certainly shares a common zero with $F_1$.
\end{proof}

\section{The tangent and cotangent bundles of a toric variety}
\label{subsec:tangent-and-cotangent}
Throughout this section $X$ is an $n$-dimensional smooth projective toric variety with fan $\Sigma$. Our goal is to prove Theorem~\ref{thm:mainTC}, stating that the projectivized tangent and cotangent bundles $\PP(T_X)\to X$ and $\PP(\Omega_X)\to X$ do not admit any non-automorphic base-toric surjective endomorphisms. We further suppose that $X$ is not isomorphic to $(\PP^1)^n$ as we carried out a classification of the base-toric maps of the projectivized (co)tangent bundle in Example~\ref{ex:PP1-classification}; in particular we may assume $\dim X>1$.

Our approach hinges on the decomposition of the tangent bundle along the toric invariant curves of $X$. We say two maximal cones $\sigma,\sigma'\in\Sigma$ are \emph{adjacent} if their intersection is an $(n-1)$-dimensional cone $\tau$. The orbit closure corresponding to such $\tau$ is an invariant curve $C_\tau\cong\PP^1$, and the restriction of the tangent bundle to $C_\tau$ splits as
\[
    T_X|_{C_\tau} = \mathcal{O}(a_1)\oplus\cdots\oplus\mathcal{O}(a_{n-1})\oplus\mathcal{O}(2),
\]
where the integers $a_1,\dots,a_{n-1}$ arise from the wall relation between $\sigma$ and $\sigma'$; see Subsection~\ref{subsec: transition functions and coordinates}.

Our proof is subdivided into three cases determined by the splitting type of the tangent bundle:

\begin{enumerate}
\item[(\namedlabel{i:overall-case1}{1})] When all $a_i > 0$ for all invariant curves, $T_X$ is ample and $X \cong \PP^n$ by Mori's Theorem. Both results then follow from a theorem of Amerik and Kuznetsova.

\item[(\namedlabel{i:overall-case2}{2})] Similarly, when all $a_i \geq 0$ for all invariant curves, $T_X$ is nef and $X \cong \prod \PP^{n_i}$, and the result follows once again by the work of Amerik and Kuznetsova.

\item[(\namedlabel{i:overall-case3}{3})] When some $a_i < 0$ for at least one invariant curve, a more intricate analysis is required.
\end{enumerate}

The cases \eqref{i:overall-case1} and \eqref{i:overall-case2} are treated simultaneously in Subsection~\ref{subsec: Cases 1 and 2}. After this, the remainder of the section is devoted to the study of case \eqref{i:overall-case3}. This case is more challenging, and our proofs rely on the specific transition functions of each one of these bundles. Concretely, we demonstrate that when a local surjective endomorphism is defined over an affine toric subvariety $U_\sigma\subseteq X$, it cannot be extended to some adjacent affine toric subvariety. While we conjecture our techniques extend beyond the cases presented, we conclude with an example of a smooth projective toric variety where surjective endomorphisms can be glued along two adjacent subvarieties. We hypothesize that such local constructions cannot be glued into a global surjective endomorphism, though establishing this rigorously would require new techniques.

\subsection{Cases \ref{i:overall-case1} and \ref{i:overall-case2}: nef tangent bundle}\label{subsec: Cases 1 and 2}

According to \cite[Theorem~2.1]{hering2010positivity}, a toric vector bundle on a toric variety is ample (resp. nef) if and only if  its restriction to every invariant curve $C_\tau\subseteq X$ is ample (resp. nef). When $T_X$ is ample, Mori proved that $X\cong\PP^n$ \cite{mori1979projective} (see also \cite{Wu2024} for a proof in the toric case). Similarly, using a result of Fujino-Sato \cite{FujinoSato2009}, we prove in Proposition~\ref{sec: tangent nef implies product of projective spaces} that when $T_X$ is nef, $X$ is isomorphic to a product of projective spaces. In both cases the question of existence of surjective endomorphisms has been fully answered by Amerik and Kuznetsova:

\begin{lemma}[{\cite[Theorem~1]{Amerik2}}]
    Let $X$ be a simply-connected projective variety such that for any line bundle $L$ its first cohomology $H^1(X,L) = 0$. Let $\E$ be a vector bundle of rank $r>1$ on $X$. If there exists a surjective endomorphism of $\PP(E)\to X$ of degree greater than one on the fibres, then $E$ splits into a direct sum of line bundles.
\end{lemma}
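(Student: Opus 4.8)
The plan is to exploit the fibrewise structure of $\psi$ together with the cohomological rigidity of $X$, following the circle of ideas of Amerik \cite{Amerik1} and Amerik--Kuznetsova \cite{Amerik2}. Since the relative degree is multiplicative along iterates and the conclusion concerns only $\E$, I would first replace $\psi$ by a suitable power so that, by Proposition~\ref{prop:reductiontodagger}, it becomes based with a base morphism $\varphi\colon X\to X$ and $\psi^*\O_{\PP(\E)}(1)\cong\O_{\PP(\E)}(d)\otimes\pi^*\B$ for some line bundle $\B$ and relative degree $d>1$. By the pushforward--pullback adjunction recorded in Section~\ref{sec:transition-function-method}, this is the same datum as a morphism of bundles $\theta\colon\varphi^*\E\to\Sym^d\E\otimes\B$, and I would note at the outset that $\theta$ is injective with locally free cokernel: on a fibre $\pi^{-1}(x)$, a linear relation among the $r$ homogeneous degree-$d$ forms cutting out $\psi|_{\pi^{-1}(x)}$ would produce a non-constant morphism $\PP^{r-1}\to\PP^{r-2}$, which does not exist.

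The heart of the matter --- and the step I expect to be the main obstacle --- is to deduce from $\theta$ that $\E$ decomposes. A purely numerical attempt does not suffice: comparing the two expressions for $\psi^*P_\E(\L)$ as in \eqref{eqn:chernclass-conditions} constrains the Chern classes (Theorem~\ref{thm:chernrelations}), but in the borderline case in which $\varphi$ acts on $\Pic(X)$ by multiplication by $d$ --- which already occurs for $\E=T_{\PP^n}$ --- Corollary~\ref{cor:trans factor} forces $\B\cong\O_X$ and identity \eqref{eqn:chernclass-conditions} becomes automatic, so genuinely finer geometric input is needed. The approach I would take is to show that $\psi$ respects a complete flag of subbundles of $\E$ with line-bundle quotients. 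Restricting $\varphi$ and $\psi$ to rational curves $C\cong\PP^1$ in $X$ (after a further iterate one may assume $\varphi(C)=C$), one obtains $\E|_C\cong\bigoplus_i\O_C(a_i)$ and an induced relative-degree-$d$ endomorphism of $\PP(\E|_C)$; a local analysis of such endomorphisms --- in the spirit of the Hirzebruch classification in Section~\ref{sec:split-case}, where every surjective endomorphism preserves the section of extremal self-intersection and is totally ramified there --- should show that the endomorphism is forced to preserve the Harder--Narasimhan-type slope filtration of $\E|_C$. The remaining and hardest task is to assemble these curve-by-curve filtrations into a single $\psi$-stable flag $0=\E_0\subset\E_1\subset\cdots\subset\E_r=\E$ on all of $X$; this is where simple connectedness of $X$ (equivalently of $\PP(\E)$) is used, to rule out having to pass to an \'etale cover, and it is essentially the content of \cite[Theorem~1]{Amerik2} together with \cite{Amerik1}.

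Granting such a flag, the conclusion is then formal: one splits it by induction on $i$, writing $\E_{i-1}\cong\bigoplus_{j<i}L_j$ and observing that the extension $0\to\E_{i-1}\to\E_i\to L_i\to 0$ has class in $\operatorname{Ext}^1(L_i,\E_{i-1})\cong\bigoplus_{j<i}H^1(X,L_j\otimes L_i^{-1})$, which vanishes by hypothesis; hence $\E_i\cong\E_{i-1}\oplus L_i$, and taking $i=r$ gives $\E\cong\bigoplus_{j=1}^{r}L_j$. In short, the reduction in the first paragraph and the splitting in the last are formal, and all the difficulty is concentrated in producing the global $\psi$-stable flag.
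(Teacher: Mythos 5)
There is a genuine gap, and it is the central one: your argument is circular. The statement you are asked to prove \emph{is} \cite[Theorem~1]{Amerik2}; the paper does not prove it but imports it verbatim as a citation, so your proposal has to stand on its own. Yet at the decisive step --- upgrading the curve-by-curve slope filtrations of $\E|_C$ to a single $\psi$-stable complete flag $0=\E_0\subset\cdots\subset\E_r=\E$ on all of $X$ --- you write that this ``is essentially the content of \cite[Theorem~1]{Amerik2} together with \cite{Amerik1}.'' That is precisely the conclusion (or its main engine) being assumed. The parts you do carry out are the routine ones: replacing $\psi$ by an iterate to get a based map (Proposition~\ref{prop:reductiontodagger}), translating it into $\theta\colon\varphi^*\E\to\Sym^d\E\otimes\B$ via the adjunction of Section~\ref{sec:transition-function-method}, and splitting an already-given flag by the hypothesis $H^1(X,L)=0$. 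None of these touches the heart of the matter, namely why a fibrewise endomorphism of degree $d>1$ forces a global filtration (or any other splitting mechanism) when $\E$ is a priori indecomposable.

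Even taken as a sketch of that missing step, the plan is not convincing as stated. It is not clear that the endomorphism of $\PP(\E|_C)$ must preserve the Harder--Narasimhan filtration of $\E|_C$ when $\E|_C$ has repeated or intermediate degrees (the Hirzebruch computation in Section~\ref{sec:split-case} covers rank $2$ with distinct twists, which is the easiest case), and, more seriously, filtrations defined curve by curve have no evident reason to glue: the relevant subsheaves can jump as $C$ moves, and ``simple connectedness rules out passing to an \'etale cover'' is an allusion, not an argument --- monodromy of a flag is not the only obstruction to globalizing it. So the proposal should be regarded as a plausible reading of what Amerik--Kuznetsova's proof might involve, plus the formal bookends, rather than a proof; to make it one you would have to actually construct the $\psi$-stable flag (or reproduce whatever mechanism \cite{Amerik2} uses) without invoking their theorem.
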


This result applies directly to our setting as both $\PP^n$ and products of projective spaces satisfy the cohomological vanishing condition. Indeed, the vanishing of $H^1(\PP^n, L)$ for any line bundle $L$ when $n \geq 2$ is well-known. For products of projective spaces the vanishing follows from K\"unneth's formula (see, for example, \cite[Theorem~14]{kempf1980some}):
\[
    H^1 \bigl( \PP^{d_1}\times\cdots\times\PP^{d_k},\mathcal{O}(b_1,\dots,b_k) \bigr) = \bigotimes_{1\leq i\leq k} H^1 \bigl( \PP^{d_i},\mathcal{O}(b_i) \bigr),
\]
which is equal to $0$ provided that at least one $d_i>1$. Since the tangent and cotangent bundles of projective spaces of dimension at least two do not split, it follows that their projectivizations admit no surjective endomorphisms.

Therefore, this gives a proof of Theorem~\ref{thm:mainTC} in the case of $X$ being a product of projective spaces, as $T_X$ being nef implies $X$ is isomorphic to a product of projective spaces.

\begin{proposition}\label{sec: tangent nef implies product of projective spaces}
    Let $X$ be a smooth projective toric variety such that $T_X$ is nef. Then, $X\cong\PP^{d_1}\times\cdots\times\PP^{d_k}$ for some positive integers $d_1,\dots,d_k$.
\end{proposition}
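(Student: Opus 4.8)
The plan is to first reduce nefness of $T_X$ to a finite combinatorial condition on the fan $\Sigma$, and then invoke the toric structure theory of Fujino and Sato. For the reduction I would use \cite[Theorem~2.1]{hering2010positivity}, according to which a toric vector bundle on $X$ is nef precisely when its restriction to every torus-invariant curve is nef. Hence $T_X$ is nef if and only if $T_X|_{C_\tau}$ is nef for every wall $\tau\in\Sigma(n-1)$.

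Next I would feed in the explicit splitting type recalled in Subsection~\ref{subsec: transition functions and coordinates}: if $\sigma,\sigma'$ are the maximal cones adjacent along $\tau$, then
\[
    T_X|_{C_\tau}\cong \O(a_1)\oplus\cdots\oplus\O(a_{n-1})\oplus\O(2),
\]
where $a_1,\dots,a_{n-1}$ are the coefficients of the wall relation determined by $\sigma$ and $\sigma'$. Since a vector bundle on $C_\tau\cong\PP^1$ is nef exactly when each of its line-bundle summands has nonnegative degree, the previous step shows that $T_X$ is nef if and only if every wall relation of $\Sigma$ has all of its coefficients nonnegative. This is a purely combinatorial, checkable condition on $\Sigma$.

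The last step is to recognize this nonnegativity condition as the one under which the analysis of Fujino and Sato \cite{FujinoSato2009} forces the fan to split: $\Sigma$ decomposes as a product $\Sigma=\Sigma_1\times\cdots\times\Sigma_k$ in which each $\Sigma_i$ is the fan of a projective space, so that $X\cong\PP^{d_1}\times\cdots\times\PP^{d_k}$. Since $X$ is smooth and projective, hence complete, their hypotheses apply and the argument concludes.

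I expect the main obstacle to be the bookkeeping linking the splitting integers $a_i$ to the wall-relation coefficients, together with the precise statement of \cite{FujinoSato2009}: one must confirm that nefness of $T_X|_{C_\tau}$ is genuinely equivalent to nonnegativity of all the $a_i$ (with the correct sign convention on wall relations), and that the combinatorial positivity thereby extracted is exactly the input from which the Fujino--Sato structure result yields a product decomposition of the fan. The remaining verifications (completeness of $X$, splitting of bundles on $\PP^1$, consistency of determinants) are routine.
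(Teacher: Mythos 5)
Your argument is correct, but it reaches the Fujino--Sato input by a different road than the paper. The paper's proof is two lines: by \cite[Corollary~8.4.4]{Lazarsfeld-PosII}, any smooth projective variety with nef tangent bundle has every effective divisor nef, i.e.\ $\operatorname{Nef}(X)=\overline{\operatorname{Eff}}(X)$, and then \cite[Proposition~5.3]{FujinoSato2009} gives the product decomposition. You instead stay inside toric geometry: the Hering--Musta\c{t}\u{a}--Payne criterion \cite[Theorem~2.1]{hering2010positivity} plus the splitting $T_X|_{C_\tau}\cong\O(a_1)\oplus\cdots\oplus\O(a_{n-1})\oplus\O(2)$ show that nefness of $T_X$ is exactly nonnegativity of all wall-relation coefficients. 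The one point you flag but leave open---matching this combinatorial condition to the hypothesis of Fujino--Sato---deserves to be made explicit, since the result the paper cites is stated as the cone equality $\operatorname{Nef}(X)=\overline{\operatorname{Eff}}(X)$ rather than in terms of wall relations; the translation is, however, a one-line toric computation: for the wall $\tau$ between $\sigma$ and $\sigma'$ one has $D_{\rho_i}\cdot C_\tau=a_i$ for the rays $\rho_i$ of $\tau$ (and the two remaining intersection numbers equal $1$), so all $a_i\geq 0$ for all walls if and only if every torus-invariant divisor is nef, and since the invariant divisors generate the effective cone this is exactly $\operatorname{Nef}(X)=\overline{\operatorname{Eff}}(X)$. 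With that bridge inserted your proof is complete. The trade-off: the paper's route is shorter and uses a general positivity fact valid beyond the toric setting, while yours is more self-contained within the toric framework (and reuses facts already deployed in Section~\ref{subsec:tangent-and-cotangent} of the paper), at the cost of the extra intersection-theoretic translation.
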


\begin{proof}
    By \cite[Corollary~8.4.4]{Lazarsfeld-PosII}, every effective divisor on a smooth projective variety with nef tangent bundle is nef, that is, $\operatorname{Nef}(X) = \overline{\operatorname{Eff}}(X)$. For $X$ as in the statement, this equality implies that $X$ is a product of projective spaces by \cite[Proposition~5.3]{FujinoSato2009}, as desired. 
\end{proof}

\subsection{The transition function method}

To attack the situation in which $X$ is not a product of projective spaces, we describe surjective endomorphisms explicitly using transition functions. Firstly, however, we make a simplifying reduction that the map on the base $X$ is just a toric Frobenius map.

\begin{proposition}\label{prop:firbres cor}
    Let $X_\Sigma$ be a smooth projective toric variety defined by a fan $\Sigma$. Take $\E$ be isomorphic to either the tangent bundle or cotangent bundle of $X$. Suppose we have a diagram
    \[
        \xymatrix{\bP(\E)\ar[r]^\psi\ar[d]_\pi & \bP(\E)\ar[d]^\pi\\ X\ar[r]_\varphi & X}
    \]
    with $\psi$ and $\varphi$ surjective. Then for some integer $m \geq 1$, the iterate $\varphi^m$ is the $d$-th toric Frobenius map. Moreover, $(\psi^m)^*L\cong L^{\otimes d}$ for all $L\in \Pic(\bP(\E))$. 
\end{proposition}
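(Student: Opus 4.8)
The plan is to deduce both assertions from the structure result for toric endomorphisms (Lemma~\ref{lemma:toric reduction to mult by d}) together with the positivity input of Lemma~\ref{lem:pullback-is-constant}. Since we are in the base-toric situation, $\varphi$ is a toric morphism, hence so are all of its iterates. By Lemma~\ref{lemma:toric reduction to mult by d} there is $m\ge 1$ for which $\overline{\varphi^m}$ splits into eigenblocks: either $\overline{\varphi^m}=d\cdot\mathrm{Id}_N$, in which case $\varphi^m=[d]$ and the first claim is proved, or $X=X_1\times\cdots\times X_t$ with $t\ge 2$, $\overline{\varphi^m}=d_1\cdot\mathrm{Id}_{N_1}\times\cdots\times d_t\cdot\mathrm{Id}_{N_t}$, and the $d_i$ are not all equal, so that $\varphi^m=[d_1]\times\cdots\times[d_t]$. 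The heart of the argument is to rule out this second possibility.

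\textbf{Ruling out split eigenvalues.} Suppose we are in the product case; write $r=\rank\mathcal E=\dim X$ and recall $\mathcal E=\bigoplus_i p_i^*\mathcal E_i$, where $\mathcal E_i$ is the tangent, resp.\ cotangent, bundle of $X_i$. For each $i$ pick a torus-invariant curve $C_i\cong\PP^1$ transverse to the remaining factors, i.e.\ $C_i=C'\times\{\mathrm{pt}\}$ with $C'\subseteq X_i$ an invariant curve and $\mathrm{pt}$ a torus-fixed point of $\prod_{j\ne i}X_j$; then $\varphi^m$ fixes $C_i$ and $\varphi^m|_{C_i}$ has degree $d_i$. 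Restricting $\psi^m$ over $C_i$ yields an endomorphism of $\PP(\mathcal E|_{C_i})$ based over $\varphi^m|_{C_i}$, surjective by a dimension count since $\psi^m$ is finite. Now $\mathcal E|_{C_i}\cong\bigl(\mathcal E_i|_{C'}\bigr)\oplus\mathcal O_{C_i}^{\oplus(r-\dim X_i)}$, and $\mathcal E_i|_{C'}$ contains the tangent-direction summand $\mathcal O_{C'}(\pm 2)$; since $t\ge 2$ the trivial part has positive rank, so $\mathcal E|_{C_i}$ is \emph{not} projectively trivial over $C_i$. After twisting $\mathcal E|_{C_i}$ by a line bundle so that its splitting type has least entry $0$ and a strictly positive entry (this changes neither $\PP(\mathcal E|_{C_i})$ nor the relative degree), Lemma~\ref{lem:pullback-is-constant} applies and shows $(\psi^m)^*$ is multiplication by $\deg(\varphi^m|_{C_i})=d_i$ on $N^1(\PP(\mathcal E|_{C_i}))$; in particular the relative degree of $\psi^m$ equals $d_i$. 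But the relative degree is intrinsic to the fibration $\pi$ and independent of $i$, so all the $d_i$ coincide --- a contradiction. Hence $\varphi^m=[d]$ for some $d\ge 1$.

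\textbf{The pullback is multiplication by $d$.} Replace $(\psi,\varphi)$ by $(\psi^m,\varphi^m)$, so $\varphi=[d]$. Because $X\not\cong(\PP^1)^n$, neither the tangent nor the cotangent bundle of $X$ is a direct sum of line bundles, so $\mathcal E$ is not projectively trivial; by the proposition preceding Corollary~\ref{cor:maincor}, $\mathcal E$ fails to be projectively trivial over some torus-invariant curve $C\cong\PP^1$, which is fixed by $[d]$ with $[d]|_C$ of degree $d$. Exactly as above, Lemma~\ref{lem:pullback-is-constant} gives that the relative degree of $\psi$ equals $d$. Writing $\psi^*\mathcal O_{\PP(\mathcal E)}(1)=\mathcal O_{\PP(\mathcal E)}(d)\otimes\pi^*\mathcal B$, Corollary~\ref{cor:trans factor} yields $\mathcal B=\tfrac{1}{r}\bigl(\varphi^*c_1(\mathcal E)-d\,c_1(\mathcal E)\bigr)$; but $[d]^*$ multiplies every invariant prime divisor by $d$ (as $[x:y]\mapsto[x^d:y^d]$ on $\PP^1$), hence acts as multiplication by $d$ on $\Pic(X)$, so $\varphi^*c_1(\mathcal E)=d\,c_1(\mathcal E)$ and $\mathcal B=\mathcal O_X$. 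Thus $\psi^*\mathcal O_{\PP(\mathcal E)}(1)\cong\mathcal O_{\PP(\mathcal E)}(1)^{\otimes d}$, and for $L'\in\Pic(X)$ we have $\psi^*\pi^*L'=\pi^*\varphi^*L'=\pi^*\bigl((L')^{\otimes d}\bigr)$. Since $\mathcal O_{\PP(\mathcal E)}(1)$ together with $\pi^*\Pic(X)$ generate $\Pic(\PP(\mathcal E))$, it follows that $(\psi^m)^*L\cong L^{\otimes d}$ for all $L\in\Pic(\PP(\mathcal E))$.

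\textbf{Main obstacle.} The only genuine difficulty is the exclusion of the split-eigenvalue case of Lemma~\ref{lemma:toric reduction to mult by d}: one must force the relative degree of $\psi$ to equal the degree of $\varphi$ \emph{simultaneously} in every product direction, and this is precisely where the failure of projective triviality of the (co)tangent bundle over a suitable invariant curve --- guaranteed here by the trivial summands contributed by the remaining factors --- combines with the rigidity of $\psi^*$ on $N^1$ (Lemma~\ref{lem:pullback-is-constant}, which itself rests on Lemma~\ref{lem:positivity-of-bundles}). Once $\varphi^m=[d]$ is known, the rest is a routine Chern-class bookkeeping with $\mathcal O_{\PP(\mathcal E)}(1)$ and $\pi^*\Pic(X)$.
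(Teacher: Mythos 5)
Your proof is correct and follows essentially the same route as the paper: reduce $\varphi^m$ to $[d_1]\times\cdots\times[d_t]$ via Lemma~\ref{lemma:toric reduction to mult by d}, restrict $\psi^m$ over torus-invariant curves where the (co)tangent bundle picks up trivial summands from the complementary factors and is therefore not projectively trivial, use Lemma~\ref{lem:pullback-is-constant} (as in Theorem~\ref{thm:basedmaps-plus-projtrivial}) to identify each $d_i$ with the curve-independent relative degree, and finish with Corollary~\ref{cor:trans factor} and torsion-freeness of $\Pic$ of a smooth projective toric variety to get $\B=\O_X$ and hence the action on $\Pic(\PP(\E))$. The only wrinkle is that in the final step you invoke $X\not\cong(\PP^1)^n$ --- a standing assumption of the section but not a hypothesis of this proposition --- to produce a curve over which $\E$ is not projectively trivial; the paper avoids this by reusing the curves $C_i$ from the first step (whose restriction of $\E$ contains an $\O_{\PP^1}(\pm 2)$ summand alongside trivial ones), which also covers $X=(\PP^1)^n$ with $n\geq 2$, so you should extract the relative degree $d$ from those curves rather than from the non-splitness of $\E$.
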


\begin{proof}
    Suppose that $X_\Sigma$ equivariantly splits as
    \[
        X_\Sigma\cong X_{\Sigma_1}\times\ldots X_{\Sigma_t}
    \]
    and that each $X_{\Sigma_i}$ does not split equivariantly. By Lemma~\ref{lemma:toric reduction to mult by d}, we have that there is some integer $m\geq 1$ such that $\varphi^m=[d_1]\times\ldots \times [d_t]$, where $[d_i]$ represents the toric morphism induced by multiplication by $d_i$ on $\Sigma_i$. Therefore, we may assume that $\varphi=[d_1]\times\ldots\times [d_t]$ in what follows. Let $x_i$ be the torus identity in $X_i$ and let $\iota_i\colon X_{\Sigma_i}\ra X_\Sigma$ be the inclusion $X_{\Sigma_i}\hookrightarrow X_\Sigma$ defined by $x\mapsto (x_1,\ldots , x\ldots, x_t)$, where $x$ appears in the $i$th component. Let $p_i\colon X_\Sigma\ra X_{\Sigma_i}$ denote the canonical projection, and define $\V_i=\iota_i^*\E$. 
    
    If $\E_i$ is the tangent or cotangent bundle of $X_{\Sigma_i}$, then $\E=p_1^*\E_1\oplus\ldots \oplus p_t^*\E_t$ and $\V_i=\iota_i^*p_1^*\E_1\oplus \ldots\oplus\iota_i^*p_t^*\E_i$. Note that $\iota_i\circ p_j$ is the identity map when $i=j$, and if $i\neq j$ it is the constant map $X_{\Sigma_i}\ra X_{\Sigma_j}$ with image the torus identity $x_j\in X_j$. It follows that $\V_i=\E_i\oplus \O_{X_{\Sigma_i}}^{\oplus \ell_i}$ where $\ell_i=\dim X_{\Sigma}-\dim X_{\Sigma_i}$. 
    
    Choose a torus-invariant curve $C_i\subseteq X_i$ for each $1\leq i\leq t$. If $X_i\neq \bP^1$ we may assume that $\E_i\vert_{C_i}$ is not trivial, as otherwise $\E_i$ would be trivial by \cite[Theorem~6.4]{hering2010positivity}. If $X_i=\bP^1$, then $\E_i=\O_{\bP^1}(\pm 2)$. We conclude that $\V_i\vert_{C_i}$ is not projectively trivial. Consider the restriction of $\psi$ to $\bP(\E\vert_{C_i})=\bP(\V_i\vert_{C_i})$. Then, by Proposition~\ref{prop:fibreProp2}, the degree of $\psi$ on the fibres is the first dynamical degree $\lambda_1(\varphi\vert_{C_i})$, call this number $d$. This implies that the degree of $\psi$ on the fibres equals $\lambda_1(\varphi\vert_{C_i})$ for all $i=1,\ldots,t$; in particular, all these dynamical degrees are equal to $d$. Finally, as $\varphi\vert_{C_i}$ is simply the morphism $[d_i]$ on the toric variety $C_i\cong \bP^1$, we have that $\lambda_1(\varphi\vert_{C_i})=d_i=d$. We conclude that $\varphi=[d]$ on $X_{\Sigma}$. 
    
    To finish the proof we show the last statement. Recall that $\Pic(\bP(\E))=\bZ\oplus \pi^*\Pic(X_\Sigma)$, where $\bZ$ is generated by $\O_{\bP(\E)}(1)$. Since $\O_{\bP(\E)}(1)$ restricted to any fibre $F$ of $\pi$ is $\O_{F}(1)$, we have that $\psi^*\O_{\bP(\E)}(1)=\O_{\bP(\E)}(d)\otimes\pi^*B$ for some line bundle $B$ on $X_{\Sigma}$. After identifying $c_1(\E)\in A^1(X_\Sigma)$ with $\det \E$ in $\Pic(X_\Sigma)$, Corollary~\ref{cor:trans factor} gives
    \[
        B^{\otimes r}\cong \varphi^*\det\E\otimes\det\E^{-\otimes d} = \O_{X_{\Sigma}},
    \]
    where we use that $\varphi^*M\cong M^{\otimes d}$ for any $M\in \Pic(X_\Sigma)$. Since a smooth projective toric variety has no torsion in its Picard group, we conclude that $B=\O_{X_{\Sigma}}$. The result follows.     
\end{proof}

Now we concern ourselves with an explicit method for describing surjective endomorphisms of projective bundles using transition matrices. Suppose $X$ is a normal projective toric variety and $\E$ is either the tangent or cotangent bundle with a surjective endomorphism $\psi \colon \E \rightarrow \E$. By \cite[Lemma~6.9]{lesieutresatriano2021ksc}, we may always assume that after some iterate of $\psi$, there is a surjective morphism $\varphi \colon X \rightarrow X$ such that we have the commutative diagram
\begin{equation}\label{diagram: vector bundle map general}
\begin{tikzcd}
\PP(\E) \arrow[r, "\psi"] \arrow[d, "\pi" left] & \PP(\E) \arrow[d, "\pi" right] \\
X \arrow[r, "\varphi"] & X.
\end{tikzcd}
\tag{$\dagger$}
\end{equation}
Since our goal is to show that $\PP(\E)$ does not admit any non-identity endomorphisms, this simplifying assumption is without loss of generality.

\newpage
\subsection{The transition functions of tangent and cotangent bundles of a toric variety}\label{subsec: transition functions and coordinates}

In this subsection we present the transition functions for the tangent bundle and cotangent bundle of a toric variety $X$. This will allow us to recast the compatibility condition from Proposition~\ref{prop:transition function method} for these bundles in the following subsections. A detailed treatment of most results in this section can be found in \cite{Wu2024}, other relevant references include \cite{rocco:2014,hering2010positivity}.

Let $X = X_\Sigma$ be an $n$-dimensional smooth projective toric variety. Let $\sigma,\sigma'\in\Sigma$ be two maximal cones intersecting in an $(n-1)$-dimensional face $\tau$. Consider the primitive vectors $\mathbf{v}_1,\dots,\mathbf{v}_n,\mathbf{v}_n'\in N$ such that $\sigma=\langle \mathbf{v}_1,\dots,\mathbf{v}_n\rangle$ and $\sigma'=\langle \mathbf{v}_1,\dots,\mathbf{v}_{n-1},\mathbf{v}_n'\rangle$, so that $\tau=\langle \mathbf{v}_1,\dots,\mathbf{v}_{n-1}\rangle$. Then, the \emph{wall relation} determined by $\sigma$ and $\sigma'$ is the equation
\[
    a_1\mathbf{v}_1+\cdots a_{n-1}\mathbf{v}_{n-1}+\mathbf{v}_n+\mathbf{v}_n' =0,\quad a_i\in\mathbb{Z}.
\]
The wall relation is unique. 

Let $\mathbf{u}_1,\dots,\mathbf{u}_n$ be the primitive generators of $\sigma^\vee$, and $\mathbf{u}_1',\dots,\mathbf{u}_n'$ be the primitive generators of $(\sigma')^\vee$. Then, dual to the wall relation we have the following equalities:
\begin{align}\label{sec: tangent relation between ui}
    \begin{aligned}
    \mathbf{u}_i &= \mathbf{u}_i' + a_i\mathbf{u}_n,\text{ for all }i=1,\dots,n-1;\\
    \mathbf{u}_n &= -\mathbf{u}_n'.
    \end{aligned}
\end{align}  
%
%
Let $U_\sigma$ and $U_{\sigma'}$ be the affine toric varieties corresponding to $\sigma$ and $\sigma'$. The coordinates of $U_\sigma$ are $\chi^{\mathbf{u}_1}, \ldots, \chi^{\mathbf{u}_n}$, while the coordinates of $U_{\sigma'}$ are $\chi^{\mathbf{u}'_1}, \ldots, \chi^{\mathbf{u}'_n}$. According to Equation~\ref{sec: tangent relation between ui}, these coordinates are related along the intersection $U_\sigma\cap U_{\sigma'}$ by:
\[
    \chi^{\mathbf{u}_i} = \chi^{\mathbf{u}_i' - a_i\mathbf{u}_n'}\qquad\text{and}\qquad \chi^{\mathbf{u}_n} = \chi^{-\mathbf{u}_n'}.
\]
For simplicity, for the remainder of this section we use the following notation for the coordinates of $U_{\sigma'}$:
\begin{align*}
    x_i &:= \chi^{\mathbf{u}_i'},\text{ for all }i=1,\dots,n-1,\\
    y\; &:= \chi^{\mathbf{u}_n'}.
\end{align*}

In this notation, the coordinates of $U_\sigma$ are $x_1y^{-a_1}, \ldots, x_{n-1}y^{-a_{n-1}}$ and $y^{-1}$. The transition function of the tangent bundle $T_X$ from $U_{\sigma'}$ to $U_\sigma$ is the Jacobian matrix $J=J_{\sigma\leftarrow\sigma^\prime}$ of this coordinate change, that is,
\[
    J =
    \begin{pmatrix}
        \dfrac{\partial \chi^{\mathbf{u}_i}}{\partial \chi^{\mathbf{u}_j'}}
    \end{pmatrix}_{ij},
\]
and a direct computation shows that
\begin{equation}\label{sec: tangent equation transition tangent}
J = \begin{bmatrix} y^{-a_1} & 0 & \cdots & 0 & -a_1 x_1 y^{-a_1 - 1} \\
                    0 & y^{-a_2} & \cdots & 0 & -a_2 x_2 y^{-a_2 - 1} \\
                    \vdots & ~ & \ddots & \vdots & \vdots \\
                    0 & \cdots & 0 & y^{-a_{n-1}} & -a_{n-1} x_{n-1} y^{-a_{n-1} - 1} \\
                    0 & \cdots & 0 & 0 & -y^{-2}
    \end{bmatrix}.
\end{equation}

In terms of these coordinates the invariant curve $C_\tau$ is defined by the equations $x_1=\cdots=x_{n-1}=0$.

\begin{corollary}
The restriction $T_X|_{C_\tau}$ of the tangent bundle to the invariant curve $C_\tau$ splits into the following direct sum of line bundles
\[
    T_X\vert_{C_\tau}=\O_{\PP^1}(a_1)\oplus\cdots\oplus\O_{\PP^1}(a_{n-1})\oplus\O_{\PP^1}(2).
\]
\end{corollary}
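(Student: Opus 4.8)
The plan is to read off the splitting type directly from the transition function $J$ in Equation~\eqref{sec: tangent equation transition tangent} by restricting to the invariant curve $C_\tau$. First I would recall that $C_\tau \cong \PP^1$ is covered by the two affine charts $U_\sigma \cap C_\tau$ and $U_{\sigma'} \cap C_\tau$, which in the coordinates above are the loci $x_1 = \cdots = x_{n-1} = 0$ inside $U_\sigma$ and $U_{\sigma'}$ respectively; the remaining coordinate $y$ (resp. $y^{-1}$) serves as the affine coordinate on each chart, so the gluing $y \leftrightarrow y^{-1}$ is precisely the standard atlas of $\PP^1$. A toric vector bundle restricted to $\PP^1$ is determined by its transition matrix between these two charts, and by Grothendieck's theorem it splits as a sum of line bundles; the splitting type is recovered by bringing the transition matrix into diagonal form via change of frame over each chart.

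The key computation is to substitute $x_1 = \cdots = x_{n-1} = 0$ into $J$. Doing so kills every off-diagonal entry $-a_i x_i y^{-a_i - 1}$, leaving
\[
    J\vert_{C_\tau} = \operatorname{diag}\bigl(y^{-a_1},\, y^{-a_2},\, \ldots,\, y^{-a_{n-1}},\, -y^{-2}\bigr).
\]
Now I would invoke the standard dictionary between transition functions on $\PP^1$ and line bundles: a rank-one transition function equal to $y^{-a}$ (mapping the trivialization over the $y$-chart to that over the $y^{-1}$-chart) is the cocycle defining $\O_{\PP^1}(a)$, with the sign on the last entry absorbed into a constant (hence invertible) change of frame. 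Since $J\vert_{C_\tau}$ is already diagonal, the restricted bundle splits as the direct sum of the corresponding line bundles, giving
\[
    T_X\vert_{C_\tau} = \O_{\PP^1}(a_1)\oplus\cdots\oplus\O_{\PP^1}(a_{n-1})\oplus\O_{\PP^1}(2),
\]
as claimed. (The last summand is $\O_{\PP^1}(2) = T_{\PP^1}$, consistent with $C_\tau$ being a smooth rational curve whose normal directions are transverse to the fibration-like structure encoded by the $x_i$.)

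The only point requiring a little care — and the mild obstacle here — is pinning down the correct conventions: which direction the transition function $J = J_{\sigma \leftarrow \sigma'}$ goes, which coordinate ($y$ or $y^{-1}$) plays the role of the affine parameter, and hence whether the exponent $-a_i$ yields $\O(a_i)$ or $\O(-a_i)$. The bookkeeping is fixed by the relations in Equation~\eqref{sec: tangent relation between ui} and the definition of $J$ as the Jacobian $(\partial \chi^{\mathbf{u}_i}/\partial \chi^{\mathbf{u}_j'})$: the entry $y^{-a_i}$ is the factor by which the $i$-th basis covector transforms passing from the $U_{\sigma'}$-frame to the $U_\sigma$-frame, and under the identification of $C_\tau$ with $\PP^1$ this is exactly the clutching function of $\O_{\PP^1}(a_i)$. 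Everything else is a routine diagonal read-off.
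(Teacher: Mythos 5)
Your proposal is correct and follows the same route the paper intends: the corollary is stated as an immediate consequence of the explicit Jacobian transition matrix, obtained by setting $x_1=\cdots=x_{n-1}=0$ so that $J\vert_{C_\tau}$ becomes the diagonal matrix $\operatorname{diag}(y^{-a_1},\ldots,y^{-a_{n-1}},-y^{-2})$ and the splitting type is read off entrywise. Your convention check (the last entry $-y^{-2}$ giving $\O_{\PP^1}(2)=T_{\PP^1}$, which pins down $y^{-a_i}\leftrightarrow\O_{\PP^1}(a_i)$) is exactly the right sanity check and fills in the only detail the paper leaves implicit.
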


Similarly, the transition function $J^{\dag}$ of the cotangent bundle of $X$ is the inverse transpose of $J$. A direct computation shows that
\begin{equation}\label{sec: tangent equation transition COtangent}
    J^{\dag} := (J^\text{\sffamily T})^{-1} =
    \begin{bmatrix}
    y^{a_1} & 0           & \cdots & 0 & 0\\
    0       & y^{a_2}     & \cdots & 0 & 0\\
    \vdots       & \vdots & \ddots  & \vdots & \vdots \\                 
    0       &   0          & \cdots & y^{a_{n-1}} & 0 \\
    -a_1x_1y & -a_2x_2y& \cdots & -a_{n-1}x_{n-1}y & -y^{2}
    \end{bmatrix}.
\end{equation}

\subsection{Case~\ref{i:overall-case3}: using the transition functions}

Let $X=X_\Sigma$ be a smooth projective $n$-dimensional toric variety. Consider two adjacent maximal cones $\sigma,\sigma'\in\Sigma$, and let $x_1,\dots,x_{n-1},y$ be the coordinates of the affine toric subvariety $U_{\sigma'}\subseteq X$, as defined in Subsection~\ref{subsec: transition functions and coordinates}. In this subsection we translate the commutativity of the diagram~\eqref{eq:dagger3} in Proposition~\ref{prop:transition function method} into a linear system of polynomials. We then use this framework to prove Theorem~\ref{thm:mainTC} for the tangent bundle.

An $n$-tuple of nonnegative integers $\lambda=(\lambda_1,\dots,\lambda_n)$ is said to be a \emph{composition} of a positive integer $d$ of length $n$ if $\lambda_1+\cdots+\lambda_n=d$, this is written as $\lambda\vDash d$ when there is no ambiguity in the number of parts. Given a composition $\lambda=(\lambda_1,\dots,\lambda_n)$ of $d$, define $\lambda^\circ$ to be the $(n-1)$-tuple obtained by removing its last coordinate, that is, $\lambda^\circ = (\lambda_1,\dots,\lambda_{n-1})$. 

Given a set of variables $z_1,\dots,z_n$ and an $n$-tuple of integers $\lambda$, define
\[
    \mathbf{z}^\lambda = z_1^{\lambda_1}z_2^{\lambda_2}\cdots z_n^{\lambda_n}
\]
and similarly
\[
    \mathbf{z}^{\lambda^\circ} = z_1^{\lambda_1}z_2^{\lambda_2}\cdots z_{n-1}^{\lambda_{n-1}}.
\]
 The set of all compositions of $d$ of length $n$ admits a poset structure defined via $\mu\leq\lambda$ if $\mu_i\leq\lambda_i$ for all $i=1,\dots,n-1$. Note that if $\mu\leq\lambda$, then $\mu_n\geq\lambda_n$.

\begin{lemma}[Compatibility conditions for tangent bundle]\label{lem: tangent compatibility condition tangent bundle}
    Let $J=J_{\sigma\leftarrow\sigma'}$ be the transition matrix of $T_X$ from $U_{\sigma'}$ to $U_{\sigma}$. Then, the compatibility conditions in Proposition~\ref{prop:transition function method} are equivalent to the following equalities:
    \begin{align}
        \tag{$\textnormal{C}_j$} g_j &= y^{da_j} (\Sym^d J)(f_j) \text{ for all } j = 1, 2, \ldots, n-1, \label{eqn:comp-Cj} \\
        \tag{$\textnormal{C}_n$} g_n &= -y^{2d} (\Sym^d J)(f_n) - \sum_{i=1}^{n-1} a_i x_i^d y^{d} (\Sym^d J)(f_i). \label{eqn:comp-Cn}
    \end{align}
    Moreover, for any $f(z_1,\dots,z_n)\in\mathcal{O}_X(U_{\sigma'})[z_1,\dots,z_n]$ of degree $d$ we have that
    \begin{align}\label{eqn:zlambda-of-symd}
    (\Sym^d J) (f) = \sum_{\lambda\vDash d}\mathbf{z}^{\lambda}\sum_{\mu\leq\lambda}C_{\mu,\lambda}m_{\mu,\lambda}\left[\mathbf{z}^\mu\right](f),
    \end{align}
    where
    \begin{align}\label{eqn:monomial-formula}
    \begin{aligned}
        m_{\mu, \lambda} &= \mathbf{x}^{\lambda^\circ - \mu^\circ} y^{-\mathbf{a} \cdot \lambda^\circ - \lambda_n - \mu_n},  \\
        C_{\mu, \lambda} &= (-1)^{\mu_n} \binom{\mu _n}{\lambda_1 - \mu_1, \lambda_2 - \mu_2, \ldots, \lambda_{n-1} - \mu_{n-1},\lambda_n} \mathbf{a}^{\lambda^\circ - \mu^\circ},
    \end{aligned} 
    \end{align}
    and we are using the convention that $0^0=1$.
\end{lemma}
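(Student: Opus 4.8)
The plan is to unwind the abstract compatibility diagram \eqref{eq:compatibility} from Proposition~\ref{prop:transition function method} for the explicit transition matrix $J$ of \eqref{sec: tangent equation transition tangent}, and then compute the operator $\Sym^d J$ directly on monomials. I will treat the two parts of the lemma separately: first deriving the equalities $(\textnormal{C}_j)$ and $(\textnormal{C}_n)$, and then establishing the monomial formula \eqref{eqn:zlambda-of-symd}.

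\textbf{Step 1: The compatibility equalities.} Recall from Remark~\ref{rem:compatibility} that for a matrix $M$, the operator $\Sym^d M$ acts on $f(z_1,\dots,z_n)$ by $(\Sym^d M)(f) = f \circ M^{\textsf{T}}$, i.e., by the substitution $z_k \mapsto \sum_\ell M_{\ell k} z_\ell$. Here $\E = T_X$, the base map is the $d$\textsuperscript{th} Frobenius $\varphi = [d]$ (by the reduction in Proposition~\ref{prop:firbres cor}, so $\B = \O_X$ and $\beta_{i,j}$ is trivial), and the transition function $\varphi^* M_{j,i}$ for $\varphi^* T_X$ is obtained from that of $T_X$ by raising entries to the $d$\textsuperscript{th} power—equivalently, since $\varphi^* M_{j,i}$ acts on a basis vector $\mathbf{e}_k$, the diagram~\eqref{eq:compatibility} reads $\beta_{i,j}\Sym^d M_{j,i} \circ \theta_i = \theta_j \circ \varphi^* M_{j,i}$. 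Writing $\theta_{\sigma'} = (f_1,\dots,f_n)$ and $\theta_\sigma = (g_1,\dots,g_n)$ for the polynomial tuples on $U_{\sigma'}$ and $U_\sigma$, and reading off the $j$\textsuperscript{th} component, this becomes $(\varphi^* J)_{\cdot j}$-weighted combination on one side and $\Sym^d J$ applied to $g_j$ on the other. Since $J$ is upper-triangular with the last column carrying the off-diagonal terms, the $j$\textsuperscript{th} column of $\varphi^* J$ for $j \leq n-1$ is just $y^{-da_j} \mathbf{e}_j$, while the $n$\textsuperscript{th} column is $\bigl(-da_1 x_1 y^{-da_1-1},\dots\bigr)$—wait, more carefully, $\varphi^* J$ replaces each entry $M$ of $J$ by $M^d$, so the last column of $\varphi^*J$ has entries $(-a_i x_i y^{-a_i-1})^d = -a_i^d x_i^d y^{-da_i - d}$ in row $i<n$ and $(-y^{-2})^d$ in row $n$ (here I use $(-1)^d = \pm 1$, which I will absorb; more precisely the signs in $(\textnormal{C}_n)$ come out as stated because the monomial formula already carries a $(-1)^{\mu_n}$ and the ambient powers combine—I should double-check the parity bookkeeping). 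Transposing and rearranging so that $g_j$ is isolated on the left, and multiplying through by the appropriate power of $y$ to clear denominators, yields exactly $(\textnormal{C}_j)$ and $(\textnormal{C}_n)$. The key bookkeeping point is that $\Sym^d J$ is invertible with inverse $\Sym^d(J^{-1})$, so we can always solve for $g_j$; the factor $y^{da_j}$ resp. $-y^{2d}$ is precisely the $(j,j)$-entry of the relevant power of the diagonal part, and the sum $\sum_i a_i x_i^d y^d (\Sym^d J)(f_i)$ collects the contributions of the last column.

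\textbf{Step 2: The monomial formula.} To prove \eqref{eqn:zlambda-of-symd} I apply $\Sym^d J$ to a single monomial $\mathbf{z}^\mu$ with $|\mu| = d$ and expand. By definition $\Sym^d J$ substitutes $z_i \mapsto y^{-a_i} z_i$ for $i \leq n-1$ (the diagonal entry) plus $z_n \mapsto -a_i x_i y^{-a_i-1}$ contributions collected in the last row—actually, transposing: the $i$\textsuperscript{th} variable $z_i$ for $i<n$ maps to $y^{-a_i} z_i$ (since column $i$ of $J$ is $y^{-a_i}\mathbf{e}_i$), while $z_n$ maps to $\sum_{i=1}^{n-1}(-a_i x_i y^{-a_i-1}) z_i + (-y^{-2}) z_n$ (the last column of $J$). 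Therefore $(\Sym^d J)(\mathbf{z}^\mu) = \prod_{i<n}(y^{-a_i} z_i)^{\mu_i} \cdot \bigl(\sum_{i<n}(-a_i x_i y^{-a_i-1}) z_i - y^{-2} z_n\bigr)^{\mu_n}$. Expanding the last factor by the multinomial theorem over exponent vectors $(k_1,\dots,k_{n-1},k_n)$ with $\sum k_i = \mu_n$ gives terms indexed by how many of each $z_i$ we pick; setting $\lambda_i = \mu_i + k_i$ for $i<n$ and $\lambda_n = k_n$, so that $\lambda \geq \mu$ componentwise in the first $n-1$ coordinates and $|\lambda| = d$, the coefficient of $\mathbf{z}^\lambda$ works out to the multinomial coefficient $\binom{\mu_n}{\lambda_1-\mu_1,\dots,\lambda_{n-1}-\mu_{n-1},\lambda_n}$ times $\prod_{i<n}(-a_i x_i y^{-a_i-1})^{\lambda_i - \mu_i} \cdot (-y^{-2})^{\lambda_n}$ times $\prod_{i<n} y^{-a_i\mu_i}$. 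Collecting the $x$-powers gives $\mathbf{x}^{\lambda^\circ - \mu^\circ}$, the $a$-powers give $\mathbf{a}^{\lambda^\circ - \mu^\circ}$, the signs give $(-1)^{\sum(\lambda_i-\mu_i) + \lambda_n} = (-1)^{\mu_n}$ (using $\sum_{i<n}(\lambda_i - \mu_i) + \lambda_n = \mu_n$), and the $y$-exponent is $-\sum_{i<n} a_i \lambda_i - \sum_{i<n}(\lambda_i - \mu_i) \cdot 1 \cdot(\text{from the }{-}1) - 2\lambda_n - a_i\mu_i$ terms—carefully, $-a_i\mu_i - (a_i+1)(\lambda_i-\mu_i) = -a_i\lambda_i - (\lambda_i - \mu_i)$, summed over $i<n$ gives $-\mathbf{a}\cdot\lambda^\circ - (|\lambda^\circ| - |\mu^\circ|)$; adding $-2\lambda_n$ and using $|\lambda^\circ| = d - \lambda_n$, $|\mu^\circ| = d - \mu_n$ gives $-\mathbf{a}\cdot\lambda^\circ - (\mu_n - \lambda_n) - 2\lambda_n = -\mathbf{a}\cdot\lambda^\circ - \lambda_n - \mu_n$, matching $m_{\mu,\lambda}$ exactly. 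Since $f = \sum_\mu [\mathbf{z}^\mu](f)\,\mathbf{z}^\mu$ and $\Sym^d J$ is linear, summing over $\mu$ and swapping the order of summation with $\lambda$ gives \eqref{eqn:zlambda-of-symd}.

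\textbf{Main obstacle.} The conceptual content is light; the real work—and the main risk—is the sign and $y$-exponent bookkeeping in Step 2, compounded in Step 1 by the interaction of the $(-1)^d$ factors coming from $\varphi^* J$ with the $(-1)^{\mu_n}$ already present in $C_{\mu,\lambda}$. I would organize the computation so that all signs are tracked through a single multinomial expansion and verified against a small case (say $n=2$, $d=2$) before writing the general formula, and I would state explicitly the conventions $0^0 = 1$ (needed when some $a_i = 0$ or $\lambda_i = \mu_i$) and the poset convention $\mu \leq \lambda$ meaning $\mu_i \leq \lambda_i$ for $i \leq n-1$ only. Everything else is a direct application of the multinomial theorem and the definition of $\Sym^d$ of a matrix from Remark~\ref{rem:compatibility}.
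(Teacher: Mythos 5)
Your Step 2 is correct and is essentially the paper's argument: expand $(\Sym^d J)(\mathbf{z}^\mu)=\prod_{i<n}(y^{-a_i}z_i)^{\mu_i}\bigl(-\sum_{i<n}a_ix_iy^{-a_i-1}z_i-y^{-2}z_n\bigr)^{\mu_n}$ by the multinomial theorem and read off the coefficient of $\mathbf{z}^\lambda$; your sign count $(-1)^{\mu_n}$ and $y$-exponent $-\mathbf{a}\cdot\lambda^\circ-\lambda_n-\mu_n$ check out.

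Step 1, however, contains a genuine error in the identification of $\varphi^*J$. The transition matrix of $\varphi^*T_X$ is the pullback of $J$ along the Frobenius $\varphi=[d]$, i.e.\ the matrix obtained by the coordinate substitution $x_i\mapsto x_i^d$, $y\mapsto y^d$ in each entry of $J$ --- it is \emph{not} the matrix whose entries are the $d$\textsuperscript{th} powers of the entries of $J$. The two recipes agree on the diagonal entries $y^{-a_j}$, but on the last column they differ: the pullback gives $-a_i\,x_i^d\,y^{-da_i-d}$ and $-y^{-2d}$, whereas your recipe gives $(-a_ix_iy^{-a_i-1})^d=(-1)^d a_i^d\,x_i^d\,y^{-da_i-d}$ and $(-1)^d y^{-2d}$ (note also that your written equality $(-a_ix_iy^{-a_i-1})^d=-a_i^dx_i^dy^{-da_i-d}$ is internally inconsistent about the sign). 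Expanding the row-vector identity $(g_1,\dots,g_n)\,\varphi^*J=\bigl(\Sym^dJ(f_1),\dots,\Sym^dJ(f_n)\bigr)$ with your version of $\varphi^*J$ would produce compatibility conditions with coefficients $a_i^d$ and a parity-dependent sign in place of the stated $a_i$ and $-1$ in \eqref{eqn:comp-Cn}, so you would be proving a different (and wrong) formula. Your proposed fix --- absorbing the $(-1)^d$ into the $(-1)^{\mu_n}$ of the monomial formula --- cannot work, because that sign sits inside $\Sym^dJ(f_i)$, which appears identically in both versions; the discrepancy is in the matrix multiplying the $g$'s, not in the operator applied to the $f$'s. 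With the correct $\varphi^*J$, the $j$\textsuperscript{th} column ($j<n$) is $y^{-da_j}\mathbf{e}_j$, giving \eqref{eqn:comp-Cj} immediately, and the $n$\textsuperscript{th} column gives $-y^{-2d}g_n-\sum_i a_ix_i^dy^{-da_i-d}g_i=\Sym^dJ(f_n)$, which after substituting \eqref{eqn:comp-Cj} yields \eqref{eqn:comp-Cn} with no parity issues at all. Two smaller slips: the operator $\Sym^dJ$ is applied to the $f_j$ (the data on $U_{\sigma'}$), not to $g_j$ as you wrote when reading off components; and solving for the $g_j$ uses invertibility of the matrix $\varphi^*J$ (triangular with unit entries on the overlap), not invertibility of $\Sym^dJ$.
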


Before proceeding to the proof of this result we present an auxiliary lemma:

\begin{lemma}\label{lem: auxiliary coefficients tangent}
    Let $\mu,\lambda$ be two compositions of $d$ of length $n$. Then, 
    \begin{align} \left[\mathbf{z}^\lambda\right]\left((\Sym^d J)(\mathbf{z}^\mu)\right) = \left\{
    \begin{aligned}
        &C_{\mu,\lambda}\,m_{\mu, \lambda},\text{ if }\mu\leq\lambda;\\
        &0\text{ otherwise.} 
    \end{aligned} \right.
    \end{align}
    for $C_{\mu,\lambda}$ and $m_{\mu, \lambda}$ as in Equation~\eqref{eqn:monomial-formula}.
\end{lemma}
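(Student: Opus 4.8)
The statement is a purely computational identity: we must apply the operator $\Sym^d J$ to a single monomial $\mathbf{z}^\mu$ and extract the coefficient of $\mathbf{z}^\lambda$. The plan is to unwind the definition of $\Sym^d J$ from Remark~\ref{rem:compatibility}, namely $(\Sym^d J)(f) = f \circ J^{\textsf T}$, so that the variable $z_k$ is replaced by the $k$-th component of $J^{\textsf T}\mathbf{z}$. Reading off the columns of $J$ from Equation~\eqref{sec: tangent equation transition tangent}, the substitution is $z_j \mapsto y^{-a_j} z_j$ for $j = 1,\dots,n-1$ and $z_n \mapsto -y^{-2} z_n - \sum_{i=1}^{n-1} a_i x_i y^{-a_i-1} z_i$. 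First I would substitute these into $\mathbf{z}^\mu = z_1^{\mu_1}\cdots z_n^{\mu_n}$, so that the first $n-1$ factors contribute the monomial $\prod_{i=1}^{n-1}(y^{-a_i}z_i)^{\mu_i} = \mathbf{x}^0 y^{-\mathbf a\cdot\mu^\circ}\mathbf{z}^{\mu^\circ}$, and the last factor becomes $\big(-y^{-2}z_n - \sum_{i=1}^{n-1} a_i x_i y^{-a_i-1} z_i\big)^{\mu_n}$.

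The second step is to expand the $\mu_n$-th power by the multinomial theorem. A term in this expansion is indexed by a choice of nonnegative exponents $(k_1,\dots,k_{n-1},k_n)$ with $\sum k_i = \mu_n$, where $k_i$ ($i<n$) counts how many times the summand $-a_i x_i y^{-a_i-1} z_i$ is chosen and $k_n$ counts the summand $-y^{-2}z_n$. Such a term contributes a $z$-monomial $z_1^{k_1}\cdots z_{n-1}^{k_{n-1}} z_n^{k_n}$, so combined with the $z^{\mu^\circ}$ from the first $n-1$ factors the total $z$-exponent is $(\mu_1+k_1,\dots,\mu_{n-1}+k_{n-1}, k_n)$. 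For this to equal $\lambda$ we need $k_i = \lambda_i - \mu_i$ for $i=1,\dots,n-1$ and $k_n = \lambda_n$; the constraint $\sum k_i = \mu_n$ then reads $\sum_{i=1}^{n-1}(\lambda_i-\mu_i) + \lambda_n = \mu_n$, which holds exactly when $|\lambda|=|\mu|=d$ together with $\mu_i\le\lambda_i$ for $i<n$, i.e.\ when $\mu\le\lambda$; if some $\lambda_i<\mu_i$ the coefficient is $0$, giving the second case. Assembling the multinomial coefficient $\binom{\mu_n}{\lambda_1-\mu_1,\dots,\lambda_{n-1}-\mu_{n-1},\lambda_n}$, the sign $(-1)^{k_n}\prod_i(-a_i)^{k_i}$ — I would check this collapses to $(-1)^{\mu_n}\mathbf a^{\lambda^\circ-\mu^\circ}$ using $k_n + \sum k_i = \mu_n$ — and collecting the powers of $x$ and $y$ (the $x$-power is $\prod x_i^{\lambda_i-\mu_i}=\mathbf{x}^{\lambda^\circ-\mu^\circ}$, and the $y$-power is $-\mathbf a\cdot\mu^\circ$ from the first factors plus $-2k_n + \sum_i(-a_i-1)k_i = -2\lambda_n - \mathbf a\cdot(\lambda^\circ-\mu^\circ) - (\mu_n-\lambda_n)$ from the expansion, which I would simplify to $-\mathbf a\cdot\lambda^\circ - \lambda_n - \mu_n$) recovers exactly $C_{\mu,\lambda}\,m_{\mu,\lambda}$ as in Equation~\eqref{eqn:monomial-formula}.

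The only real obstacle here is bookkeeping: keeping the many exponent sums straight and verifying that the three separate accumulations of powers of $y$ (one from the diagonal entries acting on $z_1,\dots,z_{n-1}$, one from the $-y^{-2}$ term, one from the off-diagonal $-a_ix_iy^{-a_i-1}$ terms) combine into the single clean exponent $-\mathbf a\cdot\lambda^\circ - \lambda_n - \mu_n$. I would organize this by treating the $x$-exponent, the $y$-exponent, and the scalar coefficient in three short separate sub-computations, each a one-line identity, and the convention $0^0=1$ handles the degenerate case $a_i=0$ or $\lambda_i=\mu_i$ uniformly. Once Lemma~\ref{lem: auxiliary coefficients tangent} is in place, Equation~\eqref{eqn:zlambda-of-symd} of Lemma~\ref{lem: tangent compatibility condition tangent bundle} follows immediately by linearity of $\Sym^d J$, writing $f = \sum_{\mu\vDash d}[\mathbf z^\mu](f)\,\mathbf z^\mu$ and swapping the order of summation.
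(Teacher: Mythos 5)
Your proposal is correct and follows essentially the same route as the paper's proof: apply $(\Sym^d J)(f)=f\circ J^{\textsf T}$ to the monomial $\mathbf{z}^\mu$, expand the $\mu_n$-th power of the last substituted entry by the multinomial theorem, and observe that exactly one multi-index (your $(k_1,\dots,k_n)$, the paper's $\tilde\kappa=(\lambda_1-\mu_1,\dots,\lambda_{n-1}-\mu_{n-1},\lambda_n)$) contributes to $\mathbf{z}^\lambda$, forcing $\mu\le\lambda$ and yielding $C_{\mu,\lambda}m_{\mu,\lambda}$ after collecting the sign, the $x$-exponents, and the $y$-exponent $-\mathbf{a}\cdot\lambda^\circ-\lambda_n-\mu_n$. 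Your exponent and sign bookkeeping checks out against Equation~\eqref{eqn:monomial-formula}, so there is nothing to add.
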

\begin{proof}
Using Remark~\ref{rem:compatibility}, we have that
\[
    (\Sym^d J)(\mathbf{z}^\mu) = \left(y^{-a_1}z_1\right)^{\mu_1}\cdots\left(y^{-a_{n-1}}z_{n-1}\right)^{\mu_{n-1}}
    \left(
        -\sum_{i=1}^{n-1} a_ix_iy^{-a_i-1}z_i - y^{-2}z_n
    \right)^{\mu_n}.
\]
If $\mu\not\leq\lambda$, then $\mu_i>\lambda_i$ for some $1\leq i\leq n-1$ and it becomes impossible for the monomial $\mathbf{z}^{\lambda}$ to appear in the expansion of $(\Sym^d J)(\mathbf{z}^\mu)$. Let us then suppose that $\mu\leq\lambda$ and expand the last term in the previous formula: 
\[
    \left(
        -\sum_{i=1}^{n-1} a_ix_iy^{-a_i-1}z_i - y^{-2}z_n
    \right)^{\mu_n} 
    =
    (-1)^{\mu_n}\sum_{\kappa\vDash\mu_n}\binom{\mu_n}{\kappa}y^{-2\kappa_n}\prod_{i=1}^{n-1}(a_ix_iy^{-a_i-1})^{\kappa_i}
    \mathbf{z}^\kappa.
\]
In particular, the coefficient of the monomial $\mathbf{z}^\lambda$ in the previous sum comes from the unique composition $\tilde\kappa\vDash\mu_n$ such that $\mu^\circ+\tilde\kappa^\circ = \lambda^\circ$. This is given by
\[
    \tilde\kappa = (\lambda_1-\mu_1,\dots,\lambda_{n-1}-\mu_{n-1},\lambda_n),
\]
which is indeed a composition of $\mu_n$. It follows that 
\[             \left[\mathbf{z}^\lambda\right]\left((\Sym^d J)(\mathbf{z}^\mu)\right)
    =C_{\mu, \lambda}
    \left(y^{-a_1}\right)^{\mu_1}\cdots\left(y^{-a_{n-1}}\right)^{\mu_{n-1}}
    y^{-2\lambda_n}\prod_{i=1}^{n-1}(x_iy^{-a_i-1})^{\lambda_i-\mu_i},
\]
where $C_{\mu, \lambda}$ is as in the statement of the lemma. Similarly, the exponent of $x_i$ in this product is $\lambda_i-\mu_i$ for all $i=1,\dots,n-1$. On the other hand, using the fact that both $\mu$ and $\lambda$ are compositions of $d$ one obtains that the exponent of $y$ in this product is $-\sum_{i=1}^{n-1}a_i\lambda_i - \lambda_n - \mu_n$. The result follows.
\end{proof}

\begin{remark}
    The coefficient $C_{\mu,\lambda}$ is zero if and only if there is some $i=1,\dots,n-1$ such that $a_i=0$ and $\lambda_i-\mu_i> 0$. Indeed, this is because in this case $\mathbf{a}^{\lambda^\circ-\mu^\circ} =0$.
\end{remark}

\begin{proof}[Proof of Lemma~\ref{lem: tangent compatibility condition tangent bundle}]
    The compatibility conditions are obtained by simply expanding the equality 
    \[
        \begin{pmatrix} g_1 & \cdots & g_n \end{pmatrix} \varphi^*J^ = \begin{pmatrix}\Sym^d (J)(f_1) & \cdots & \Sym^d (J)(f_n)\end{pmatrix},
    \]
    where $\varphi^*J$ is the matrix obtained from $J$ by replacing $x_j$ with $x_j^d$ for all $j=1,\dots,n-1$, and $y$ with $y^d$. Indeed, this equatility yields
    \begin{align*}
        (\Sym^d J)(f_i) &= y^{-da_i}g_i \text{ for all } i = 1, 2, \ldots, n-1, \\
        (\Sym^d J)(f_n) &= -y^{-2d} g_n - \sum_{i=1}^{n-1} a_i x_i^d y^{-da_i - d} g_i,
    \end{align*}
    from which Equations~\eqref{eqn:comp-Cj} and \eqref{eqn:comp-Cn} follow.

    The proof of Equation~\eqref{eqn:monomial-formula} follows directly from Lemma~\ref{lem: auxiliary coefficients tangent} and the fact that
    \begin{align*}
        \begin{aligned}
            (\Sym^d J) (f) &=\sum_{\lambda\vDash d}z^{\lambda}\sum_{\mu\leq\lambda}\left[\mathbf{z}^\lambda\right]\left((\Sym^d J)(\mathbf{z}^\mu)\right)\cdot\left[\mathbf{z}^\mu\right](f) \\
            &= \sum_{\lambda\vDash d}z^{\lambda}\sum_{\mu\leq\lambda}C_{\mu\lambda}m_{\mu,\lambda}\left[\mathbf{z}^\mu\right](f)
        \end{aligned}
    \end{align*}
    This concludes the proof.
\end{proof}

We leverage the independence and homogeneity of these monomials often, so we record this as a lemma.

\begin{lemma}
\label{lemma:indep-monomials-tangent}
Fix $\eta$ to be a composition of $d$ with $n$ parts. Apply the coefficient extraction operator $\bigl[ \mathbf{z}^\eta \bigr]$ to the compatibility conditions \eqref{eqn:comp-Cj} and \eqref{eqn:comp-Cn}. We have
\begin{enumerate}
    \item In \eqref{eqn:comp-Cj}, the monomials $y^{da_j} m_{\mu,\eta}$ are all linearly independent and homogeneous of the same total degree in $x_1,x_2, \ldots, x_{n-1}, y$.
    \item In \eqref{eqn:comp-Cn}, the monomials $y^{2d} m_{\mu,\eta}$ and $x_i^d y^d m_{\mu',\eta}$ are homogeneous of the same total degree, and they are dependent if and only if $\eta = \mu' = d\textbf{e}_i$ and $\mu = d\textbf{e}_n$.
\end{enumerate}
\end{lemma}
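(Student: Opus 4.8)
The plan is to unwind the closed form $m_{\mu,\lambda}=\mathbf{x}^{\lambda^\circ-\mu^\circ}\,y^{-\mathbf{a}\cdot\lambda^\circ-\lambda_n-\mu_n}$ from \eqref{eqn:monomial-formula} and compare Laurent monomials in the coordinates $x_1,\dots,x_{n-1},y$ of $U_{\sigma'}$. The first step is to record the basic degree identity: since $\mu$ and $\eta$ are both compositions of $d$, one has $\sum_{k=1}^{n-1}(\eta_k-\mu_k)=(d-\eta_n)-(d-\mu_n)=\mu_n-\eta_n$, so the total degree of $m_{\mu,\eta}$ equals $(\mu_n-\eta_n)-\mathbf{a}\cdot\eta^\circ-\eta_n-\mu_n=-\mathbf{a}\cdot\eta^\circ-2\eta_n$, which is \emph{independent of $\mu$}. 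Multiplying by the respective prefactors, $y^{da_j}m_{\mu,\eta}$ has total degree $da_j-\mathbf{a}\cdot\eta^\circ-2\eta_n$, while $y^{2d}m_{\mu,\eta}$ and $x_i^d y^d m_{\mu',\eta}$ both have total degree $2d-\mathbf{a}\cdot\eta^\circ-2\eta_n$. This already yields the homogeneity assertions in (1) and (2).

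For the independence in (1), observe that the $x$-part of $m_{\mu,\eta}$ is $\mathbf{x}^{\eta^\circ-\mu^\circ}$, so its exponent vector recovers $\mu^\circ$, and hence $\mu_n=d-|\mu^\circ|$, and hence $\mu$ itself. Therefore distinct $\mu\leq\eta$ give distinct Laurent monomials $y^{da_j}m_{\mu,\eta}$, and distinct Laurent monomials are linearly independent.

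For (2) I would check all possible coincidences among the monomials $y^{2d}m_{\mu,\eta}$ (with $\mu\leq\eta$) and $x_i^d y^d m_{\mu',\eta}$ (with $1\leq i\leq n-1$ and $\mu'\leq\eta$). A coincidence within the first family forces $\eta^\circ-\mu^\circ=\eta^\circ-\nu^\circ$, hence $\mu=\nu$ by the argument above. For a coincidence $x_i^d y^d m_{\mu',\eta}=x_j^d y^d m_{\nu',\eta}$, comparing $x$-exponents gives $\eta^\circ-(\mu')^\circ+d\mathbf{e}_i=\eta^\circ-(\nu')^\circ+d\mathbf{e}_j$; if $i=j$ this says $\mu'=\nu'$, and if $i\neq j$ then $(\mu')_i=(\nu')_i+d\geq d$ forces $\mu'=d\mathbf{e}_i$, so $d\mathbf{e}_i=\mu'\leq\eta$ gives $\eta=d\mathbf{e}_i$, and symmetrically $\eta=d\mathbf{e}_j$, which is impossible. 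Finally, a cross coincidence $y^{2d}m_{\mu,\eta}=x_i^d y^d m_{\mu',\eta}$ forces $\eta^\circ-\mu^\circ=\eta^\circ-(\mu')^\circ+d\mathbf{e}_i$, i.e. $(\mu')_i=\mu_i+d\geq d$, so $\mu'=d\mathbf{e}_i$ and $\mu^\circ=0$, i.e. $\mu=d\mathbf{e}_n$; the constraint $\mu'=d\mathbf{e}_i\leq\eta$ then forces $\eta=d\mathbf{e}_i$, and substituting $\mu_n=d$, $(\mu')_n=0$ into the two $y$-exponents shows they agree automatically. Conversely, when $\eta=\mu'=d\mathbf{e}_i$ and $\mu=d\mathbf{e}_n$ one computes directly that both monomials equal $x_i^d y^{d(1-a_i)}$. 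This exhausts the cases and establishes the ``if and only if''.

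I do not anticipate a genuine obstacle here; the only delicate point is the bookkeeping of exponent vectors needed to verify that the cross coincidence in (2) is the \emph{only} one. It is essential to use that the summation ranges in Lemma~\ref{lem: tangent compatibility condition tangent bundle} impose $\mu\leq\eta$ and $\mu'\leq\eta$, since precisely this constraint rules out the would-be coincidences among the $x_i^d y^d m_{\mu',\eta}$ with $i\neq j$.
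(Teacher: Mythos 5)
Your proof is correct and takes essentially the same approach as the paper: compute the total degree from the closed form of $m_{\mu,\eta}$ (noting it is independent of $\mu$), and observe that the exponent of $\mathbf{x}$ recovers $\mu$, so the monomials are pairwise distinct except for the single cross coincidence $y^{2d}m_{d\mathbf{e}_n,\eta}=x_i^dy^dm_{d\mathbf{e}_i,\eta}$ when $\eta=d\mathbf{e}_i$. You are in fact slightly more thorough than the paper's proof, which does not spell out why the constraint $\mu'\leq\eta$ is what rules out coincidences $x_i^dy^dm_{\mu',\eta}=x_j^dy^dm_{\nu',\eta}$ with $i\neq j$.
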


\begin{proof}
One sees homogeneity easily by summing the exponents in \eqref{eqn:monomial-formula}. In the first case, the total degree is $-\mathbf{a} \cdot \eta^\circ - \eta_n - d + da_j$ and in the second case the total degree is $-\mathbf{a} \cdot \eta^\circ - \eta_n + d$. To get linear independence in the first case, we simply observe that the monomials $m_{\mu, \eta}$ are all distinct. In the second case, the same observation holds, except it is possible that $y^{2d} m_{\mu, \eta} = x_i^d y^d m_{\mu', \eta}$. This only holds if $m_{\mu,\eta} = x_i^d$, and if $m_{\mu',\eta} = y^d$, which can both happen if $\eta = \mu' = d\textbf{e}_i$ and $\mu = d\textbf{e}_n$.
\end{proof}

For what follows, it is helpful to distinguish which $a_i$ in the wall relations are specifically positive. Without loss of generality, we order them as follows:
\begin{align}\label{eqn:ai-assumptions}
    \text{$a_1 \leq a_2 \leq\ldots\leq a_m \leq 0$ for some $0 \leq m < n$}, && \text{and} && 0< a_{m+1}\leq a_{m+2}\leq \ldots\leq a_{n-1}.
\end{align}

Finally, our proof leverages the following incarnation of B\'ezout's theorem:

\begin{observation}
\label{obs:Bezout}
    Let $f:\PP^{n-1}\to\PP^{n-1}$ be a nontrivial regular map with components $f=[f_1:\dots:f_n]$. Then, for any $0 \leq m < n$, it is impossible for $n-m$ of the components of $f$ to vanish simultaneously along any $m$-dimensional linear subspace of $\PP^{n-1}$.
\end{observation}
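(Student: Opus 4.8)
The plan is to argue by contradiction using nothing more than the projective dimension theorem. Suppose that some $m$-dimensional linear subspace $L\subseteq\PP^{n-1}$ has the property that $n-m$ of the components of $f$ vanish identically on $L$; after relabelling we may assume these are $f_1,\dots,f_{n-m}$. Since $f$ is nontrivial it is non-constant, so the $f_i$ are homogeneous of a common positive degree, and in particular each restriction $f_i|_L$ is either identically zero or homogeneous of positive degree on $L\cong\PP^m$.

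Next I would restrict the remaining $m$ components $f_{n-m+1},\dots,f_n$ to $L$ and observe that they must have a common zero there. Indeed, at most $m$ hypersurfaces in $\PP^m$ always meet in a nonempty set: iterating the projective dimension theorem (see, e.g., \cite[Theorem~I.7.2]{hartshorne}) shows their common zero locus has dimension at least $m-m=0$, and any component $f_i|_L$ that vanishes identically only enlarges this locus. Pick a point $p\in L$ in this common zero locus. Then $f_{n-m+1}(p)=\dots=f_n(p)=0$, while $f_1(p)=\dots=f_{n-m}(p)=0$ because $p\in L$. Hence all $n$ components of $f$ vanish at $p$, contradicting the fact that $f$, being a morphism $\PP^{n-1}\to\PP^{n-1}$, has components with no common zero. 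The same argument covers the extreme case $m=0$, where $L$ is a single point at which all $n$ components would have to vanish.

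I do not expect any genuine obstacle here; the argument should occupy only a few lines. The two points that merit a word of care are that some of the remaining restrictions $f_i|_L$ may themselves vanish identically on $L$ --- harmless, since this only makes the common zero locus larger --- and the precise meaning of ``nontrivial'', which I take to be that $f$ is non-constant (equivalently surjective), so that $\deg f\geq 1$ and the restricted components are honestly homogeneous of positive degree, allowing the projective dimension theorem to apply.
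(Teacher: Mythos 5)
Your proof is correct and follows the same route the paper has in mind: the paper states this as an observation appealing to B\'ezout, and its surrounding discussion is exactly your argument that the remaining $m$ components, restricted to the $m$-plane $L\cong\PP^m$, must have a common zero by the projective dimension theorem, which together with the $n-m$ identically vanishing components contradicts the fact that the components of a morphism $\PP^{n-1}\to\PP^{n-1}$ have no common zero. Your attention to the degenerate cases (identically vanishing restrictions, $m=0$, and the role of ``nontrivial'' in guaranteeing positive degree) is consistent with how the result is used in the paper.
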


In our situation, we show that each of the $n-m$ polynomials $f_{m+1}, \ldots, f_n$ vanish on the $m$-plane defined by $z_{m+1} = z_{m+2} = \cdots = z_{n-1} = 0$ over the fibre of the torus-fixed point on $U_{\sigma'}$. We are left with $m$ polynomials $f_1, f_2 \ldots, f_m$ which, when restricted to this $m$-plane, have a common zero by B\'ezout's Theorem, contradicting the regularity of $f$.

We now have the tools to prove part of Theorem~\ref{thm:mainTC}, saying that the projectivized tangent bundle of a toric variety does not admit a non-automorphic surjective endomorphism. 

\begin{proof}[Proof of Theorem~\ref{thm:mainTC} for $T_X$]
Since we are assuming that $T_X$ is not a product of line bundles, this implies that $X\neq(\mathbb{P}^1)^n$. In particular, there exist maximal cones $\sigma$ and $\sigma'$ in the fan of $X$ such that $a_k \neq 0$ for some index $1 \leq k \leq n-1$. Suppose $\lambda = (\lambda_1, \lambda_2, \ldots, \lambda_m, 0, 0, \ldots, 0, \lambda_n)$ is a composition of $d$ with $n$ parts and fix $0 \leq m < n$. We claim that for each $j = m+1, m+2, \ldots, n-1, n$, the coefficient $\bigl[ \mathbf{z}^\lambda \bigr] f_j$ has constant term zero when viewed as a polynomial in $x_1, x_2, \ldots, x_{n-1}, y$. The theorem follows from this claim, as it implies that each of $f_{m+1}, \ldots, f_n$ vanish on the $m$-plane defined by $z_{m+1} = z_{m+2} = \cdots = z_{n-1} = 0$ over the fibre of the torus fixed point on $U_{\sigma'}$, given by $x_1=\dots=x_{n-1}=y=0$. This contradicts the regularity of $f$ by Observation~\ref{obs:Bezout}.

To prove the claim we show that the constant coefficient of $[\mathbf{z}^\lambda]f_j$ is zero using Lemma~\ref{lemma:indep-monomials-tangent} for four cases:
\begin{multicols}{2}
\begin{enumerate}
    \item[(\namedlabel{i:case1}{1})] $\lambda\neq d\mathbf{e}_n$ and $j=m+1,\dots,n-1$. 
    \item[(\namedlabel{i:case2}{2})] $\lambda=d\mathbf{e}_n$ and $j=m+1,\dots,n-1$.
    \item[(\namedlabel{i:case3}{3})] $\lambda\neq d\mathbf{e}_n$ and $j=n$. 
    \item[(\namedlabel{i:case4}{4})] $\lambda=d\mathbf{e}_n$ and $j=n$.
\end{enumerate}
\end{multicols}
\noindent Set $\eta \coloneqq \lambda + \lambda_n \mathbf{e}_k - \lambda_n \mathbf{e}_n$ where $\mathbf{e}_j$ denotes a standard basis vector. By construction, $\eta$ is also a composition of $d$ and we have $\lambda \leq \eta$. 

\underline{Case \ref{i:case1}}: Apply the coefficient extraction operator $\bigl[ \mathbf{z}^\eta \bigr]$ to the $j$th condition in \eqref{eqn:comp-Cj}. By \eqref{eqn:zlambda-of-symd}, this is
\begin{align*}
    \bigl[ \mathbf{z}^\eta \bigr] g_j = y^{d a_j} \bigl[ \mathbf{z}^\eta \bigr] (\Sym^d J)(f_j) = \sum_{\mu \leq \eta} C_{\mu,\eta} y^{d a_j} m_{\mu,\eta} \bigl[ \mathbf{z}^\mu \bigr] f_j,
\end{align*}
where $g_j\in\CC[U_\sigma][z_1,\dots,z_n]$ and $f_j\in\CC[U_{\sigma'}][z_1,\dots,z_n]$ for all $j=1,\dots,n$. Let us now consider the constant term of $\bigl[ \mathbf{z}^\lambda \bigr] f_j$, which corresponds to the term $C_{\lambda,\eta} y^{d a_j} m_{\lambda,\eta}$. We claim that $C_{\lambda, \eta} y^{d a_j} m_{\lambda, \eta} \not\in \CC[U_\sigma]$ if $\lambda \neq d \mathbf{e}_n$. In this case Lemma~\eqref{lem: tangent compatibility condition tangent bundle} gives us that $C_{\lambda, \eta} = (-a_k)^{\lambda_n} \neq 0$ and we have
\begin{align*}
y^{da_j} m_{\lambda, \eta} &= \mathbf{x}^{\eta^\circ - \lambda^\circ} y^{-\mathbf{a} \cdot \eta^\circ - \eta_n - \lambda_n + da_j}, && ~ \\
&= x_k^{\lambda_n} y^{-\mathbf{a} \cdot \eta^\circ - \eta_n - \lambda_n + da_j}, && \text{(resolve $\mathbf{x}$)} \\
&= \bigl( x_k^{\lambda_n} y^{-a_k \lambda_n} \bigr) y^{-\mathbf{a} \cdot \lambda^\circ - \lambda_n + da_j}, && \text{(substitute $\eta$)} \\
&= \bigl( x_k y^{-a_k} \bigr)^{\lambda_n} y^{-\mathbf{a} \cdot \lambda^\circ - \lambda_n + da_j}. && \text{(rearranging)} 
\end{align*}
This monomial lies in the ring $\CC[U_\sigma] = \CC \bigl[x_1 y^{-a_1}, x_2 y^{-a_2}, \ldots, x_{n-1} y^{-a_1}, y^{-1} \bigr]$ if and only if the exponent $-\mathbf{a} \cdot \lambda^\circ - \lambda_n + da_j \leq 0$. In particular, if $\lambda \neq d \mathbf{e}_n$, then this inequality is never satisfied. Since this monomial is appearing in the expansion of $\bigl[ \mathbf{z}^\eta \bigr] g_j \in \CC[U_\sigma]$, it must have coefficient zero or be canceled by other terms. However, Lemma~\ref{lemma:indep-monomials-tangent}~(1) tells us that the monomials $y^{da_j} m_{\lambda, \eta}$ are linearly independent and homogeneous. Hence, if $\bigl[ \mathbf{z}^\lambda \bigr] f_j$ had a nonzero constant term, independence would prevent us from canceling $C_{\lambda, \eta} m_{\lambda, \eta} \bigl[ \mathbf{z}^\lambda \bigr] f_j$ via other terms of the same degree, and homogeneity would prevent us from canceling via higher degree terms. We conclude that $\bigl[ \mathbf{z}^\lambda \bigr] f_j$ must have no constant term for all $j=m+1,\dots,n-1$ and $\lambda \neq d \mathbf{e}_n$.

\underline{Case \ref{i:case2}}: Apply the coefficient extraction operator $\bigl[ \mathbf{z}^\eta \bigl]$ to \eqref{eqn:comp-Cn}. We get
\begin{align*}
    \bigl[ \mathbf{z}^\eta \bigl] g_n &= -y^{2d} \bigl[ \mathbf{z}^\eta \bigl] (\Sym^d J)(f_n) - \sum_{i=1}^{n-1} a_i x_i^d y^d \bigl[ \mathbf{z}^\eta \bigl] (\Sym^d J)(f_i), \\
    &= -\sum_{\mu^n \leq \eta} C_{\mu^n, \eta} y^{2d} m_{\mu^n, \eta} \bigl[ \mathbf{z}^{\mu^n} \bigr] f_n - \sum_{i=1}^{n-1} \sum_{\mu^i \leq \eta} C_{\mu^i,\eta} x_i^d y^d m_{\mu^i, \eta} \bigl[ \mathbf{z}^{\mu^i} \bigr] f_i.
\end{align*}
Let $\lambda = d\textbf{e}_n$ and consider the term $\bigl[ \mathbf{z}^\lambda \bigr] f_j$ which appears in the second sum with coefficient $C_{\lambda,\eta} x_j^d y^d m_{\lambda, \eta}$, where $C_{\lambda,\eta} = (-a_k)^{\lambda_n} = (-a_k)^d\neq 0$ from \eqref{eqn:monomial-formula}. Rewriting $x_j^d = (x_j^d y^{-da_j}) y^{da_j}$, we see that
\begin{align*}
    x_j^d y^d m_{\lambda, \eta} &= (x_j^d y^{-da_j}) \mathbf{x}^{\eta^\circ - (d\mathbf{e}_n)^\circ} y^{-\mathbf{a} \cdot \eta^\circ - \eta_n - \lambda_n + d + da_j} \\
    &= (x_j^d y^{-da_j}) (\mathbf{x}^{\eta^\circ} y^{-\mathbf{a} \cdot \eta^\circ}) y^{da_j}.
\end{align*}
This never lies in the ring $\CC[U_\sigma]$ since $da_j > 0$ and hence, by Lemma~\ref{lemma:indep-monomials-tangent}~(2), we must again conclude that $\bigl[ \mathbf{z}^\lambda \bigr] f_j$ has no constant term, which concludes the proof of the claim for $j = m+1,m+2, \ldots, n-1$.

\underline{Case \ref{i:case3}}: Assume $\lambda \neq d \mathbf{e}_n$ and examine the term $\bigl[ \mathbf{z}^\lambda \bigr] f_n$ in \eqref{eqn:comp-Cn}. This appears with coefficient $C_{\lambda, \eta} y^{2d} m_{\lambda, \eta}$ and once again we see that
\begin{align*}
    y^{2d} m_{\lambda, \eta} &= \mathbf{x}^{\eta^\circ - \lambda^\circ} y^{-\mathbf{a} \cdot \eta^\circ - \eta_n - \lambda_n + 2d} \\
    &= (x_k y^{-a_k})^{\lambda_n} y^{-\mathbf{a} \cdot \lambda^\circ - \lambda_n + 2d}
\end{align*}
does not lie in $\CC[U_\sigma]$ because the exponent in $y$ is positive, as $-\mathbf{a}\cdot\eta^\circ\geq 0$ by our choice of $\lambda$, and $d>\lambda_n$. Using Lemma~\ref{lemma:indep-monomials-tangent}~(2), we conclude that $\bigl[ \mathbf{z}^\lambda \bigr] f_n$ has no constant term as long as $\lambda \neq d\mathbf{e}_n$.

\underline{Case \ref{i:case4}}: In this case instead consider $\eta \coloneqq (d-1)\mathbf{e}_n + \mathbf{e}_k$. The coefficient of $\bigl[ \mathbf{z}^\lambda \bigr] f_n$ in \eqref{eqn:comp-Cn} is $C_{\lambda, \eta} y^{2d} m_{\mu, \eta}$, where $C_{\lambda, \eta} = (-1)^{d-1}a_k\neq 0$. In this case, since $\eta \neq d\textbf{e}_i$ for any $i$, we get that $y^{2d} m_{\mu, \eta}$ is linearly independent to all the other terms by Lemma~\ref{lemma:indep-monomials-tangent}~(2), so we just need to check whether it lies in $\CC[U_\sigma]$. Once again we see that
\begin{align*}
    y^{2d} m_{\lambda, \eta} &= \mathbf{x}^{\eta^\circ - \lambda^\circ} y^{-\mathbf{a} \cdot \eta^\circ - \eta_n - \lambda_n + 2d} \\
    &= (x_k y^{-a_k}) y^{-\mathbf{a} \cdot \lambda^\circ - 2\lambda_n + 2d+1},
\end{align*}
which does not lie in $\CC[U_\sigma]$ because the exponent of $y$ is positive, as $-\mathbf{a} \cdot \lambda^\circ=0$ and $-2\lambda_n + 2d+1=1$. We conclude that $\bigl[ \mathbf{z}^\lambda \bigr] f_n$ has no constant term in this case as well, completing the proof.
\end{proof}

\subsection{The cotangent bundle}

In this subsection we give the complete proof of Theorem~\ref{thm:mainTC}, which states that $\PP(\Omega_X)\to X$ does not admit any surjective toric endomorphisms. (By this we mean maybe that the base map is not toric) The strategy of the proof is similar to the one for the projectivized tangent bundle. Let us begin by recalling our running notation. After this we recast the compatibility conditions given in Proposition~\ref{prop:transition function method} in the specific case of the cotangent bundle of $X$.

Recall that the transition matrix from $U_{\sigma'}$ to $U_\sigma$ was seen in Equation~\eqref{sec: tangent equation transition COtangent} to be

\begin{equation}
    J^{\dag} := (J^{T})^{-1} =
    \begin{bmatrix}
    y^{a_1} & 0           & \cdots & 0 & 0\\
    0       & y^{a_2}     & \cdots & 0 & 0\\
    \vdots       & \vdots & \ddots  & \vdots & \vdots \\                 
    0       &   0          & \cdots & y^{a_{n-1}} & 0 \\
    -a_1x_1y & -a_2x_2y& \cdots & -a_{n-1}x_{n-1}y & -y^{2}
    \end{bmatrix}.
\end{equation}

In order to determine whether this bundle admits any surjective endomorphisms, we recast the compatibility conditions from Lemma~REF in this context:

\begin{lemma}[Compatibility conditions for cotangent bundle]\label{lem: cotangent compatibility condition}
    Let $J^\dag=J^\dag_{\sigma\leftarrow\sigma'}$ be the transition matrix of $\Omega_X$ from $U_{\sigma'}$ to $U_{\sigma}$. Then, the compatibility conditions in Proposition~\ref{prop:transition function method} are equivalent to the following equalities:
    \begin{align}
        \tag{$\textnormal{D}_j$} g_j&=y^{-da_j}\left(\Sym^d(J^{\dag})(f_j)+a_jx_j^dy^dg_n\right) \text{ for all } j = 1, 2, \ldots, n-1, \label{eqn:comp-Dj} \\
        \tag{$\textnormal{D}_n$} g_n&=-y^{-2d}\Sym^d(J^{\dag})(f_n). \label{eqn:comp-Dn}
    \end{align}
    Moreover, for any homogeneous $f(z_1,\dots,z_n)\in\CC[U_{\sigma'}][z_1,\dots,z_n]$ of degree $d$, we have that
    \begin{align}
    \left[z_{1}^{d-1}z_n\right](\Sym^d(J^{\dag})(f))& =-y^{a_1(d-1)+1}\left(y\left[z_{1}^{d-1}z_n\right](f) + da_{1}x_{1}\left[z_{1}^d\right](f)\right) \label{eq:coefextract1}\\
    \left[z_n^d\right](\Sym^d(J^{\dag})(f))& =(-1)^dy^d\sum_{\lambda\vDash d}\mathbf{a}^{\lambda^\circ}\mathbf{x}^{\lambda^\circ}y^{\lambda_n} [\mathbf{z}^\lambda](f),\label{eq:coefextract2}
    \end{align}
    and we are using the convention that $0^0=1$.
\end{lemma}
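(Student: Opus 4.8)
The plan is to carry out the argument of Lemma~\ref{lem: tangent compatibility condition tangent bundle} and Lemma~\ref{lem: auxiliary coefficients tangent} essentially verbatim, with the matrix $J^\dag$ of~\eqref{sec: tangent equation transition COtangent} playing the role of $J$. For the \textbf{compatibility conditions}: by Proposition~\ref{prop:transition function method} applied to the diagram~\eqref{diagram: vector bundle map general}, together with $\B=\O_X$ and the fact that $\varphi$ is the $d$\textsuperscript{th} Frobenius map (Proposition~\ref{prop:firbres cor}), commutativity of~\eqref{eq:compatibility} for the affine cover $\{U_\sigma\}$ is exactly the matrix identity
\[
    (g_1,\dots,g_n)\cdot(\varphi^*J^\dag)=\bigl(\Sym^d J^\dag(f_1),\dots,\Sym^d J^\dag(f_n)\bigr)
\]
on each overlap $U_\sigma\cap U_{\sigma'}$, where $g_j\in\CC[U_\sigma][z_1,\dots,z_n]$, $f_j\in\CC[U_{\sigma'}][z_1,\dots,z_n]$, and $\varphi^*J^\dag$ is $J^\dag$ after the substitutions $x_i\mapsto x_i^d$ and $y\mapsto y^d$. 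Since $J^\dag$, hence $\varphi^*J^\dag$, is diagonal outside its last row, I would first read off the $n$-th coordinate, which gives $-y^{2d}g_n=\Sym^d J^\dag(f_n)$, i.e.~\eqref{eqn:comp-Dn}, and then the $j$-th coordinate for $j\le n-1$, which reads $y^{da_j}g_j-a_jx_j^dy^dg_n=\Sym^d J^\dag(f_j)$; substituting the expression for $g_n$ yields~\eqref{eqn:comp-Dj}. The converse is the same computation run backwards, so the two conditions are equivalent.

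For the \textbf{coefficient formulas}: by Remark~\ref{rem:compatibility}, $\Sym^d J^\dag$ acts on a homogeneous degree-$d$ polynomial $f$ as precomposition with $(J^\dag)^{\mathsf T}$; transposing~\eqref{sec: tangent equation transition COtangent}, this is the substitution $z_i\mapsto y^{a_i}z_i-a_ix_iy\,z_n$ for $1\le i\le n-1$ and $z_n\mapsto -y^2z_n$. Writing $f=\sum_{\lambda\vDash d}[\mathbf{z}^\lambda](f)\,\mathbf{z}^\lambda$, I would expand each image $\prod_{i\le n-1}(y^{a_i}z_i-a_ix_iyz_n)^{\lambda_i}(-y^2z_n)^{\lambda_n}$ by the multinomial theorem and read off coefficients by deciding, factor by factor, which term to take. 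For $z_n^d$ one is forced to take the $z_n$-term $-a_ix_iy$ from every one of the $n-1$ linear factors, so $\lambda$ contributes $[\mathbf{z}^\lambda](f)\prod_{i\le n-1}(-a_ix_iy)^{\lambda_i}(-y^2)^{\lambda_n}$; using $\sum_i\lambda_i=d$ to collect signs and powers of $y$ and summing over $\lambda$ gives~\eqref{eq:coefextract2}, with the convention $0^0=1$ absorbing indices where $a_i=\lambda_i=0$. For $z_1^{d-1}z_n$ only monomials of $f$ with $\lambda_1\ge d-1$ can contribute: one selects $z_1^{d-1}$ from the first factor and a single $z_n$ from exactly one of the remaining factors, and collecting these with the correct multinomial weights (a factor $d$ when $\lambda=d\mathbf{e}_1$) and the $y$-exponents $a_1(d-1)+1$ and $a_1(d-1)+2$ gives~\eqref{eq:coefextract1}.

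The \textbf{main obstacle} is not conceptual—everything is as in the tangent-bundle case—but lies entirely in the combinatorial bookkeeping of these multinomial expansions: getting every sign and every exponent of $y$ right (all of which hinge on $\sum_i\lambda_i=d$), and, most delicately, handling walls in which some $a_i$ vanish, where the factor $y^{a_i}z_i-a_ix_iyz_n$ collapses to $z_i$ and carries no $z_n$, so that the associated monomials of $f$ drop out of the two extractions in exactly the way encoded by the $0^0=1$ convention. One point that needs care in the $z_1^{d-1}z_n$ extraction is the role of the monomials $z_1^{d-1}z_j$ with $2\le j\le n-1$: for a fully general $f$ their images also feed a term $a_jx_jy^{a_1(d-1)+1}[z_1^{d-1}z_j](f)$ into $[z_1^{d-1}z_n](\Sym^d J^\dag(f))$, so the parenthesized expression in~\eqref{eq:coefextract1} should additionally contain $\sum_{j=2}^{n-1}a_jx_j[z_1^{d-1}z_j](f)$ unless these coefficients of $f$ are already known to vanish in the application.
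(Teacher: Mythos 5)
Your proposal follows the paper's own proof essentially step for step: the paper likewise obtains \eqref{eqn:comp-Dj} and \eqref{eqn:comp-Dn} by expanding $(g_1,\dots,g_n)\cdot\varphi^*J^\dag=\bigl(\Sym^d(J^\dag)(f_1),\dots,\Sym^d(J^\dag)(f_n)\bigr)$ using the shape of $J^\dag$, and it obtains the coefficient formulas from the same substitution $z_i\mapsto y^{a_i}z_i-a_ix_iyz_n$ for $1\le i\le n-1$, $z_n\mapsto -y^2z_n$, followed by a multinomial expansion; your treatment of \eqref{eq:coefextract2} matches the paper's exactly.

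The caveat you raise about \eqref{eq:coefextract1} is not mere bookkeeping caution: it identifies a genuine omission in the lemma as stated and in the paper's proof. The paper asserts that only the compositions $d\mathbf{e}_1$ and $(d-1)\mathbf{e}_1+\mathbf{e}_n$ contribute to $\left[z_1^{d-1}z_n\right]$, but for $n\ge 3$ every composition $(d-1)\mathbf{e}_1+\mathbf{e}_j$ with $2\le j\le n-1$ also contributes, by taking $(y^{a_1}z_1)^{d-1}$ from the first factor and $-a_jx_jyz_n$ from the $j$th; the correct identity is
\[
\left[z_{1}^{d-1}z_n\right]\bigl(\Sym^d(J^{\dag})(f)\bigr)
=-y^{a_1(d-1)+1}\Bigl(y\left[z_{1}^{d-1}z_n\right](f)+da_{1}x_{1}\left[z_{1}^d\right](f)+\sum_{j=2}^{n-1}a_jx_j\left[z_1^{d-1}z_j\right](f)\Bigr),
\]
exactly as you predict; the extra terms vanish only when $n=2$, when $a_j=0$, or when the corresponding coefficients of $f$ happen to vanish, none of which is assumed in the "for any homogeneous $f$" formulation. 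Reassuringly, the downstream use in Case~(2) of the proof of Theorem~\ref{thm:mainTC} for $\Omega_X$ survives the correction: after multiplying by $y^{-da_1}$ (resp.\ by $-a_1x_1^dy^{-da_1-d}$) the new terms contribute only monomials divisible by some $x_j$ with $j\ge 2$, which can neither produce nor cancel the tracked monomial $x_1y^{-a_1+1}$; but the statement of the lemma and its proof should be amended to include the extra sum. (A small additional point: the paper's intermediate computation writes $\binom{d}{2}$ where it should be $\binom{d}{1}=d$; your multiplicity count is the correct one, consistent with the final formula.)
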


Note that in this case we are only interested in two monomials of $(\Sym^d(J^{\dag})(f))$. 

\begin{proof}
    The compatibility conditions are obtained by simply expanding the equality 
    \[
        \begin{pmatrix} g_1 & \cdots & g_n \end{pmatrix} \varphi^*J^\dag = \begin{pmatrix}\Sym^d (J^\dag)(f_1) & \cdots & \Sym^d (J^\dag)(f_n)\end{pmatrix},
    \]
    where $\varphi^*J^\dag$ is the matrix obtained from $J^\dag$ by replacing $x_j$ with $x_j^d$ for all $j=1,\dots,n-1$, and $y$ with $y^d$.
    
    In order to compute the two coefficients we are interested in $(\Sym^d(J^{\dag})(f))$ note that
    
    \begin{align*}
        \Sym^d(J^{\dag})(f) &= f\left( J^{-1}\begin{pmatrix} z1 &\cdots & z_n\end{pmatrix}^T\right) \\
        & = f(y^{a_1}z_1 - a_1x_1y z_n, \dots, y^{a_1{n-1}}z_{n-1} - a_{n-1}x_{n-1}y z_n, -y^2z_n) \\
        & = \sum_{\lambda\vDash d} (y^{a_1}z_1 - a_1x_1y z_n)^{\lambda_1}\cdots (y^{a_{n-1}}z_{n-1} - a_{n-1}x_{n-1}y z_n)^{\lambda_{n-1}} (-y^2z_n)^{\lambda_n} [\mathbf{z}^\lambda](f).
    \end{align*}
    The only terms contributing to the coefficient of $z_{1}^{d-1}$ are those corresponding to the compositions $(d,0,\dots,0)$ and $(d-1,0,\dots,0,1)$. It follows that 
    \begin{align*}
        \left[z_1^{d-1}z_n\right](\Sym^d(J^{\dag})(f)) &= -y^{a_1(d-1)+2}[\mathbf{z}^{(d-1,0,\dots,0,1)}](f) + \binom{d}{2}y^{a_1(d-1)}(-a_{1}x_{1}y)[\mathbf{z}^{(d,0,\dots,0)}](f)\\
        &= -y^{a_1(d-1)+1}\left(y\left[z_{1}^{d-1}z_n\right](f) + da_{1}x_{1}\left[z_{1}^d\right](f)\right).
    \end{align*}
    On the other hand, every composition contributes to the coefficient $\left[z_n^d\right](\Sym^d(J^{\dag})(f))$. More precisely, their contribution arises from the terms in which the $z_n$ term of each binomial is exponentiated fully. It follows that 
    \begin{align*}
        \left[z_n^d\right](\Sym^d(J^{\dag})(f)) &= (-1)^d\sum_{\lambda\vDash d} (a_1x_1y)^{\lambda_1}\cdots (a_{n-1}x_{n-1}y)^{\lambda_{n-1}}(y^2)^{\lambda_n} [\mathbf{z}^\lambda](f) \\
        &= (-1)^dy^d\sum_{\lambda\vDash d} \mathbf{a}^{\lambda^\circ}\mathbf{x}^{\lambda^\circ}y^{\lambda_n} [\mathbf{z}^\lambda](f).
    \end{align*}
    This concludes the proof.
\end{proof}

With this we are ready to prove Theorem~\ref{thm:mainTC}, stating that the projectivized cotangent bundle does not admit any surjective endomorphisms.

For simplicity, we use assumptions \eqref{eqn:ai-assumptions}. That is, we suppose without loss of generality that the coefficients of the wall relation between $\sigma$ and $\sigma'$ are ordered in the following way:
\begin{align*}
    \text{$a_1 \leq a_2 \leq \ldots,\leq a_m \leq 0$ for some $0 \leq m < n$}, && \text{and} && 0< a_{m+1}\leq a_{m+2}\leq \ldots\leq a_{n-1}.
\end{align*}

\begin{proof}[Proof of Theorem~\ref{thm:mainTC} for $\Omega_X$]
    The case where $m=0$ and all $0\leq a_i$ was solved in Subsection~\ref{subsec: Cases 1 and 2}, so let us suppose that $a_1<0$, so that $m\geq 1$.

     We claim that all the coefficients of $f_1$, seen as polynomials in $\CC[U_{\sigma'}]=\CC[x_1,\dots,x_{n-1},y]$, vanish identically on the fibre above the torus-fixed point of $U_{\sigma'}$. This contradicts the regularity of $f$, since on this fibre $f_2,\dots,f_n$ have a common zero by B\'ezout's Theorem. The torus-fixed point is defined affine locally by the equations $x_1=\dots=x_{n-1}=y=0$, so this claim is equivalent to showing that $[\mathbf{z}^\lambda]f_1$ has no constant term for all $\lambda\vDash d$. 
     
    Upon substituting the formula for $g_n$ from the compatibility condition \eqref{eqn:comp-Dn} in the compatibility condition \eqref{eqn:comp-Dj} applied to $j=1$, we obtain that
    \[
        g_1 = y^{-da_1}\left(
            \Sym^d(J^\dag)(f_1) - a_1x_1^dy^{-d}\Sym^d(J^\dag)(f_n)
        \right).
    \]
    In particular, using Equation~\eqref{eq:coefextract2}, the $z_n^d$-coefficients in both sides of this equation satisfy
    \begin{align}\label{eqn: cotangent proof cond D1}
    \begin{aligned}
        \left[z_n^d\right] g_1 &= (-1)^dy^{d-da_1}\left(\sum_{\lambda\vDash d}\mathbf{a}^{\lambda^\circ}\mathbf{x}^{\lambda^\circ}y^{\lambda_n} [\mathbf{z}^\lambda](f_1) - a_1x_1^dy^{-d}\sum_{\lambda\vDash d}\mathbf{a}^{\lambda^\circ}\mathbf{x}^{\lambda^\circ}y^{\lambda_n} [\mathbf{z}^\lambda](f_n)\right)
    \end{aligned}
    \end{align}
    First note that the monomial in Equation~\eqref{eqn: cotangent proof cond D1} corresponding to the constant term of $[\mathbf{z}^\lambda](f_1)$ is $\mathbf{x}^{\lambda^\circ}y^{d-da_1+\lambda_n}$. We claim that this monomial is not an element of $\CC[U_\sigma]$. If it was, then,
    \[
        \mathbf{x}^{\lambda^\circ}y^{d-da_1+\lambda_n}=y^{-\beta}\prod_{i=1}^{n-1}(x_iy^{-a_i})^{\alpha_i}
    \]
    for some $\alpha_1,\dots, \alpha_{n-1},\beta\geq 0$. The $x$-variables force $\alpha_i=\lambda_i$ for all $i=1,\dots,n-1$, but $\beta$ is yet to be determined. Concretely, $\beta$ must satisfy 
    \[
        d-a_{1}d + \lambda_n = -\beta-\sum_{i=1}^{n-1}a_i\lambda_i
    \]
    or, equivalently, 
    \[
        d+\lambda_n-da_{1}\leq -\sum_{i=1}^{n-1}a_i\lambda_i = -\sum_{i=1}^{m}a_i\lambda_i -\sum_{i=m+1}^{n-1}a_i\lambda_i.
    \]
    By hypothesis $a_1<0$ and $a_1\leq a_2\leq \cdots\leq a_m\leq 0$, so the previous inequality is equivalent to
    \[
        d+\lambda_n-da_{1}
        \leq -\sum_{i=1}^{m}a_i\lambda_i -\sum_{i=m+1}^{n-1}a_i\lambda_i 
        \leq -\sum_{i=1}^{m}a_1\lambda_i -\sum_{i=m+1}^{n-1}a_i\lambda_i
        \leq -da_1 -\sum_{i=m+1}^{n-1}a_i\lambda_i.
    \]
    In particular, the last inequality implies that 
    \[
        d+\lambda_n\leq -\sum_{i=m+1}^{n-1}a_i\lambda_i.
    \]
    However, $a_i> 0$ for all $i=m+1,\dots,n-1$, so the system has no solution.

    Therefore, by Equation~\eqref{eqn: cotangent proof cond D1}, the only way in which $[\mathbf{z}^\lambda](f_1)$ could have a constant term is if it is canceled by one of the terms coming from $f_n$. Concretely, this cancellation can occur if and only if there is a nontrivial linear relation of the form
    \begin{equation}\label{eqn: linear relation cotagentn}
        A\mathbf{x}^{\lambda^\circ}y^{\lambda_n}+B
        x_1^d\mathbf{x}^{\mu^{\circ}}y^{\mu_n-d} = 0,
    \end{equation}
    for some $\mu\vDash d$ and a nonzero scalars $A,B\in\CC$. 

    The proof considers two cases:
    \begin{multicols}{2}    
    \begin{enumerate}
    \item[(\namedlabel{i:case1-cot}{1})] $\lambda\neq d\mathbf{e}_1$.
    \item[(\namedlabel{i:case2-cot}{2})] $\lambda=d\mathbf{e}_1$.
    \end{enumerate}
    \end{multicols}

    \underline{Case \ref{i:case1-cot}}: Consider any composition $\lambda\neq d\mathbf{e}_1$. Then, it is impossible for Equation~\eqref{eqn: linear relation cotagentn} to hold, since the the power of $x_1$ in the first term of the left hand side is strictly smaller than $d$. It follows that the constant term of $[\mathbf{z}^\lambda](f_1)$ is zero for all $\lambda\neq d\mathbf{e}_1$. In other words, the only coefficient of $f_1$ that could have a nonzero constant term is $[z_1^d](f_1)$.

    \underline{Case \ref{i:case2-cot}}: Let us suppose that $\lambda= d\mathbf{e}_1$. In order to show that $[z_1^d](f_1)$ has no constant term we consider the $z_1^{d-1}z_n$-coefficient in the compatibility condition \eqref{eqn:comp-Dj} for $j=1$. Then, using Equation~\eqref{eq:coefextract1} one obtains that
    \begin{align*}
        [z_1^{d-1}z_n](g_1) &= y^{-da_1}\left([z_1^{d-1}z_n](\Sym^d(J^{\dag})(f_1))-a_1x_1^dy^{-d}[z_1^{d-1}z_n](\Sym^d(J^{\dag}(f_n))\right) \\
        &= y^{-da_1}\left(-y^{a_1(d-1)+1}\left(y\left[z_{1}^{d-1}z_n\right](f_1) + da_{1}x_{1}\left[z_{1}^d\right](f_1)\right)\right. \\
        &\qquad\qquad\qquad
        \left. +a_1x_1^dy^{da_1-a_1+1-d}\left(y\left[z_{1}^{d-1}z_n\right](f_n) + da_{1}x_{1}\left[z_{1}^d\right](f_n)\right)
        \right)\\
        &= -y^{-a_1+1}\left(y\left[z_{1}^{d-1}z_n\right](f_1) + da_{1}x_{1}\left[z_{1}^d\right](f_1)\right) \\
        &\qquad\qquad\qquad +a_1x_1^dy^{-a_1+1-d}\left(y\left[z_{1}^{d-1}z_n\right](f_n) + da_{1}x_{1}\left[z_{1}^d\right](f_n)\right)
    \end{align*}
    In particular, note that the monomial corresponding to the constant term of $\left[z_{1}^d\right](f_1)$ is $x_{1}y^{-a_1+1}$. Notice that, since $[z_1^{d-1}z_n](g_1)\in\CC[x_1y^{-a_1},\dots,x_{n-1}y^{-a_{n-1}},y^{-1}]$, it is impossible for this monomial to appear as a monomial in $g_1$. Hence, the monomial coming from the constant term of $\left[z_{1}^d\right](f_1)$ must be canceled by a term in the right hand side of the equation. The cancellation could only be done by a monomial coming from one of the remaining three polynomials in the right hand side. These are
    \[
        y^{-a_1+2}\left[z_{1}^{d-1}z_n\right](f_1),\quad 
        x_1^dy^{-a_1+2-d}\left[z_{1}^{d-1}z_n\right](f_n),\quad\text{and}\quad 
        x_{1}^{d+1}y^{-a_1+1-d}\left[z_{1}^d\right](f_n).
    \]
    However, since $f_1,f_n\in\CC[x_1,\dots,x_{n-1},y][\mathbf{z}]$, it is impossible for the monomial $x_{1}y^{-a_1+1}$ to be canceled by any of the three polynomials in the previous display. Indeed, the exponent of $y$ is too large in the first one because $a_1<0$, while the exponent of $x_1$ is too large in the latter two. We then conclude that the constant term of $[z_1^d](f_1)$ is zero.

    Therefore, we have proved that every term of $f_1$ has no constant term when viewed as a polynomial in $\CC[x_1,\dots,x_{n-1},y]$. As explained at the beginning of the proof, this implies that $f_1,\dots,f_n$ have a common zero on the fibre over the torus-invariant point of $U_{\sigma'}$, contradicting the regularity of $f$.
\end{proof}





\appendix
\section{Proof of Theorem~\ref{thm:chernrelations}}
\label{appendix}
In this Appendix we prove Theorem~\ref{thm:chernrelations}. As a reminder for the reader, we are assuming $X$ is a smooth projective variety, $\E$ is a rank $r$ vector bundle on $X$, and $\psi$ is a based map of $\PP(\E)$ with base map $\varphi$ satisfying $\psi^* = d\L + \alpha$, where $\L = \O_{\PP(\E}(1)$ and $\alpha \in \Pic(X)$. We recall the statement of the theorem.

\begin{theorem}[{Theorem~\ref{thm:chernrelations}}]
Suppose that $\psi$ is a based map with base map $\varphi$ and relative degree $d$. If $\varphi^* c_i(\E) = q^i c_i(\E)$ for some integer $q \neq d$ and all $i \in \mathbb{N}$, then for any $k \geq 1$ we have
\begin{equation*}
    c_k(\E) = \frac{\binom{r}{k} c_1(\E)^k}{r^k}.
\end{equation*}
\end{theorem}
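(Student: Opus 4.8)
The plan is to extract the coefficient of $\L^{r-k}$ from the fundamental relation~\eqref{eqn:chernclass-conditions} and use the hypothesis $\varphi^*c_i(\E)=q^ic_i(\E)$ together with the explicit value of $\alpha$ from Corollary~\ref{cor:trans factor} to set up a recursion for the $c_k(\E)$. First I would record, via Lemma~\ref{lem:chern-coefficients} applied to both sides of~\eqref{eqn:chernclass-conditions}, the scalar identity obtained by comparing the $\L^{r-k}$ coefficients: on the left side the coefficient is $\sum_{i=0}^{k}(-1)^i\binom{r-i}{k}d^{r-k}\alpha^{k-i}\varphi^*c_i(\E)$ and on the right it is $(-1)^k d^r c_k(\E)$. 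Substituting $\varphi^*c_i(\E)=q^ic_i(\E)$ and dividing through by $d^{r-k}$ yields
\begin{equation*}
    \sum_{i=0}^{k}(-1)^i\binom{r-i}{k}q^i\alpha^{k-i}c_i(\E) = (-1)^k d^k c_k(\E).
\end{equation*}
Next I would substitute the value $\alpha=\frac{(q-d)c_1(\E)}{r}$ coming from Corollary~\ref{cor:trans factor} (using $\varphi^*c_1(\E)=qc_1(\E)$), so that the entire identity becomes a polynomial relation purely among the classes $c_i(\E)$ and $c_1(\E)^{k-i}$ with rational coefficients depending on $r,k,d,q$.

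The substance of the argument is then an induction on $k$. The base case $k=1$ is the tautology $c_1(\E)=\binom{r}{1}c_1(\E)^1/r^1$. Assuming $c_i(\E)=\binom{r}{i}c_1(\E)^i/r^i$ for all $1\le i<k$, I would plug these into the displayed identity; every term then becomes a rational multiple of $c_1(\E)^k$ except the single unknown term $\binom{r-k}{k}q^k\cdot(\text{coefficient})\cdot c_k(\E)$ — wait, more precisely the $i=k$ term is $(-1)^k\binom{r-k}{k}q^k c_k(\E)$ (here $\binom{r-k}{k}$ should read $\binom{r-k}{k}$; note that the binomial $\binom{r-i}{k}$ at $i=k$ is $\binom{r-k}{k}$, but one must be careful: actually $\binom{r-k}{k}$ need not be the relevant factor — let me instead keep the coefficient symbolic as $B_k$). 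Solving the resulting scalar linear equation for $c_k(\E)$ expresses it as $(\text{some rational number})\cdot c_1(\E)^k$, and the remaining task is the purely combinatorial verification that this rational number equals $\binom{r}{k}/r^k$. This reduces to a binomial identity: after clearing denominators it should say something like
\begin{equation*}
    \sum_{i=0}^{k}(-1)^i\binom{r-i}{k}\binom{r}{i}q^i(q-d)^{k-i}r^{i} \;=\; (-1)^k d^k\binom{r}{k}r^k \cdot \frac{1}{\text{(normalizing factor)}},
\end{equation*}
which I would prove by a generating-function manipulation — recognizing $\sum_i(-1)^i\binom{r-i}{k}\binom{r}{i}t^i$ as a coefficient extraction from $(1+x)^{r}$-type expansions, or by the Vandermonde/Chu identity after reindexing. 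An alternative, possibly cleaner route is to avoid the explicit recursion entirely: note that a vector bundle satisfying $c_k(\E)=\binom{r}{k}c_1(\E)^k/r^k$ for all $k$ is exactly one whose Chern roots are "all equal" to $c_1(\E)/r$ in the Chow ring, equivalently $\E$ has the same Chern polynomial as $\L^{\oplus r}$ with $\L^{\otimes r}=\det\E$; one could try to argue directly that~\eqref{eqn:chernclass-conditions} forces the Chern polynomial of $\E$ to be a perfect $r$-th power.

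The main obstacle I anticipate is the combinatorial identity in the inductive step: keeping track of the $q$-dependence, the factors of $(q-d)$ coming from powers of $\alpha$, and the two binomial coefficients $\binom{r-i}{k}$ and $\binom{r}{i}$ simultaneously is delicate, and one needs the hypothesis $q\ne d$ precisely to guarantee the coefficient $B_k$ of $c_k(\E)$ — after moving the right-hand side over — is nonzero so that the linear equation can be solved. Verifying $B_k\ne 0$ (it should be a nonzero polynomial in $q$ and $d$ that does not vanish when $q\ne d$, coming essentially from the difference between $\binom{r-k}{k}q^k$ on one side and $d^k$ on the other, modulo the lower-order contributions) is the technical crux. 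This is why the theorem's proof is "direct but tedious" and is deferred to the appendix; the structure above — coefficient comparison, substitution of $\alpha$, induction, binomial identity — is the skeleton, and the only real work is pushing the algebra through carefully.
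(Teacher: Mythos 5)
Your proposal follows essentially the same route as the paper's appendix proof: extract the coefficient of the relevant power of $\L$ from \eqref{eqn:chernclass-conditions}, substitute $\alpha=\frac{(q-d)c_1(\E)}{r}$ via Corollary~\ref{cor:trans factor}, induct on $k$, and solve the resulting scalar equation for $c_k(\E)$, with $q\neq d$ guaranteeing solvability; the closing combinatorial identity you leave symbolic is exactly what the paper dispatches using the trinomial revision $\binom{r-i}{r-m}\binom{r}{i}=\binom{r}{m}\binom{m}{i}$ together with the binomial expansion of $\bigl((q-d)-q\bigr)^{k+1}$. One small correction to your bookkeeping: by Lemma~\ref{lem:chern-coefficients} the binomial entering the $\L^{r-k}$ coefficient is $\binom{r-i}{r-k}=\binom{r-i}{k-i}$, not $\binom{r-i}{k}$, so at $i=k$ it equals $1$ and your symbolic coefficient $B_k$ is exactly $(-1)^k(q^k-d^k)$ after moving the right-hand side over and dividing by $d^{r-k}$, which settles the nonvanishing worry directly.
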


The proof strategy is simple: we inductively use Equation~\ref{eqn:chernclass-conditions} which tells us
\begin{equation}
\label{eq:appendix}
    \sum_{i=0}^r (-1)^i (d \L + \alpha)^{r-i} \varphi^* c_i(\E) = d^r \sum_{i=0}^r (-1)^i \L^{r-i} c_i(\E).
\end{equation}
The meat of the argument is verifying a binomial identity that appears from the inductive assumption.
 
\begin{proof}[Proof of Theorem~\ref{thm:chernrelations}]
We induct on $k$. The base case $k = 1$ is clear. For the inductive step, assume that for $1 \leq i \leq k$ we have 
\begin{equation*}
    c_i(\E) = \frac{\binom{r}{i} c_1(\E)}{r^i}.
\end{equation*}
Using Corollary~\ref{cor:trans factor} and the assumption that $\varphi^* c_i(\E) = q^i c_i(\E)$, we have that $\alpha = \frac{(q-d) c_1(\E)}{r}$ and so we may rewrite Equation~\eqref{eq:appendix} as
\begin{equation}
\label{eq:appendix-step1}
\sum_{i=0}^r (-1)^i \left( d \L + \frac{(q-d)c_1(\E)}{r} \right)^{r-i} q^i c_i(\E) = d^r \sum_{i=0}^r (-1)^i \L^{r-i} c_i(\E).
\end{equation}
Consider the coefficient of $\L^{r-(k+1)}$ on both sides of Equation~\ref{eq:appendix-step1}. We obtain the equality
\begin{equation*}
\sum_{i=0}^{k+1} (-1)^i \binom{r-i}{r-(k+1)} d^{r-(k+1)} \frac{(q-d)^{k+1-i}c_1(\E)^{k+1-i}}{r^{k+1-i}} q^i c_i(\E) = (-1)^{k+1}d^r c_{k+1}(\E).
\end{equation*}
Dividing by $d^{r-(k+1)}$ and moving the $i=k+1$ term to the right-hand side gives
\begin{equation*}
\sum_{i=0}^{k} (-1)^i \binom{r-i}{r-(k+1)} \frac{(q-d)^{k+1-i}c_1(\E)^{k+1-i}}{r^{k+1-i}} q^i c_i(\E) = (-1)^{k+1}(d^{k+1} - q^{k+1}) c_{k+1}(\E).
\end{equation*}
Substituting using the inductive hypothesis to get
\begin{equation*}
\sum_{i=0}^{k} (-1)^i \binom{r-i}{r-(k+1)} \binom{r}{i} \frac{(q-d)^{k+1-i} c_1(\E)^{k+1}}{r^{k+1}} q^i = (-1)^{k+1}(d^{k+1} - q^{k+1}) c_{k+1}(\E).
\end{equation*}
Now use the trinomial revision identity $\binom{r-i}{r-m} \binom{r}{i} = \binom{r}{m} \binom{m}{i}$ for $m = k+1$ to rewrite and get
\begin{align*}
\sum_{i=0}^{k} (-1)^i \binom{r}{k+1} \binom{k+1}{i} \frac{(q-d)^{k+1-i} c_1(\E)^{k+1}}{r^{k+1}} q^i &= (-1)^{k+1}(d^{k+1} - q^{k+1}) c_{k+1}(\E) \\
\frac{\binom{r}{k+1} c_1(\E)^{k+1}}{r^{k+1}} \sum_{i=0}^{k} (-1)^i \binom{k+1}{i} q^i (q-d)^{k+1-i} &= (-1)^{k+1}(d^{k+1} - q^{k+1}) c_{k+1}(\E).
\end{align*}
Therefore it remains to show that
\begin{equation}
\label{eq:appendix-identity}
\sum_{i=0}^{k} (-1)^i \binom{k+1}{i} d^{r-(k+1)} q^i (q-d)^{k+1-i} = (-1)^{k+1}(d^{k+1} - q^{k+1}),
\end{equation}
because, since $q \neq d$, we could divide out by $(-1)^{k+1}(d^{k+1} - q^{k+1})$ and conclude the desired equality. Finally, to see that Equation~\ref{eq:appendix-identity} holds, we observe that
\begin{align*}
    (-1)^{k+1} d^{k+1} = \bigl( (q-d) - q \bigr)^{k+1} &= \sum_{i=0}^{k+1} (-1)^i \binom{k+1}{i} (q-d)^{k+1-i} q^i \\
    &= (-1)^{k+1} q^{k+1} + \sum_{i=0}^{k} (-1)^i \binom{k+1}{i} (q-d)^{k+1-i} q^i.
\end{align*}
Rearranging gives the desired equality and establishes the induction.
\end{proof}

\bibliographystyle{plain}
\bibliography{bib}
\end{document}